\def\Ac{\mathcal{A}}
\def\Bc{\mathcal{B}}
\def\Ec{\mathcal{E}}
\def\Fc{\mathcal{F}}
\def\Rc{\mathcal{R}}
\def\Sc{\mathcal{S}}
\def\Uc{\mathcal{U}}
\def\Vc{\mathcal{V}}
\def\Wc{\mathcal{W}}
\def\H{\mathbb{H}}
\def\C{\mathbb{C}}
\def\eps{\varepsilon}
\def\Im{{\rm Im}}
\def\d{\partial}
\newtheorem{theorem}{Theorem}
\newtheorem{proposition}{Proposition}
\newtheorem{lemma}{Lemma}
\newtheorem{corollary}{Corollary}
\theoremstyle{remark}
\newtheorem{example}{Example}
\newtheorem{ax}{Axiom}{\bf}{\rm}
\title{Topological regluing of rational functions}
\author{V. Timorin}
\begin{document}





\maketitle

\def\IMSmarkvadjust{0 pt}
\def\IMSmarkhadjust{0 pt}
\def\IMSmarkhpadding{0 pt}
\def\IMSpubltext{Published in modified form:}
\def\SBIMSMark#1#2#3{
 \font\SBF=cmss10 at 10 true pt
 \font\SBI=cmssi10 at 10 true pt
 \setbox0=\hbox{\SBF \hbox to \IMSmarkhpadding{\relax}
                Stony Brook IMS Preprint \##1}
 \setbox2=\hbox to \wd0{\hfil \SBI #2}
 \setbox4=\hbox to \wd0{\hfil \SBI #3}
 \setbox6=\hbox to \wd0{\hss
             \vbox{\hsize=\wd0 \parskip=0pt \baselineskip=10 true pt
                   \copy0 \break%
                   \copy2 \break%
                   \copy4 \break}}
 \dimen0=\ht6   \advance\dimen0 by \vsize \advance\dimen0 by 8 true pt
                \advance\dimen0 by -\pagetotal
	        \advance\dimen0 by \IMSmarkvadjust
 \dimen2=\hsize \advance\dimen2 by .25 true in
	        \advance\dimen2 by \IMSmarkhadjust

%
%
  \openin2=publishd.tex
  \ifeof2\setbox0=\hbox to 0pt{}
  \else 
     \setbox0=\hbox to 3.1 true in{
                \vbox to \ht6{\hsize=3 true in \parskip=0pt  \noindent  
                {\SBI \IMSpubltext}\hfil\break
                \input publishd.tex 
                \vfill}}
  \fi
  \closein2
  \ht0=0pt \dp0=0pt
 \ht6=0pt \dp6=0pt
 \setbox8=\vbox to \dimen0{\vfill \hbox to \dimen2{\copy0 \hss \copy6}}
 \ht8=0pt \dp8=0pt \wd8=0pt
 \copy8
 \message{*** Stony Brook IMS Preprint #1, #2. #3 ***}
}

\SBIMSMark{2008/4}{September 2008}{}

\begin{abstract}
Regluing is a topological operation that helps to construct topological
models for rational functions on the boundaries of certain hyperbolic components.
It also has a holomorphic interpretation, with the flavor of infinite
dimensional Thurston--Teichm\"uller theory.
We will discuss a topological theory of regluing, and trace a direction
in which a holomorphic theory can develop.
\end{abstract}

\section{Introduction}

\subsection{Overview and main results}
Consider a continuous function $f:S^2\to S^2$.
We are mostly interested in the case, where $f$ is a rational function considered as a self-map of the
Riemann sphere.
The objective is to study the topological dynamics of $f$.
In particular, how to modify the topological dynamics in a controllable way?
There is an operation that does not change the dynamics at all: a conjugation by a homeomorphism.
Let $\Phi:S^2\to S^2$ be a homeomorphism, and consider the map $g=\Phi\circ f\circ\Phi^{-1}$.
Then one can think of $g$ as being ``the map $f$ but in a different coordinate system''.
In particular, all dynamical properties of the two maps are the same.
Another example is a semi-conjugacy.
Let $\Phi:S^2\to S^2$ now be a continuous surjective map, but not a homeomorphism.
Sometimes, the ``conjugation'' $g=\Phi\circ f\circ\Phi^{-1}$ still makes sense, although
$\Phi^{-1}$ does not make sense as a map.
Namely, this happens when $f$ maps fibers of $\Phi$ to fibers of $\Phi$.
In this case, $g$ is well-defined as a continuous map.
Such map is said to be {\em semi-conjugate} to $f$.

If we want to perform a surgery on $f$, then we need to consider discontinuous maps $\Phi$
({\em if you don't cut, that is not a surgery}).
Of course, if we allow badly discontinuous maps $\Phi$, then the ``conjugation''
$g=\Phi\circ f\circ\Phi^{-1}$, even if it makes sense as a map, would have almost
nothing in common with $f$, in particular, it would be hard to say anything about
the dynamics of $g$.
Thus we must confine ourselves with only nice types of discontinuities.
An example is the following: given a simple curve, one can cut along this curve,
and then reglue in a different way.
We need to allow countably many such regluings to obtain interesting examples.
Indeed, suppose we reglued some curve.
Then we spoiled the behavior of the function near the preimages of this curve.
Thus we also need to reglue preimages, second preimages and so on.
This usually accounts to regluing of countably many curves.

An example of a regluing is the following map:
$$
j(z)=\sqrt{z^2-1}.
$$
Note that there are two branches of this map that are
well-defined and holomorphic on the complement to the segment $[-1,1]$.
We choose the branch that is asymptotic to the identity near infinity and call it $j$.
The map $j$ has a continuous extension to each ``side'' of the segment $[-1,1]$
but the limit values at different sides do not match.
By considering the limit values of $j$ at both sides of $[-1,1]$, we can say that
$j$ reglues this segment into the segment $[-i,i]$.
A precise definition of a regluing will be given in Section \ref{s_tr}.
For now, a regluing of a family of curves is a one-to-one map defined and continuous
on the complement to these curves and behaving near each curve as the map $j$ considered above.

Below, we briefly explain the main result.
Since the precise definitions are rather lengthy, we will only give a sketch,
postponing the detailed statements until the main body of the paper.
Let $\Phi$ be a regluing of countably many disjoint simple curves in the sphere
(note that the complement to countably many disjoint simple curves is not necessarily open but
is always dense).
Also, consider a ramified covering $f:S^2\to S^2$.
Under certain simple topological conditions on $f$ and the paths, we can guarantee that
the map $\Phi\circ f\circ\Phi^{-1}$ extends to the whole sphere as a ramified covering.
We say that this covering is obtained from $f$ by {\em topological regluing}.
Topological regluing is useful for constructing topological models of rational functions.

The simplest example is provided by quadratic polynomials.
E.g. consider the quadratic polynomial $f(z)=z^2-6$.
Most points drift to infinity under the iterations of $f$.
The set of points that do not is a Cantor set $J_f$ called the {\em Julia set of $f$}.
This set lies on the real line.
The right-most point of $J_f$ is 3.
Note that 3 is fixed under $f$.
The left-most point of $J_f$ is $-3$, which is mapped to 3.
The biggest component of the complement to $J_f$ in $[-3,3]$ is $(-\sqrt{3},\sqrt{3})$.
The endpoints of this interval are mapped to $-3$.
Suppose that we reglue the segment $[-\sqrt 3,\sqrt 3]$ and all its pullbacks under $f$.
Then the Julia set of $f$ collapses into a connected set.
As a result, we obtain the quadratic polynomial $g(z)=z^2-2$, the so called {\em Tchebyshev polynomial},
whose Julia set is the segment $[-2,2]$.
We will work out this example in detail in Section \ref{s_ex}.

More generally, let $f$ be a quadratic polynomial $z\mapsto z^2+c$, where $c$ is the landing
point of an external parameter ray $\Rc$.
Suppose that the Julia set of $f$ is locally connected, and all periodic points of $f$ in $\C$ are repelling.
Also, consider a quadratic polynomial $g$, for which the corresponding parameter value belongs to $\Rc$.
Thus the Julia set of $g$ is disconnected.
Then $f$ and $g$ can be obtained one from the other by a regluing.
This is explained in Section \ref{s_trqp}.
This does not say anything new about topological dynamics of quadratic polynomials but gives
a nice illustration to the notion of regluing.

However, the main motivation for the notion of regluing was the problem of finding topological
models for quadratic rational functions.
According to a well-known general observation, the dynamical behavior of a rational function
is determined by the behavior of its critical orbits.
A quadratic rational function has two critical points.
Thus, to simplify the problem, one puts restrictions on the dynamics of one critical point, and
leaves the other critical point ``free''.
For example, it makes sense to consider quadratic rational functions with one critical point
periodic of period $k$.
For $k=1$, we obtain quadratic polynomials.
Suppose now that $k>1$, and $f$ is a quadratic rational function with a $k$-periodic critical point
and the other critical point non-periodic.
Recall that $f$ is called a {\em hyperbolic rational function of type B} if
the non-periodic critical point of $f$ lies in the immediate basin of the periodic critical point
(but necessarily not in the same component, see e.g. \cite{Milnor-QuadRat,Rees_components}).
The function $f$ is said to be {\em a hyperbolic rational function of type C} if
the non-periodic critical point of $f$ lies in the full basin of the periodic critical point,
but not in the immediate basin.
This classification of hyperbolic rational functions into types was introduced by M. Rees \cite{Rees_components}.
However, a different terminology was used (types II and III instead of types B and C).
We use the terminology of Milnor \cite{Milnor-QuadRat}, which is more popular and perhaps more
suggestive (B stands for ``Bi-transitive'', and C for ``Capture'').

Fix $k>1$.
The set of hyperbolic rational functions with a $k$-periodic critical point splits into {\em hyperbolic components}.
We say that a hyperbolic component is of type B or C if it consists of hyperbolic rational functions of this type.
There are also type D components, which we will not discuss in this paper.

\begin{theorem}
\label{typeC}
  Let $f$ be a quadratic rational function with a $k$-periodic critical point.
  If $f$ is on the boundary of a type C hyperbolic component but not on the boundary of a type B
  hyperbolic component, then $f=\Phi\circ h\circ\Phi^{-1}$, where $h$ is a critically
  finite hyperbolic rational function, and $\Phi$ is a regluing of a certain countable set of paths.
  Moreover, $h$ can be chosen to be the center of a type C hyperbolic component, whose boundary contains $f$.
\end{theorem}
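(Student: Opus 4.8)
\medskip

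\noindent{\it Proof proposal.}
The plan is to produce $h$ as a center and to realize $f$ by regluing $h$ along an explicit, dynamically defined family of arcs. First I would let $\mathcal{C}$ be a type C hyperbolic component with $f\in\partial\mathcal{C}$, and take $h$ to be its center; as the center of a hyperbolic component of maps with a $k$-periodic critical point, $h$ is critically finite, its first critical point lying in a superattracting $k$-cycle and its second critical point $c_2$ lying in a Fatou component $W$ that is not in the immediate basin of the cycle but is eventually mapped onto it. Since $h$ is critically finite and hyperbolic, its Julia set is locally connected and every Fatou component is a Jordan domain; this is the combinatorial skeleton I would match against $f$.

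Next I would compare the two dynamical planes. Off the grand orbit of $c_2$ the maps $f$ and $h$ should carry identical dynamics: both have the superattracting $k$-cycle, its immediate basin, and the whole grand orbit of that basin, and a homeomorphism between these invariant subsets conjugates $f$ to $h$ there. The discrepancy is confined to the ``capture part'': for $h$ it is the countable union of Fatou components $W,\,h^{-1}(W),\,h^{-2}(W),\dots$ that avoid the immediate basin, each a Jordan domain on which the relevant branch of $h$ is two-to-one exactly along the grand orbit of $c_2$; for $f$ it is the countable family of pinch points and arcs in $J_f$ onto which those components have degenerated. Here the hypothesis that $f$ is \emph{not} on the boundary of a type B component is essential: it guarantees that $W$ and its preimages never touch the immediate basin, so the only way the capture part can degenerate along $\partial\mathcal{C}$ is by this ``clean'' pinching, and in particular $J_f$ stays locally connected. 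Then, in $W$, I would choose a regluing arc $\gamma_W$ through $c_2$ joining two boundary points and invariant under the involution of $W$, and take $\Gamma$ to be $\gamma_W$ together with all its pullbacks under the iterates of $h$: a countable family of pairwise disjoint simple arcs satisfying the invariance conditions of the regluing construction of Section~\ref{s_tr}, the ramified behavior concentrated on the single arc $\gamma_W$.

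With $(h,\Gamma)$ in hand I would invoke the regluing theorem of Section~\ref{s_tr}. The topological hypotheses to be checked are that $\Gamma$ consists of countably many disjoint simple arcs, that $h$ acts on $\Gamma$ in the required covering fashion, and that $\Gamma$ behaves correctly near the ends of its arcs and does not accumulate onto itself pathologically. This last point is the delicate one and is again precisely where excluding type B boundary points pays off: on the boundary of a type B component the orbit of $W$ accumulates on the immediate basin and the arcs of $\Gamma$ clump onto a curve through a critical value, which would destroy simplicity. Granting this, the regluing $\Phi$ of $\Gamma$ makes $\Phi\circ h\circ\Phi^{-1}$ extend to a degree-two ramified covering $g$ of $S^2$, with the superattracting $k$-cycle untouched and $c_2$ pushed onto the reglued part with exactly the eventual behavior it has for $f$.

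It remains to upgrade $g$ to $f$ conformally. By construction $g$ has the same critical-orbit portrait and the same invariant combinatorics along $\partial\mathcal{C}$ as $f$. When $f$ is critically finite, $g$ is a critically finite branched cover; I would check it carries no Thurston obstruction, so that Thurston rigidity yields a M\"obius map $\mu$ with $g=\mu\circ f\circ\mu^{-1}$, and since $\mu^{-1}\circ\Phi$ is again a regluing this gives $f=(\mu^{-1}\Phi)\circ h\circ(\mu^{-1}\Phi)^{-1}$. When $f$ is parabolic or $c_2(f)$ lies on $J_f$ without being preperiodic, $g$ is no longer critically finite, and I would instead use the rigidity available on $\partial\mathcal{C}$ together with a limiting argument from the critically finite case, or the parabolic version of Thurston's theorem, to see that $g$ is M\"obius conjugate to $f$. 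The step I expect to be the main obstacle is this final identification together with the accumulation control above: first proving that $\Gamma$ is tame enough for the regluing theorem to apply --- the geometric heart, and the place where the ``not on the boundary of a type B component'' hypothesis really enters --- and then turning the resulting topological branched cover into an honest conformal conjugate of the given $f$, which needs a genuine rigidity input rather than a soft topological one.
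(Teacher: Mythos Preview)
Your overall architecture is inverted relative to the paper, and this inversion is where the real gap lies. You reglue the \emph{center} $h$ and then try to identify the resulting branched cover $g$ with the \emph{given} boundary map $f$. The paper does the opposite: it constructs the arcs in the dynamical plane of $f$ --- a path $\alpha_{-1}$ from the critical value $f(0)\in\partial V$ to an eventually periodic point $v\in V$, its lift $\alpha_0$ through the free critical point, and all pullbacks --- reglues $f$ along these, and obtains a map $g=\Phi\circ f\circ\Phi^{-1}$ which is \emph{critically finite} by construction (its free critical value is $\Phi(v)$, and $v$ has finite forward orbit). The direction matters decisively for the rigidity step: in the paper's direction one is comparing two critically finite maps $g$ and $h$, so Thurston equivalence (proved by a one-parameter deformation through $H$), then Rees's semi-conjugacy theorem, then a fiber-triviality argument based on Ma\~n\'e's backward stability, give an honest conjugacy $g\cong h$. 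In your direction $g$ is supposed to be conjugate to $f$, which for a generic boundary point is neither critically finite nor parabolic; there is no off-the-shelf rigidity theorem that will hand you $g\cong f$, and your sketch (``limiting argument \dots\ or the parabolic version of Thurston's theorem'') does not supply one. You correctly flag this as the main obstacle, but it is not a detail to be filled in --- it is the heart of the proof, and the paper's choice of direction is precisely what dissolves it.

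There are secondary issues as well. Your proposal does not explain how the choice of $\gamma_W\subset W$ is tied to the \emph{specific} $f\in\partial\mathcal{C}$ you start with; different arcs give different reglued maps, and you need a mechanism that reads off the correct arc from $f$. In the paper this is automatic because the arcs live in the $f$-plane and encode the position of $f(0)$ on $\partial V$. Your use of the ``not on a type B boundary'' hypothesis is also off: in the paper it is what places $f$ in the slice $\Lambda_k=\Vc_k\setminus\overline{\Bc_k}$, over which the B\"ottcher coordinate of $\Omega$ moves holomorphically; this is used to prove that $\partial\Omega$ is locally connected (so $\alpha_{-1}$ exists) and that the free critical point of $f$ is non-recurrent (so the pullback family $\Ac$ is contracted and Ma\~n\'e's theorem applies). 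It is not used to control accumulation of arcs in the $h$-plane, and the paper never claims that $J_f$ is locally connected --- only $\partial\Omega$. Finally, the actual conjugacy $g\cong h$ in the paper requires real work beyond Thurston equivalence: a semi-conjugacy $\phi$ with connected fibers (from Rees's theorem) and then a delicate argument, using backward stability of $g$ on $J_g$ inherited from $f$ via the regluing, that every fiber of $\phi$ is a point. None of these ingredients appear in your outline.
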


This result, combined with the topological models for hyperbolic critically finite functions given in
\cite{Rees_description}, provides topological models for most functions on the boundaries of type C components.
We will prove Theorem \ref{typeC} in Section \ref{s_typeC}.
The requirement that $f$ be not on the boundary of a type B component is probably inessential.
However, to study functions on the boundary of a type B component, one needs to use different techniques.
For $k=2$, there is only one type B component, and a complete description of its boundary is available \cite{Timorin}:
all functions on the boundary are simultaneously matings and anti-matings.
See also \cite{Luo,A-Y} for other interesting results concerning the case $k=2$.
On the other hand, I do not know any example of a type C hyperbolic component and a type B hyperbolic
component, whose boundaries intersect at more than one point.
In the proof of Theorem \ref{typeC}, many important ideas of \cite{Rees_description} are used.
At some point, we employ an analytic continuation argument similar to that in \cite{A-Y}.

In Section \ref{s_moore}, we prove that under some natural assumptions on a set of simple
disjoint curves on the sphere, there exists a topological regluing of this set.
This statement is a major ingredient in the proof of Theorem \ref{typeC}.
The existence result is based on a theory of Moore, which gives a topological characterization
of spaces homeomorphic to the 2-sphere.

In Section \ref{s_hr}, we define an explicit sequence of approximations to a regluing
(or even to a more general type of surgery).
These approximations are defined and holomorphic on the complements to finitely many curves.
The holomorphicity of approximations may prove to be important.
However, we just introduce the basic notions and postpone a deeper theory for future publications.

\subsection{Acknowledgements}
I am grateful to Mary Rees for hospitality and very use\-ful conversations during my visit to Liverpool.
I also had useful discussions with M. Lyubich, J. Milnor, N. Selinger.

\section{Topological regluing}
\label{s_topreg}

{\footnotesize
We define topological regluing and give simplest examples, in particular, we discuss
topological regluing of quadratic polynomials.
}

\subsection{Definition and examples}
\label{s_tr}
Let $S^1$ denote the unit circle in the plane.
In Cartesian coordinates $(t_1,t_2)$, it is given by the equation $t_1^2+t_2^2=1$.
We will also consider the sphere obtained as the one-point compactification of the
$(t_1,t_2)$-plane.
In this sphere, consider the region $\Delta_\infty$ given by the inequality
$t_1^2+t_2^2>1$ (i.e. the outside of the unit circle).
The closure of this region is denoted by $\overline\Delta_\infty$.

Let $S^2$ denote a topological sphere.
A continuous map $\alpha:S^1\to S^2$ is called an {\em $\alpha$-path} if
$\alpha(t_1,t_2)=\alpha(t'_1,t'_2)$ if and only if $t'_1=t_1$ and $t'_2=\pm t_2$
(in particular, the value of $\alpha$ at $(t_1,t_2)$ depends only on $t_1$).
A continuous map $\beta:S^1\to S^2$ is called a {\em $\beta$-path} if
$\beta(t_1,t_2)=\beta(t'_1,t'_2)$ if and only if $t'_1=\pm t_1$ and $t'_2=t_2$.
Note that every simple path $\gamma:[-1,1]\to S^2$ can be interpreted either as an
$\alpha$-path defined as $\alpha(t_1,t_2)=\gamma(t_1)$ or as a $\beta$-path defined as
$\beta(t_1,t_2)=\gamma(t_2)$.
It is easy to see that every $\alpha$-path $\alpha:S^1\to S^2$ can be extended to
a continuous map $\hat\alpha:\overline\Delta_\infty\to S^2$ such that $\hat\alpha$
is a homeomorphism between $\Delta_\infty$ and $S^2-\alpha(S^1)$.
The extension $\hat\alpha$ is defined uniquely up to a homotopy relative to $S^1$, if
we require additionally that it preserve orientation.
Similarly, every $\beta$-path admits a continuous orientation preserving
extension $\hat\beta:\overline\Delta_\infty\to S^2$
that is a homeomorphism between $\Delta_\infty$ and $S^2-\beta(S^1)$ and that is defined uniquely
up to a homotopy relative to $S^1$.

We can now give a definition of a regluing.
Let $\Ac$ be a set of disjoint $\alpha$-paths in the sphere
(being disjoint means that $\alpha(S^1)\cap\alpha'(S^1)=\emptyset$
for every pair of different $\alpha$-paths $\alpha,\alpha'\in\Ac$).
Define the set $\Im\Ac\subset S^2$ as the union of $\alpha(S^1)$ for all $\alpha\in\Ac$.
Consider also a set $\Bc$ of disjoint $\beta$-paths in the sphere.
The set $\Im\Bc\subset S^2$ is defined similarly.
A bijective continuous map $\Phi:S^2-\Im\Ac\to S^2-\Im\Bc$ is called a regluing of $\Ac$ into $\Bc$
according to a given one-to-one correspondence between $\Ac$ and $\Bc$ if
for every $\alpha\in\Ac$ and the corresponding $\beta\in\Bc$, the map $\Phi\circ\hat\alpha$
extends to $\overline\Delta_\infty$ in such a way that this extension is continuous on $S^1$ and
coincides with $\beta$ on $S^1$
(note, however, that in general the map $\Phi\circ\hat\alpha$ will not be continuous on $\overline\Delta_\infty$,
even in arbitrarily small neighborhood of $S^1$).

\begin{figure}
\centering
\includegraphics[width=11cm]{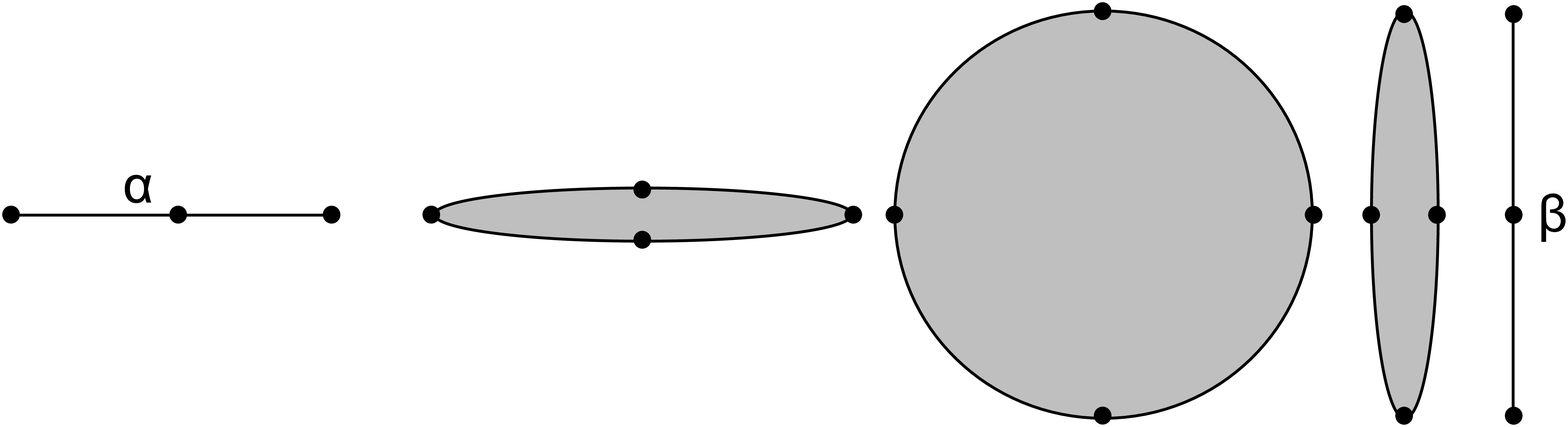}
\caption{A schematic picture of regluing}
\end{figure}

\begin{example}
  As an example, consider the following function
  $$
  j(z)=\sqrt{z^2-1}
  $$
  on the Riemann sphere with the segment $[-1,1]$ removed.
  This function maps $\overline\C-[-1,1]$ to $\overline\C-[-i,i]$ homeomorphically.
  Set $\alpha(t_1,t_2)=t_1$ and $\beta(t_1,t_2)=it_2$.
  Then $j$ reglues $\alpha$ into $\beta$.
\end{example}

\begin{proposition}
  \label{reg_homeo}
  Consider a regluing $\Phi$ of a set $\Ac$ of disjoint $\alpha$-paths into a set $\Bc$ of disjoint $\beta$-paths.
  The map $\Phi:S^2-\Im\Ac\to S^2-\Im\Bc$ is a homeomorphism.
\end{proposition}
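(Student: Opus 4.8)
The plan is to verify that the inverse $\Phi^{-1}:S^2-\Im\Bc\to S^2-\Im\Ac$ is continuous; since $\Phi$ is a continuous bijection by the very definition of a regluing, this is all that is missing. As $S^2$ is compact and metrizable it suffices to argue with sequences: I would fix $y\in S^2-\Im\Bc$ and a sequence $y_n\to y$ with $y_n\in S^2-\Im\Bc$, put $x=\Phi^{-1}(y)$ and $x_n=\Phi^{-1}(y_n)$, and show $x_n\to x$.

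Suppose not. Passing to a subsequence (compactness of $S^2$), we may assume $x_n\to x'$ for some $x'\in S^2$ with $x'\neq x$. If $x'\notin\Im\Ac$, then $x'$ lies in the domain of $\Phi$, so $y_n=\Phi(x_n)\to\Phi(x')$ by continuity of $\Phi$; combined with $y_n\to y=\Phi(x)$ and injectivity of $\Phi$ this gives $x'=x$, a contradiction. Hence $x'\in\Im\Ac$, so $x'\in\alpha(S^1)$ for a unique $\alpha\in\Ac$ (the $\alpha$-paths are disjoint); write $x'=\hat\alpha(s)$, $s\in S^1$. Since each $x_n$ lies in $S^2-\Im\Ac\subset S^2-\alpha(S^1)$ and $\hat\alpha$ restricts to a homeomorphism of $\Delta_\infty$ onto $S^2-\alpha(S^1)$, there are unique $w_n\in\Delta_\infty$ with $\hat\alpha(w_n)=x_n$, and in fact $w_n\in\hat\alpha^{-1}(S^2-\Im\Ac)$, the set on which $\Phi\circ\hat\alpha$ is defined. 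Passing to a further subsequence (compactness of $\overline\Delta_\infty$), we may assume $w_n\to w'\in\overline\Delta_\infty$; continuity of $\hat\alpha$ gives $\hat\alpha(w')=x'\in\alpha(S^1)$, which excludes $w'\in\Delta_\infty$, so $w'\in S^1$. Now invoke the defining property of the regluing: the extension of $\Phi\circ\hat\alpha$ to $\overline\Delta_\infty$ is continuous at each point of $S^1$, with boundary values given by the $\beta$-path $\beta\in\Bc$ corresponding to $\alpha$. Applied at $w'$ along $w_n\to w'$, this yields $y_n=\Phi(\hat\alpha(w_n))\to\beta(w')$, so $y=\beta(w')\in\Im\Bc$ --- contradicting $y\in S^2-\Im\Bc$. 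Both cases are impossible, so $x_n\to x$, and $\Phi^{-1}$ is continuous.

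The step needing the most care is the second case, and in particular the use of the regluing property: one must confirm that the auxiliary points $w_n=\hat\alpha^{-1}(x_n)$ really lie in the domain of $\Phi\circ\hat\alpha$ (they do, exactly because $x_n\notin\Im\Ac$), and that ``continuity of the extension of $\Phi\circ\hat\alpha$ on $S^1$'' is used only as continuity at the boundary point $w'$ for approach from within $\overline\Delta_\infty$ --- which is precisely the convergence $w_n\to w'$ that has been arranged. The parenthetical warning in the definition, that $\Phi\circ\hat\alpha$ need not be continuous even arbitrarily close to $S^1$, refers to discontinuities at \emph{interior} points of $\Delta_\infty$ produced by the other paths of $\Ac$; the argument above never evaluates $\Phi\circ\hat\alpha$ at such a point nor takes a limit toward one, so this causes no trouble. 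The remaining ingredients --- the two subsequence extractions, compactness of $S^2$ and $\overline\Delta_\infty$, and the reduction to sequential continuity --- are routine.
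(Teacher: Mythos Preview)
Your proof is correct and follows essentially the same approach as the paper's: show $\Phi^{-1}$ is sequentially continuous by analyzing any accumulation point $x'$ of $x_n=\Phi^{-1}(y_n)$, ruling out $x'\notin\Im\Ac$ by continuity and injectivity of $\Phi$, and ruling out $x'\in\Im\Ac$ via the regluing property, which forces $y\in\Im\Bc$. The paper compresses the second case into a single sentence (``from the definition of regluing it follows that $y=\beta(t_1,\pm t_2)$''), whereas you spell out the lifting through $\hat\alpha$ and the passage to $w'\in S^1$; this is exactly the content hidden in that sentence, and your careful reading of what ``continuous on $S^1$'' means is on target.
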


\begin{proof}
  We know by definition that this map is bijective and continuous.
  Thus it remains to prove that $\Phi^{-1}:S^2-\Im\Bc\to S^2-\Im\Ac$ is continuous.
  Consider a sequence $y_n\in S^2-\Im\Bc$ that converges to a point $y$ in $S^2-\Im\Bc$, and
  an accumulation point $x$ of the sequence $x_n=\Phi^{-1}(y_n)$ in $S^2$.
  If $x\not\in\Im\Ac$, then we must have $y=\Phi(x)$ by continuity of $\Phi$.
  Such point $x$ is unique.
  If $x=\alpha(t_1,t_2)$ for some $(t_1,t_2)\in S^1$, then from the definition of regluing it
  follows that $y=\beta(t_1,\pm t_2)$, a contradiction.
  Thus there is only one accumulation point, and the sequence $x_n$ converges to $x$.

\end{proof}

For every $\alpha$-path $\alpha:S^1\to S^2$, define a $\beta$-path $\alpha^{\#}$ as follows:
$\alpha^{\#}(t_1,t_2)=\alpha(t_2,t_1)$.
Similarly, the formula $\beta^{\#}(t_1,t_2)=\beta(t_2,t_1)$ makes an $\alpha$-path out of a $\beta$-path $\beta$.
For a set $\Ac$ of $\alpha$- or $\beta$-paths, we can form the set $\Ac^{\#}=\{\alpha^{\#}\ |\ \alpha\in\Ac\}$.

\begin{proposition}
\label{inv}
  Let $\Phi$ be a regluing of a set $\Ac$ of disjoint $\alpha$-paths into a set $\Bc$
  of disjoint $\beta$-paths. Then $\Phi^{-1}$ is a regluing of $\Bc^{\#}$ into $\Ac^{\#}$.
\end{proposition}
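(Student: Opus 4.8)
The plan is to verify the definition of regluing directly for the inverse map $\Psi = \Phi^{-1}$, using the correspondence that sends $\beta^{\#}$ to $\alpha^{\#}$ for each pair $\alpha\in\Ac$, $\beta\in\Bc$ matched by $\Phi$. First I would record the bookkeeping: $\Psi:S^2-\Im\Bc\to S^2-\Im\Ac$ is a bijective continuous map by Proposition \ref{reg_homeo}, and since $\Im(\Bc^{\#})$ as a subset of $S^2$ equals $\Im\Bc$ (taking $\#$ only swaps the roles of the coordinates on $S^1$, not the image), $\Psi$ is a bijective continuous map from the complement of $\Im(\Bc^{\#})$ to the complement of $\Im(\Ac^{\#})$. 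So the only substantive point is the boundary-matching condition near each curve.

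**Key step.** Fix $\alpha\in\Ac$ with corresponding $\beta\in\Bc$. The $\beta$-path $\beta^{\#}$ is defined by $\beta^{\#}(t_1,t_2)=\beta(t_2,t_1)$, and I must show that $\Psi\circ\widehat{\beta^{\#}}$ extends continuously to $S^1$ and agrees there with $\alpha^{\#}$. The natural choice of extension is $\widehat{\beta^{\#}}(t_1,t_2)=\hat\beta(t_2,t_1)$, which is a continuous orientation-preserving extension of $\beta^{\#}$ giving a homeomorphism $\Delta_\infty\to S^2-\beta(S^1)$ (the coordinate swap $(t_1,t_2)\mapsto(t_2,t_1)$ is orientation-reversing on $S^2$, matching the fact that $\#$ turns a $\beta$-path into an $\alpha$-path and conversely — one must be slightly careful here and possibly compose with a reflection to keep orientations straight, but the homotopy-uniqueness clause in the excerpt means any such choice is as good as another). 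Now compute: on a point of $\Delta_\infty$, $\Psi\circ\widehat{\beta^{\#}}(t_1,t_2) = \Phi^{-1}(\hat\beta(t_2,t_1))$. By the defining property of the regluing $\Phi$, the map $\Phi\circ\hat\alpha$ extends continuously to $S^1$ with boundary value $\beta$; since $\hat\alpha$ and $\hat\beta$ are the canonical extensions of $\alpha$-, $\beta$-paths, this says $\Phi^{-1}\circ\hat\beta$ extends continuously to $S^1$ with boundary value $\alpha$ — essentially because $\Phi$ induces a homeomorphism $\overline\Delta_\infty\to\overline\Delta_\infty$ after pre- and post-composing with the path extensions (this is where Proposition \ref{reg_homeo}, continuity of $\Phi^{-1}$ on the complement, does the work, together with the fact that near $S^1$ the two one-sided limits of $\Phi\circ\hat\alpha$ both exist and equal $\beta$). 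Substituting the coordinate swap, $\Phi^{-1}(\hat\beta(t_2,t_1))$ extends to $S^1$ with value $\alpha(t_2,t_1)=\alpha^{\#}(t_1,t_2)$, as required.

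**Expected obstacle.** The routine part is the algebra of swapping coordinates; the delicate part is making rigorous the passage ``$\Phi\circ\hat\alpha$ extends with boundary value $\beta$'' $\Longrightarrow$ ``$\Phi^{-1}\circ\hat\beta$ extends with boundary value $\alpha$''. The issue is that the definition of regluing only asserts a one-sided statement about $\Phi$, and $\Phi\circ\hat\alpha$ is explicitly allowed to be discontinuous on $\overline\Delta_\infty$ (the two sides of the curve go to the two sides of $\beta(S^1)$). I would handle this by noting that $\hat\alpha$ identifies a collar neighborhood of $S^1$ in $\overline\Delta_\infty$ with a one-sided neighborhood of $\alpha(S^1)$; the hypothesis says $\Phi$ carries this, in the limit, to $\beta(S^1)$ respecting the parametrization up to the $t_2\mapsto -t_2$ ambiguity. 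Running the same description for the other side (via the other branch of the collar, or equivalently replacing $\alpha$ by its ``reflected'' extension) shows both one-sided limits of $\Phi$ near $\alpha(S^1)$ land on $\beta(S^1)$, hence $\Phi^{-1}$ near $\beta(S^1)$ has both one-sided limits on $\alpha(S^1)$ with the matching parametrization — which is exactly the regluing condition for $\Psi$ with the $\#$-twisted paths. Once this symmetry is spelled out, the proposition follows; I expect this to be a short paragraph rather than a long argument, since the whole content is the evident symmetry of the regluing relation under interchanging the roles of "the map" and "its inverse", "cut" and "glue", and $\alpha$- and $\beta$-directions.
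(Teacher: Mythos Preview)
Your proposal is correct and follows essentially the same route as the paper's proof: invoke Proposition~\ref{reg_homeo} for continuity of $\Phi^{-1}$, note $\Im\Ac^{\#}=\Im\Ac$ and $\Im\Bc^{\#}=\Im\Bc$, and deduce from the boundary condition $\Phi\circ\hat\alpha\sim\beta$ on $S^1$ that $\Phi^{-1}\circ\hat\beta\sim\alpha$ on $S^1$, then unwind the coordinate swap. You are considerably more careful than the paper about the $\#$-bookkeeping and the ``delicate'' inversion step (the paper simply writes ``It follows that $\Phi^{-1}\circ\hat\beta$ is asymptotic to $\hat\alpha$ on the unit circle'' and leaves the rest implicit), but the underlying argument is the same.
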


\begin{proof}
  By Proposition \ref{reg_homeo}, the map $\Phi^{-1}:S^2-\Im\Ac\to S^2-\Im\Bc$ is a homeomorphism.
  Note that $\Im\Ac^{\#}=\Im\Ac$ and $\Im\Bc^{\#}=\Im\Bc$.
  For every $\alpha\in\Ac$, the composition $\Phi\circ\hat\alpha$ is asymptotic to $\hat\beta$
  near the unit circle, where $\beta$ is the $\beta$-path corresponding to the $\alpha$-path $\alpha$.
  It follows that $\Phi^{-1}\circ\hat\beta$ is asymptotic to $\hat\alpha$ on the unit circle.
\end{proof}

Let $f:S^2\to S^2$ be a continuous map.
Assume that a countable set $\Ac$ of disjoint $\alpha$-paths satisfies the following conditions:
\begin{itemize}
  \item {\em Forward semi-invariance:} for any path $\alpha\in\Ac$, we have $f\circ\alpha\in\Ac$
  or $f\circ\alpha(t_1,t_2)=f\circ\alpha(-t_1,t_2)$ for all $(t_1,t_2)\in S^1$.
  In the latter case, $\alpha(S^1)$ must be disjoint from $\Im\Ac$.
  \item {\em Backward invariance:} we have $f^{-1}(\Im\Ac)\subseteq\Im\Ac$.
\end{itemize}
We say in this case that $\Ac$ is {\em $f$-stable}.
Our main construction is based on the following simple fact:

\begin{theorem}
\label{contin}
  Suppose that $f:S^2\to S^2$ is a continuous map, and $\Ac$ is an $f$-stable set of disjoint $\alpha$-paths.
  Let $\Phi$ be a regluing of $\Ac$ into a set $\Bc$ of disjoint $\beta$-paths.
  Then the map $g=\Phi\circ f\circ\Phi^{-1}$ extends to a continuous map from $S^2$ to $S^2$.
\end{theorem}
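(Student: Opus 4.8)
The plan is to extend $g=\Phi\circ f\circ\Phi^{-1}$ by continuity; the only issue is at the points of $\Im\Bc$. First I would check that $g$ is well defined and continuous on $S^2-\Im\Bc$, which is dense in $S^2$ (a complement of countably many disjoint simple curves is dense): for $y\in S^2-\Im\Bc$ the point $x:=\Phi^{-1}(y)$ lies in $S^2-\Im\Ac$, and backward invariance forces $f(x)\notin\Im\Ac$ (otherwise $x\in f^{-1}(\Im\Ac)\subseteq\Im\Ac$), so $\Phi(f(x))$ is defined, and continuity is that of a composition, using Proposition \ref{reg_homeo} for $\Phi^{-1}$. By the standard criterion for extending a continuous map from a dense subset of a compact metric space, it then suffices to prove that for every $y\in\Im\Bc$ the set of subsequential limits of $g(y_n)$, over all sequences $y_n\to y$ in $S^2-\Im\Bc$, reduces to a single point.

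So I would fix $y\in\Im\Bc$, say $y=\beta(t_1,t_2)$ with $\beta\in\Bc$ corresponding to $\alpha\in\Ac$, fix $y_n\to y$ in $S^2-\Im\Bc$, and (passing to a subsequence) assume $x_n:=\Phi^{-1}(y_n)\to x$ and $g(y_n)=\Phi(f(x_n))\to w$. Since $\Phi(x)=y\notin\Im\Bc$ is impossible, $x\in\Im\Ac$; writing $x_n=\widehat{\alpha''}(u_n)$ with $u_n\in\Delta_\infty$ for the unique path $\alpha''\in\Ac$ through $x$ and passing to a limit $u_n\to u^\ast$, necessarily in $S^1$, the boundary condition in the definition of regluing gives $y=\lim(\Phi\circ\widehat{\alpha''})(u_n)=\beta''(u^\ast)$, so disjointness of $\Bc$ yields $\alpha''=\alpha$. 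Thus $x\in\alpha(S^1)$, $x_n=\hat\alpha(u_n)$, and $\beta(u^\ast)=\beta(t_1,t_2)$ forces $u^\ast=(\pm t_1,t_2)$. Moreover, because $\hat\alpha$ and $\hat\beta$ are continuous on $\overline\Delta_\infty$ while $\Phi^{-1}\circ\hat\beta$ is asymptotic to $\hat\alpha$ along $S^1$ (as in the proof of Proposition \ref{inv}), the second coordinate of $u^\ast$ is exactly $t_2$: the side of $\alpha(S^1)$ from which the $x_n$ approach $x$ is determined by $y$ alone.

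Next I would apply forward semi-invariance to $\alpha$. In the first alternative $\alpha':=f\circ\alpha\in\Ac$; let $\beta'$ be its partner. Then $f(x)=(f\circ\alpha)(u^\ast)=\alpha'(u^\ast)\in\alpha'(S^1)\subseteq\Im\Ac$, while $f(x_n)\notin\Im\Ac$ (again by backward invariance), so $g(y_n)=(\Phi\circ\widehat{\alpha'})(v_n)$ with $v_n:=\widehat{\alpha'}^{-1}(f(x_n))\in\Delta_\infty$ converging (after a subsequence) to some $v^\ast\in\widehat{\alpha'}^{-1}(f(x))\cap S^1$, and $w=\beta'(v^\ast)$ by the boundary condition. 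Here $v^\ast$ is either $u^\ast=(\pm t_1,t_2)$ or its reflection $(\pm t_1,-t_2)$; the content of this step is that the second coordinate is in fact $t_2$, i.e.\ $f$ carries the prescribed $t_2$-side of $\alpha(S^1)$ near $x$ into the corresponding side of $\alpha'(S^1)$ near $f(x)$. Granting this, $w=\beta'(\pm t_1,t_2)=\beta'(t_1,t_2)$ because $\beta'$ is a $\beta$-path, so $w$ depends only on $y$, and one sets $g(y)=\beta'(t_1,t_2)$. In the second alternative $f\circ\alpha(t_1,t_2)=f\circ\alpha(-t_1,t_2)$ and the arc $f(\alpha(S^1))$ is disjoint from $\Im\Ac$; then $f(x)=(f\circ\alpha)(u^\ast)\notin\Im\Ac$, so $\Phi$ is continuous at $f(x)$, $w=\Phi(f(x))$, and $f(x)$ is the same for $u^\ast=(t_1,t_2)$ and $u^\ast=(-t_1,t_2)$, so again $w$ is determined by $y$. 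The degenerate case $t_2=0$, where $y$ is an endpoint of $\beta(S^1)$ and there is essentially one side, is handled in the same way. This gives the single subsequential limit, hence the continuous extension.

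The step I expect to be the real obstacle is the claim in the first alternative that the second coordinate of $v^\ast$ cannot be $-t_2$; if it could, $g(y_n)$ would accumulate at the two distinct points $\beta'(t_1,t_2)$ and $\beta'(t_1,-t_2)$, and $g$ would fail to extend. This is precisely the point where one must use backward invariance and disjointness of $\Ac$ in an essential way rather than mere continuity of $f$: backward invariance keeps $f^{-1}(\Im\Ac)$ inside the disjoint family $\Im\Ac$, and I expect this to force $f$ to send a one-sided neighborhood of a point of $\alpha(S^1)$ to a one-sided neighborhood of its image in $\alpha'(S^1)$, compatibly with the two sides, thereby ruling out the bad folding. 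Establishing this local statement would be the heart of the proof; everything else is bookkeeping with the boundary values of the maps $\hat\alpha$ and $\hat\beta$.
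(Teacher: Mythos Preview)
Your approach is essentially the paper's: show $g$ is well defined and continuous on $S^2-\Im\Bc$, then at a point $\beta(t_1,t_2)$ trace $\Phi^{-1}(y_n)$ back to the two possible accumulation points $\alpha(\pm t_1,t_2)$ and split according to the two alternatives in forward semi-invariance. The difference is only in how the first alternative is handled. You introduce intermediate coordinates $v_n=\hat\alpha_1^{-1}(f(x_n))$ and then worry about the sign of the second coordinate of $v^\ast$; the paper instead writes the whole composite as $\Phi\circ(f\circ\hat\alpha)$ and observes that, since $(f\circ\hat\alpha)|_{S^1}=f\circ\alpha=\alpha_1$, the map $f\circ\hat\alpha$ is itself a continuous extension of $\alpha_1$ to $\overline\Delta_\infty$ and hence equals $\hat\alpha_1$ ``up to a suitable homotopy''. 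The regluing condition for $\alpha_1$ then applies directly to give that $\Phi\circ f\circ\hat\alpha$ extends to $S^1$ as $\beta_1$, so $g(\beta(t_1,t_2))=\beta_1(t_1,t_2)$.

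In other words, the step you single out as the ``real obstacle'' is exactly the content of the paper's one-line assertion $f\circ\hat\alpha=\hat\alpha_1$ up to homotopy; the paper does not justify it further and does not regard it as the heart of the matter. By working with the composite rather than reintroducing $\hat\alpha_1^{-1}$, you avoid ever having to compare the two sides of $\alpha_1(S^1)$: the boundary values of $\Phi\circ(f\circ\hat\alpha)$ along $S^1$ are read off directly from the regluing definition applied with $f\circ\hat\alpha$ playing the role of $\hat\alpha_1$. So your argument is correct but more laborious than necessary; adopting the paper's viewpoint collapses your last paragraph to a single sentence.
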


\begin{proof}
  Note that the set $S^2-\Im\Ac$ is forward invariant under $f$, which
  follows from the backward invariance of $\Ac$.
  In particular, the map $g$ is well-defined and continuous on $S^2-\Im\Bc$.
  Consider a sequence $y_n\in S^2-\Im\Bc$ converging to $\beta(t_1,t_2)$ for some $\beta\in\Bc$ and $(t_1,t_2)\in S^1$.
  Then the sequence $x_n=\Phi^{-1}(y_n)$ can have at most two accumulation points,
  namely, $\alpha(\pm t_1,t_2)$, where $\alpha\in\Ac$ is the $\alpha$-path corresponding
  to the $\beta$-path $\beta$.
  By the forward semi-invariance, we have two cases: either $f\circ\alpha\in\Ac$, or the two values $f\circ\alpha(\pm t_1,t_2)$
  coincide and lie in the complement to $\Im\Ac$.
  In the first case, we have $f\circ\hat\alpha=\hat\alpha_1$ for some $\alpha_1\in\Ac$ up to a suitable homotopy,
  and $\Phi\circ\hat\alpha_1$ extends to $\overline\Delta_\infty$ in such a way
  that the extension is continuous on $S^1$ and coincides with $\beta_1\in\Bc$ on $S^1$.
  It follows that $\Phi\circ f\circ\Phi^{-1}$ extends continuously into $\beta(S^1)$ and sends $\beta(t_1,t_2)$ to $\beta_1(t_1,t_2)$.
  In the second case, the image $\Phi\circ f\circ\alpha(\pm t_1,t_2)$ is well-defined and unique.
\end{proof}

We would like to apply this theorem as follows.
Let $f:\overline\C\to\overline\C$ be a rational function.
For certain classes of rational functions $f$, there are natural ways to
produce $f$-stable sets of paths.
Then the corresponding map $g$ is supposed to give a model for a new rational function.
Note that the topological dynamics of $g$ is very easy to understand in terms of the topological
dynamics of $f$, because $\Phi$ is a topological conjugation except on $\Im(\Ac)$.
A remarkable fact is that in many cases, the regluing $\Phi$ makes sense in
a certain holomorphic category, so that the construction may actually produce a rational
function $g$ rather than just a continuous map.

Let $X$ be a compact metric space, and $\Ac$ a set of compact subsets of $X$.
We say that $\Ac$ is {\em contracted} if for every $\eps>0$, there are only finitely
many elements of $\Ac$, whose diameter exceeds $\eps$.
The following theorem is needed for the construction of topological models:

\begin{theorem}
\label{conj_tr}
  Let $\Ac$ be a countable contracted set of disjoint $\alpha$-paths.
  Then there exists a regluing of $\Ac$ into some set $\Bc$ of disjoint $\beta$-paths.
  Moreover, one can arrange $\Bc$ to be contracted.
\end{theorem}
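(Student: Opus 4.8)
The plan is to build the reglued sphere $S^2-\Im\Bc$ together with the map $\Phi$ as a quotient of an auxiliary compact space, and to verify that the quotient is again a $2$-sphere by invoking Moore's theorem (as announced for Section \ref{s_moore}). Concretely, for each $\alpha$-path $\alpha\in\Ac$ consider a closed annular neighborhood $N_\alpha$ of $\alpha(S^1)$; after cutting $S^2$ along every $\alpha(S^1)$ one obtains a surface whose boundary consists of two copies of each circle $\alpha(S^1)$. Regluing prescribes a new identification of these boundary circles: instead of matching $(t_1,t_2)$ on one side with $(t_1,t_2)$ on the other side (which recovers $S^2$), we match according to the $\beta$-path combinatorics, i.e. we glue the two half-circles $\{t_2>0\}$ and $\{t_2<0\}$ of one side to each other (and likewise on the other side), following the model of $j(z)=\sqrt{z^2-1}$ worked out in the Example. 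Call the resulting space $Y$. The map $\Phi$ is then the tautological identification on $S^2-\Im\Ac$, which by construction extends to the boundary circles in the prescribed (discontinuous) manner.

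The first substantive step is to show $Y$ is homeomorphic to $S^2$. I would realize $Y$ as a decomposition space of $S^2$ itself: pull back the gluing data to describe a upper semicontinuous decomposition $\mathcal G$ of $S^2$ whose non-degenerate elements are arcs (or points), namely for each $\alpha\in\Ac$ one splits the corresponding segment into the two fibers of $\alpha$ over each value $t_1$, collapsed appropriately; the contractedness hypothesis is exactly what guarantees that for every $\eps>0$ only finitely many decomposition elements have diameter exceeding $\eps$, which forces $\mathcal G$ to be upper semicontinuous (and its elements are cellular, being arcs). Moore's theorem then gives that $S^2/\mathcal G\cong S^2$. This is the step I expect to be the main obstacle: one must set up the decomposition so that (a) its quotient really is the reglued sphere $Y$, not merely an abstract $2$-sphere, and (b) the countably many arcs, which need not form a closed set, still assemble into an upper semicontinuous decomposition — this is where the ``contracted'' hypothesis does all the work, and getting the point-set topology right (nested neighborhoods, diameters shrinking) is the delicate part.

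Once $Y\cong S^2$ is in hand, I would extract the $\beta$-paths: the image in $Y$ of each cut circle $\alpha(S^1)$ is a curve $\beta(S^1)$, and the gluing recipe shows that $\beta$ satisfies exactly the defining identification $\beta(t_1,t_2)=\beta(t_1',t_2')\iff t_1'=\pm t_1,\ t_2'=t_2$, i.e. it is a $\beta$-path; disjointness of the $\beta(S^1)$ follows from disjointness of the $\alpha(S^1)$ and the locality of the construction. The map $\Phi:S^2-\Im\Ac\to Y-\Im\Bc=S^2-\Im\Bc$ is a continuous bijection by construction, and checking the boundary behavior against the definition of regluing is the routine verification that $\Phi\circ\hat\alpha$ extends continuously to $S^1$ with boundary values $\beta$ — this is local and modeled verbatim on $j$.

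Finally, for the ``moreover'' clause I would choose, for each $\alpha\in\Ac$, the new curve $\beta(S^1)$ inside a neighborhood whose diameter in the target metric is comparable to (say, at most twice) the diameter of $\alpha(S^1)$; since $\Ac$ is contracted, for every $\eps>0$ only finitely many $\alpha(S^1)$ have diameter exceeding $\eps/2$, hence only finitely many $\beta(S^1)$ exceed $\eps$, so $\Bc$ is contracted. Here one uses that $Y$ is compact metric (being homeomorphic to $S^2$) and that the quotient map $S^2\to Y$ is uniformly continuous, so small sets map to small sets; the only care needed is to fix the target metric before estimating, which is harmless since all metrics inducing the topology of $S^2$ are uniformly equivalent on a compact space up to the modulus of continuity bookkeeping that the contractedness notion is designed to tolerate.
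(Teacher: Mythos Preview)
Your plan contains a genuine gap at the central step: you propose to ``realize $Y$ as a decomposition space of $S^2$ itself'' with non-degenerate elements being arcs, and then apply Moore's theorem on upper semicontinuous decompositions. But the reglued sphere is \emph{not} a quotient of $S^2$. Look at the model $j(z)=\sqrt{z^2-1}$: on the arc $[-1,1]$ each interior point acquires \emph{two} images on $[-i,i]$ (one for each side), while the two endpoints $\pm 1$ both go to the single midpoint $0$. So the passage from the $\alpha$-picture to the $\beta$-picture is one-to-two on the interior of each arc and two-to-one at the endpoints; there is no monotone map $S^2\to Y$ whose point-inverses are your $\mathcal G$. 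Your sentence ``splits the corresponding segment into the two fibers of $\alpha$ over each value $t_1$'' cannot be carried out inside $S^2$, because those two fibers already coincide as a single point of $S^2$. Cutting first along all the $\alpha$-arcs to get an intermediate surface with boundary is also not available when $\Im\Ac$ is dense: that ``cut'' space has no obvious Hausdorff topology, and in any case it is not $S^2$, so Moore's decomposition theorem does not apply to it.

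The paper gets around this by a two-layer use of Moore's theory. For every \emph{finite} $\Ac'\subset\Ac$ one can reglue the paths in $\Ac'$ by hand (locally, since the arcs are disjoint) and then collapse each remaining arc in $\Ac-\Ac'$ to a point; contractedness makes this collapse upper semicontinuous, and Moore's decomposition theorem yields a sphere $Y_{\Ac'}$. The target space $X$ is defined set-theoretically as $(S^2-\Im\Ac)$ together with formal $\beta$-points, and its topology is generated by pullbacks of Jordan domains from all the $Y_{\Ac'}$. One then checks Hausdorffness, compactness, and the three axioms of the paper's variant of Moore's theorem (Jordan domain, Extension, Covering) for $X$, using the maps $\phi_{\Ac'}:X\to Y_{\Ac'}$. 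The point is that the decomposition theorem is used only for the approximants $Y_{\Ac'}$, where a genuine quotient is available; the limit space $X$ is handled by the axiomatic characterization of the sphere instead. Your outline would need a comparable device to replace the non-existent decomposition $\mathcal G$.
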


In the statement of the theorem, it is said that the set $\Ac$ of paths should be contracted.
This is a minor abuse of terminology.
To be more precise, the set of connected components of $\Im\Ac$ is assumed to be contracted.
The statement of the theorem may seem intuitively obvious (and it is in fact obvious for the
case of finite $\Ac$).
Note, however, that the set $\Im\Ac$ may be everywhere dense in the sphere, and
even have full measure.
We will prove Theorem \ref{conj_tr} in Section \ref{s_regs} using Moore's theory.
It is useful to know that the property of being contracted is topological, and does not
depend on a particular metric:

\begin{proposition}
\label{contracted}
  Let $X$ be a compact metric space.
  A set $\Ac$ of compact subsets of $X$ is contracted if and only if for every
  open covering $\Ec$ of $X$, there is a finite subset $\Ac'\subset\Ac$ such that
  every element of $\Ac-\Ac'$ is contained in an element of $\Ec$.
\end{proposition}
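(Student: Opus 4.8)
The plan is to prove both implications directly from the definitions, passing between the metric condition (``only finitely many elements of diameter $>\eps$'') and the covering condition (``all but finitely many elements are subordinate to $\Ec$'') by exploiting compactness of $X$ and the Lebesgue number lemma.

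For the forward implication, suppose $\Ac$ is contracted in the metric sense, and let $\Ec$ be an arbitrary open covering of $X$. Since $X$ is compact metric, $\Ec$ has a Lebesgue number $\delta>0$: every subset of $X$ of diameter less than $\delta$ is contained in some member of $\Ec$. By the contracted hypothesis applied with $\eps=\delta$, there are only finitely many elements of $\Ac$ with diameter $\ge\delta$; call this finite set $\Ac'$. Every element of $\Ac-\Ac'$ then has diameter $<\delta$, hence is contained in an element of $\Ec$. This gives the covering condition.

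For the converse, suppose the covering condition holds, and fix $\eps>0$. Cover $X$ by the finitely many (by compactness) open balls of radius $\eps/3$; call this covering $\Ec$. By hypothesis there is a finite $\Ac'\subset\Ac$ such that every element of $\Ac-\Ac'$ lies inside some ball of radius $\eps/3$, and therefore has diameter at most $2\eps/3<\eps$. Consequently any element of $\Ac$ of diameter $\ge\eps$ must belong to $\Ac'$, so there are only finitely many such elements. Hence $\Ac$ is contracted in the metric sense.

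The argument is essentially routine; the only real content is the appeal to the Lebesgue number lemma in the forward direction, which is exactly where compactness of $X$ is used in an essential way (the converse uses only total boundedness, which for a compact metric space is automatic). The statement then makes clear that ``contracted'' is a genuinely topological notion, as claimed in the paragraph following Theorem \ref{conj_tr}: neither implication refers to the specific metric beyond the open sets it generates, since every covering argument can be rephrased with the open cover $\Ec$ in place of metric balls.
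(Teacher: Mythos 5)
Your proof is correct and follows the same route as the paper's: a Lebesgue number argument in the forward direction and a cover by small metric balls in the converse. The only difference is cosmetic --- you use balls of radius $\eps/3$ so the diameter bound comes out cleanly below $\eps$, whereas the paper's version with $\eps$-balls is slightly careless about the factor of $2$ (harmless, since $\eps$ is arbitrary).
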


In other terms, the number $\eps>0$ can be replaced with an open covering $\Ec$.

\begin{proof}
  First prove the only if part.
  Let $\Ec$ be an open cover of $X$, and $\eps>0$ its Lebesgue number.
  Recall that a Lebesgue number of $\Ec$ is defined as a positive real number $\eps>0$ such that
  every set of points of diameter less than $\eps$ belong to a single element of $\Ec$.
  Set $\Ac'$ to be the set of all elements of $\Ac$, whose diameter is at least $\eps$.
  Then, by the Lebesgue number lemma, every element of $\Ac-\Ac'$ is contained in
  an element of the cover $\Ec$.

  Let us now prove the only if part.
  Choose any $\eps>0$, and consider the covering $\Ec$ of $X$ by all $\eps$-balls.
  Then there is a finite subset $\Ac'\subset\Ac$ such that every set in $\Ac-\Ac'$
  is contained in an element of $\Ec$.
  It follows that the diameter of any set in $\Ac-\Ac'$ does not exceed $\eps$.
\end{proof}

\begin{corollary}
\label{contracted_image}
  Let $X$ and $Y$ be compact metric spaces, and $\phi:X\to Y$ a continuous map.
  If $\Ac$ is a contracted family of compacts sets in $X$, then
  $\phi(\Ac)=\{\phi(A)\ |\ A\in\Ac\}$ is a contracted family of compact sets in $Y$.
\end{corollary}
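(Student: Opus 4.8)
The plan is to reduce the statement about the image family $\phi(\Ac)$ to the characterization of ``contracted'' in terms of open coverings given by Proposition \ref{contracted}, thereby avoiding any explicit reference to metrics and diameters on the target space $Y$. Concretely, I would take an arbitrary open covering $\Ec$ of $Y$ and produce a finite subset of $\phi(\Ac)$ outside of which every element is contained in some member of $\Ec$.

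First I would pull the covering back: since $\phi$ is continuous, the family $\phi^{-1}(\Ec)=\{\phi^{-1}(E)\ |\ E\in\Ec\}$ is an open covering of $X$. Because $\Ac$ is contracted, Proposition \ref{contracted} gives a finite subset $\Ac'\subset\Ac$ such that every element of $\Ac-\Ac'$ is contained in some $\phi^{-1}(E)$ with $E\in\Ec$. Applying $\phi$ to such an inclusion $A\subseteq\phi^{-1}(E)$ yields $\phi(A)\subseteq E$. Thus, setting $\phi(\Ac)'=\phi(\Ac')$, which is finite since $\Ac'$ is, we conclude that every element of $\phi(\Ac)-\phi(\Ac)'$ is contained in an element of $\Ec$. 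One should note the minor point that $\phi(A)$ is compact whenever $A$ is, since $\phi$ is continuous and $X$ (hence $A$) is compact, so $\phi(\Ac)$ is indeed a family of compact subsets of $Y$ as required; and $\phi(\Ac)-\phi(\Ac)'\subseteq\phi(\Ac-\Ac')$, so the inclusion-into-$\Ec$ property passes to the set difference. By Proposition \ref{contracted} again, applied now to $Y$, the family $\phi(\Ac)$ is contracted.

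There is essentially no obstacle here: the whole content is that the notion of being contracted is intrinsically topological, which is exactly what Proposition \ref{contracted} provides, and continuous maps pull back open covers to open covers. The only thing to be slightly careful about is the bookkeeping with set differences — an element of $\phi(\Ac)-\phi(\Ac)'$ need not itself be of the form $\phi(A)$ with $A\in\Ac-\Ac'$, but it is of the form $\phi(A)$ for \emph{some} $A\in\Ac$, and any such $A$ that lies in $\Ac'$ would force $\phi(A)\in\phi(\Ac)'$; hence every $A$ with $\phi(A)\notin\phi(\Ac)'$ lies in $\Ac-\Ac'$ and therefore $\phi(A)\subseteq E$ for some $E\in\Ec$. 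This is exactly the verification needed, and the corollary follows.
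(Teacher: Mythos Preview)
Your proof is correct and follows essentially the same approach as the paper's own proof: pull back an arbitrary open cover $\Ec$ of $Y$ to $X$ via $\phi$, apply Proposition~\ref{contracted} to $\Ac$, and push forward. The paper's argument is terser (it omits the bookkeeping about set differences and the remark on compactness of $\phi(A)$), but the underlying idea is identical.
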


\begin{proof}
  Indeed, let $\Ec$ be any open cover of $Y$.
  Consider the corresponding cover $\phi^*(\Ec)$ of $X$.
  By Proposition \ref{contracted}, all elements of $\Ac$ but finitely many are subsets of
  elements of $\phi^*(\Ec)$.
  It follows that all elements of $\phi(\Ac)$ but finitely many are subsets of elements of $\Ec$.
\end{proof}

\subsection{Regluing of ramified coverings}
The setting of ramified coverings is most commonly used for topological discussions  of rational functions.
On one hand, ramified coverings are objects of topological nature, and are much
more flexible than holomorphic functions.
On the other hand, they are nice objects and do not have pathologies of general continuous maps.
This is why we want the regluing construction to fit into the contest of
topological ramified coverings.

Let $f:S^2\to S^2$ be a ramified covering.
Consider a contracted set $\Ac$ of simple disjoint $\alpha$-paths in the sphere.
We say that $\Ac$ is {\em strongly $f$-stable} if it is $f$-stable, and satisfies the following additional
assumption: all critical points in $\Im\Ac$ have the form $\alpha(0,1)$, where
$\alpha\in\Ac$ is a path such that $f\circ\alpha(t_1,t_2)=f\circ\alpha(-t_1,t_2)$
for all $(t_1,t_2)\in S^1$; moreover, these critical points are simple.

\begin{theorem}
  \label{reg_rc}
  Let $f:S^2\to S^2$ be a topological ramified covering, and $\Ac$ a contracted strongly $f$-stable set of
  of disjoint $\alpha$-paths.
  Consider a regluing $\Phi:S^2-\Im\Ac\to S^2-\Im\Bc$ of $\Ac$ into some contracted set $\Bc$
  of disjoint $\beta$-paths.
  Then the map $g=\Phi\circ f\circ\Phi^{-1}$ extends by continuity to a ramified self-covering of $S^2$.
\end{theorem}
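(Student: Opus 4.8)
The plan is to take the continuous extension furnished by Theorem \ref{contin} and upgrade it to a ramified covering by producing, around every point of $S^2$, a chart in which the extension reads as $z\mapsto z^{d}$ for some finite $d\ge 1$; since being a ramified covering is a local property, this suffices once compactness of $S^2$ and orientation are taken into account. First, a strongly $f$-stable set is in particular $f$-stable, so Theorem \ref{contin} gives a continuous extension $g\colon S^2\to S^2$ of $\Phi\circ f\circ\Phi^{-1}$, which I keep calling $g$. Recall from the proof of Theorem \ref{contin} that $S^2-\Im\Ac$ is forward $f$-invariant, and from Proposition \ref{reg_homeo} that $\Phi$ and $\Phi^{-1}$ are mutually inverse homeomorphisms between $S^2-\Im\Ac$ and $S^2-\Im\Bc$; thus on $S^2-\Im\Bc$ the map $g$ equals $\Phi\circ f\circ\Phi^{-1}$ in the honest sense.

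Away from $\Im\Bc$ the local models come for free: near a point of $S^2-\Im\Bc$, after the homeomorphic coordinate changes $\Phi^{-1}$ on the source and $\Phi$ on the target, $g$ is conjugate to $f$ restricted to a forward invariant piece of $S^2-\Im\Ac$, hence it is locally $z\mapsto z^{d}$, and its critical points there are precisely the $\Phi$-images of the critical points of $f$ lying outside $\Im\Ac$, with unchanged local degrees. The one subtlety is that $\Im\Ac$ and $\Im\Bc$ may be dense, even of full measure. This is absorbed by the contractedness hypothesis: by Proposition \ref{contracted} and Corollary \ref{contracted_image}, near any fixed point all but finitely many of the $\alpha$-paths, of the $\beta$-paths, and of their $f$- and $g$-images have arbitrarily small diameter, so genuine $S^2$-open charts can be extracted, and the points of $\overline{\Im\Bc}-\Im\Bc$, which are accumulated only by such small paths, are handled in the same manner.

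The substance is the behaviour along $\Im\Bc$. Fix $\beta\in\Bc$ with corresponding $\alpha\in\Ac$ and use forward semi-invariance. If $f\circ\alpha\in\Ac$, say $f\circ\hat\alpha=\hat\alpha_1$ up to homotopy with $\alpha_1\in\Ac$, then by strong $f$-stability $\alpha(S^1)$ carries no critical point of $f$, so $f$ maps some neighbourhood of $\alpha(S^1)$ homeomorphically onto a neighbourhood of $\alpha_1(S^1)$ carrying the $\alpha$-arc to the $\alpha_1$-arc; since $\Phi^{-1}$ opens $\beta(S^1)$ up to $\alpha(S^1)$ and $\Phi$ closes $\alpha_1(S^1)$ up into $\beta_1(S^1)$ (exactly as in the proof of Theorem \ref{contin}), and since the two endpoints of the simple arc $\alpha_1(S^1)$ are distinct, $g$ is a local homeomorphism at \emph{every} point of $\beta(S^1)$, its $t_2=0$ point included. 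If instead $f\circ\alpha(t_1,t_2)=f\circ\alpha(-t_1,t_2)$, the only critical point of $f$ on $\alpha(S^1)$ is the simple point $c=\alpha(0,1)$, near which $f$ is topologically $z\mapsto z^2$ with $\alpha(S^1)$ unfolding to $\R$ and the image arc $f(\alpha(S^1))$ — which is disjoint from $\Im\Ac$, so that $\Phi$ is a local homeomorphism along it — the folded half-line $[0,\infty)$. Tracking how the regluing pairs the two sides of an interior point of $\alpha(S^1)$ with points of $\beta(S^1)$, and the two endpoints $\alpha(\pm1,0)$ of the $\alpha$-arc with the $t_2=0$ point $\beta(\pm1,0)$ of $\beta(S^1)$, one reads off that the continuous extension of $g$ is a local homeomorphism at every point of $\beta(S^1)$ except at $\beta(\pm1,0)$, where the two components of the complement of $\beta(S^1)$ in a small disc each map homeomorphically onto the complement, near its endpoint $\Phi(f(\alpha(\pm1,0)))$, of the arc $\Phi(f(\alpha(S^1)))$, while $\beta(S^1)$ folds onto that arc — i.e.\ $g$ has exactly one simple local-degree-$2$ point there. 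This is where the simple-critical-point clause of strong $f$-stability is consumed, and the bookkeeping is the expected one: each folding path removes one simple critical value of $f$ and restores one simple critical value of $g$.

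Assembling the two preceding steps gives, around every $y\in S^2$, a local model $z\mapsto z^{d}$ with $1\le d<\infty$; compactness of $S^2$ forces the degree to be finite and the branch set finite, and $g$ inherits orientation-preservation from $f$ and $\Phi$, so $g$ is a ramified self-covering of $S^2$. I expect the main obstacle to be the folding case: converting the heuristic ``the two sides of $c$ reglue to the two endpoints of $\beta(S^1)$, and the two endpoints of $\alpha(S^1)$ reglue to the $t_2=0$ point of $\beta(S^1)$'' into a rigorous, purely topological determination of the local degree of the \emph{continuous extension} $g$ along $\beta(S^1)$ — the constituent maps $\Phi$, $f$, $\Phi^{-1}$ being discontinuous there, so that one cannot simply compose them — and checking that the resulting global count of local degrees is consistent. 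The density of $\Im\Ac$ and $\Im\Bc$, managed through contractedness, is a secondary nuisance.
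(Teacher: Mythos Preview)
Your case analysis is the paper's: off $\Im\Bc$, on a non-folding $\beta$, on a folding $\beta$. The paper streamlines the execution in two places you may want to borrow. First, rather than building $z\mapsto z^d$ charts directly, it invokes the criterion of Theorem~\ref{ramcov}: once one exhibits a finite candidate critical set $C_g$ (the $\Phi$-images of critical points of $f$ off $\Im\Ac$, together with the points $\beta(1,0)$ for those $\beta$ whose partner $\alpha$ has $\alpha(0,1)$ critical) and shows $g$ is locally injective away from $C_g$, the ramified-covering structure at $C_g$ follows automatically. This dissolves exactly the folding-case difficulty you flag at the end --- you never have to compute the local degree at $\beta(1,0)$ by hand. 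Second, to cope with the possible density of $\Im\Ac$ and $\Im\Bc$, the paper uses the set-level extension $\hat\Phi$ (defined in the proof of Proposition~\ref{bs}): a Jordan neighbourhood whose boundary avoids $\Im\Ac$ is carried by $\hat\Phi$ to a Jordan neighbourhood on the $g$-side, so injectivity of $f$ on the former gives injectivity of $g$ on the latter. This replaces your appeal to contractedness via Proposition~\ref{contracted} with a single clean device.
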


We will prove this theorem in Section \ref{s_ramcov}.
In the statement of the theorem, we assumed that $\Bc$ is contracted.
We do not need to check this, however, because this can always be arranged by Theorem \ref{conj_tr}.
On the other hand, the condition is superfluous, because for any regluing $\Phi$ of a contractible
set $\Ac$ of disjoint $\alpha$-paths into some set $\Bc$ of disjoint $\beta$-paths, the set $\Bc$
will automatically be contracted.
Indeed, by Theorem \ref{conj_tr}, there is a regluing $\Phi'$ of $\Ac$ into some contracted set $\Bc'$
of disjoint $\beta$-paths.
Then $\Phi\circ(\Phi')^{-1}$ extends to the sphere as a continuous map.
Moreover, it maps $\Bc'$ to $\Bc$.
By Corollary \ref{contracted_image}, the set $\Bc$ must also be contracted.

\subsection{Topological regluing of quadratic polynomials}
\label{s_trqp}
In this section, we will not say anything new about the dynamics of quadratic polynomials.
However, we can illustrate the idea of regluing using quadratic polynomials as an example.

Let $\Rc$ be an external ray in the parameter plane of quadratic polynomials
(we write quadratic polynomials in the form $p_c(z)=z^2+c$, thus the parameter plane is the $c$-plane).
Suppose that $\Rc$ lands at a point $c$ on the boundary of the Mandelbrot set.
Suppose that the Julia set of $f=p_c$ is locally connected, and that all periodic points of $f$ (except $\infty$)
are repelling.
The ray $\Rc$ determines a pair of rays $R^+_f$ and $R^-_f$ in the dynamical plane of $f$
that land at the critical point $0$ (for parameter values in the ray $\Rc$, these
two rays crash into $0$).
Note that there may be more rays landing at $0$, but the pair of rays $R^+_f$, $R^-_f$ is distinguished.

Fix any real number $\rho>0$.
Consider the $\alpha$-path $\alpha_0:S^1\to\overline\C$ in the dynamical plane of $f$
defined as follows:
$$
\alpha_0(t_1,t_2)=\left\{\begin{array}{cl}
R^+_f(t_1\rho),& t_1>0,\\
0,& t_1=0,\\
R^-_f(-t_1\rho),& t_1<0.
\end{array}\right.
$$
Here the dynamical rays are parameterized by the values of the Green function,
thus $R(t)$ stands for the point in the ray $R$, at which the Green function is equal to $t$.
Then, for each $n\ge 0$, the multivalued function $f^{-n}\circ\alpha_0$ has $2^n$ branches,
each being an $\alpha$-path.
All these paths are called {\em pullbacks} of $\alpha_0$ under the iterates of $f$.
Let $\Ac$ denote the set of such pullbacks, including $\alpha_0$.
Clearly, $\Ac$ is strongly $f$-stable.

Now consider the quadratic polynomial $g=p_{c_0}$, where $c_0=\Rc(2\rho)$ is the
point on the external parameter ray $\Rc$ with parameter $2\rho$ (the
external parameter rays are parameterized by the value of the Green function at the
critical value).
This means that, in the dynamical plane of $g$, the value of the Green function
at the critical value $c_0$ is equal to $2\rho$.
Therefore, the value of the Green function at the critical point $0$ is equal to $\rho$.
There are exactly two rays that are bounded and contain $0$ in their closures (here by a ray we mean a gradient curve
of the Green function).
Denote these rays by $R^+_g$ and $R^-_g$.
These two rays can also be parameterized by the values of the Green function,
thus the parameter runs through the interval $(0,\rho)$.
Let $z^+$ and $z^-$ be the landing points of the rays $R^+_g$ and $R^-_g$
(these rays land because the angle of $\Rc$ cannot be a rational number with an odd denominator,
which is the only case when one of the rays $R^+_g$ and $R^-_g$ can crash into a precritical point).
Define the following $\beta$-path $\beta_0:S^1\to\overline\C$ in the dynamical plane of $g$:
$$
\beta_0(t_1,t_2)=\left\{\begin{array}{cl}
z^+,& t_2=1,\\
R^+_g(|t_1|\rho),& t_2>0,\\
0,& t_2=0,\\
R^-_g(|t_1|\rho),& t_2<0,\\
z^-,& t_2=-1.
\end{array}\right.
$$
Let $\Bc$ denote the set of all pullbacks of $\beta_0$, including $\beta_0$.

There is a natural one-to-one correspondence between the sets of paths $\Ac$ and $\Bc$.
For any path $\alpha\in\Ac$, the point $\alpha(1,0)$ belongs to a unique external ray of angle $\theta$.
There is a unique path $\beta\in\Bc$ such that the ray of angle $\theta$ crashes into $\beta(1,0)$.
We will make this path $\beta$ correspond to the path $\alpha$.

\begin{theorem}
\label{troqp}
  There exists a regluing $\Phi$ of $\Ac$ into $\Bc$ such that $g(x)=\Phi\circ f\circ\Phi^{-1}(x)$
  at all points $x$, where the right-hand side is defined.
\end{theorem}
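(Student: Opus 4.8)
The plan is to build $\Phi$ explicitly: start from B\"ottcher coordinates near infinity, propagate it inward along the dynamics by a functional equation, extend it to the Julia set by a limiting argument, and then check that the map so obtained is a regluing of $\Ac$ into $\Bc$.

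First, the part near infinity. Let $\phi_f$ be the B\"ottcher coordinate of $f$; since $J_f$ is connected, $\phi_f$ is a conformal isomorphism from the basin of infinity of $f$ onto $\{|w|>1\}$ conjugating $f$ to $w\mapsto w^2$. For $g$, the B\"ottcher coordinate $\phi_g$ extends to a conformal isomorphism from the region $\{G_g>\rho\}$, the complementary component of the critical level set $\{G_g=\rho\}$ of the Green function $G_g$ of $g$ containing $\infty$, onto $\{|w|>e^{\rho}\}$, still conjugating $g$ to $w\mapsto w^2$. Every curve of $\Ac$ lies in $\{G_f\le\rho\}$, so $\Phi_0:=\phi_g^{-1}\circ\phi_f$ is a well defined conformal isomorphism from $\{G_f>\rho\}$ onto $\{G_g>\rho\}$ with $\Phi_0\circ f=g\circ\Phi_0$ wherever both sides make sense. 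By the standard relation between parameter and dynamical rays, the angle of $\Rc$ is twice the angles of $R^{+}_f$ and $R^{-}_f$; hence the continuous boundary extension of $\Phi_0$ to $\{G_f=\rho\}$ collapses the two points $\alpha_0(\pm1,0)=R^{\pm}_f(\rho)$ onto the critical point of $g$ and carries germs of $R^{+}_f$, $R^{-}_f$ near level $\rho$ to germs of $R^{+}_g$, $R^{-}_g$ near that point. This is precisely the data that pins down the correspondence $\Ac\leftrightarrow\Bc$ described before the theorem.

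Next, the inner part. I extend $\Phi$ to the basin of infinity of $f$ minus $\Im\Ac$ by $\Phi=g^{-1}\circ\Phi\circ f$: if $\rho/2^{n+1}<G_f(x)\le\rho/2^{n}$ and $x\notin\Im\Ac$, then $f^{n+1}(x)\in\{G_f>\rho\}$, and I set $\Phi(x)$ to be the preimage of $\Phi_0(f^{n+1}(x))$ under $g^{n+1}$ obtained by choosing, one level at a time, the inverse branch of $g$ prescribed by the combinatorics: the component of $\{G_f\le\rho/2^m\}\setminus\Im\Ac$ containing $x$ selects a nested sequence of ``tiles'' of $f$, and the matching branches carry these to the corresponding nested tiles of $g$ cut out by $\Im\Bc$. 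Consistency of the branch choices (so $\Phi$ is independent of $n$) comes from $\Phi_0\circ f=g\circ\Phi_0$ and forward semi-invariance of $\Ac$; injectivity comes from pairwise disjointness of tiles of a given level on both sides. Finally, for $x\in J_f\setminus\Im\Ac$ I let $\Phi(x)$ be the point to which the nested closed tiles through $x$ are carried: on the $f$-side these shrink to $x$ by local connectedness of $J_f$, and on the $g$-side their images shrink to a single point of $J_g$ because $\Bc$ and the associated $g$-tiles form a contracted family (automatic here, but in any case arrangeable by Theorem~\ref{conj_tr}). Running the same bookkeeping backwards, using that $\Phi_0$ is already onto $\{G_g>\rho\}$ and that $J_g$ is a Cantor set (every address is attained), shows $\Phi$ is a bijection onto $\overline{\C}\setminus\Im\Bc$.

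It then remains to check that $\Phi$ is a regluing of $\Ac$ into $\Bc$ conjugating $f$ to $g$. Continuity plus bijectivity gives that $\Phi$ is a homeomorphism $\overline{\C}\setminus\Im\Ac\to\overline{\C}\setminus\Im\Bc$ (as in Proposition~\ref{reg_homeo}); the regluing condition at each $\alpha\in\Ac$, that the boundary values of $\Phi\circ\hat\alpha$ from the two sides of $\alpha(S^1)$ trace out the corresponding $\beta$, is for $\alpha_0$ exactly the collapsing behavior of $\Phi_0$ on $\{G_f=\rho\}$ noted above (the two sides of the arc $\alpha_0(S^1)$ are carried to the two bounded rays $R^{\pm}_g$ that make up $\beta_0(S^1)$), and for the remaining curves it follows by applying the chosen branches of $g^{-1}$; the identity $g=\Phi\circ f\circ\Phi^{-1}$ is built into the functional equation on the basin of infinity and extends to $J_f$ by continuity. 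The hard part is the inner part: showing that the level-by-level branch choices are globally consistent and yield an injective continuous map---that is, that the combinatorial correspondence $\Ac\leftrightarrow\Bc$ genuinely comes from a regluing. This is where both hypotheses enter (local connectedness of $J_f$, to force the $f$-tiles to shrink to points so that the extension to $J_f$ is single-valued and injective, and the contracted property on the $g$-side for the analogous statement), and a convenient way to package it is to build the nested tile correspondence on both sides simultaneously and invoke the same limiting mechanism that proves Theorem~\ref{conj_tr}. Alternatively, one could first extract \emph{some} regluing of $\Ac$ from Theorem~\ref{conj_tr}, observe via Theorem~\ref{reg_rc} that it conjugates $f$ to a degree-two ramified self-covering with Cantor Julia set, and only afterwards identify that covering with $p_{c_0}$ and its $\beta$-paths with $\Bc$ using the B\"ottcher normalization of the first paragraph---at the price of a rigidity input for topological quadratic polynomials with a Cantor Julia set.
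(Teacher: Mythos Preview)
Your strategy is the same as the paper's --- build $\Phi$ on the basin of infinity via B\"ottcher data, extend to $J_f$ by a limit, verify the regluing axioms --- but your execution is more laborious than necessary, and in one place the justification you give is not quite the right one.

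The paper does the Fatou part in a single stroke: since $J_f$ is connected, $\phi_f$ uniformizes the whole basin of infinity of $f$; the analogous $\phi_g$ is (implicitly) the analytic continuation of the B\"ottcher coordinate of $g$, which is defined exactly on the complement in $\Delta$ of the slits $\phi_f^{-1}(\Im\Ac)$. So $\Phi=\phi_g\circ\phi_f^{-1}$ is immediately defined on all of $U=\text{(Fatou set of }f)\setminus\Im\Ac$, with the conjugacy relation automatic. Your level-by-level pullback $\Phi=g^{-1}\circ\Phi\circ f$ with branch choices dictated by tiles is precisely the inductive construction of this analytic continuation, so you are doing the same thing with more bookkeeping.

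For the extension to $J_f\setminus\Im\Ac$ and for injectivity, the paper organizes everything around \emph{kneading sequences} with respect to the partitions $\Gamma_f$ and $\Gamma_g$: two distinct points of $J_f$ (resp.\ $J_g$) cannot share a kneading sequence, by the standard Poincar\'e-metric expansion argument (this is where the hypothesis that all finite periodic points of $f$ are repelling is used, not just local connectedness). Since $\Phi$ preserves kneading data by construction, any limit of $\Phi(z_n)$ is forced to be the unique $J_g$-point with that sequence, and distinct $z$ go to distinct images. Your ``nested tiles shrink'' is the same statement --- tile addresses are kneading sequences --- but your stated reason on the $f$-side (``by local connectedness of $J_f$'') is not sufficient on its own, and your appeal to Theorem~\ref{conj_tr} on the $g$-side is misplaced: that theorem manufactures an abstract $\Bc$, whereas here you need that the \emph{given} $g$-tiles shrink, which follows because $g$ is a Cantor-Julia-set quadratic (hyperbolic on $J_g$), i.e.\ again the kneading/expansion argument.

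So your proof is correct in outline and nearly identical in content; replacing the tile-shrinking heuristics by the one-line kneading-sequence uniqueness statement would both close the imprecision and shorten the argument considerably. Your closing alternative via Theorems~\ref{conj_tr} and~\ref{reg_rc} plus a rigidity step is a genuinely different route, but as you note it imports a nontrivial external ingredient that the direct construction avoids.
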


Before proceeding with the proof of this theorem, we need to recall the definition of
kneading sequences.
  The union $\Gamma_f$ of the rays $R_f^+$ and $R_f^-$ together with their common landing point $0$
  divides the complex plane into two parts: the positive part and the negative part.
  We label the two parts positive or negative arbitrarily, the only requirement is that one
  part be positive and one part be negative.
  For a point $x$ not in $f^{-n}(\Gamma_f)$, we define $\sigma^n_f(x)$ to be $1$ or $-1$
  depending on whether $f^n(x)$ belongs to the positive or to the negative part of the plane.
  The sequence of numbers $\sigma^n_f(x)$ (which may be finite or infinite depending on whether
  or not $x$ is eventually mapped to $\Gamma_f$) is called the {\em kneading sequence} of $x$.
  Similarly, we define $\Gamma_g$ to be the union of $\{0\}$ and the external rays in the dynamical plane of $g$
  that crash into $0$ (they have the same external angles as the rays $R^+_f$ and $R^-_f$).
  The definition of kneading sequences carries over to the dynamical plane of $g$, where we use $\Gamma_g$ instead of $\Gamma_f$.
  However, the positive and negative parts in the dynamical plane of $f$ should correspond
  to the positive and negative parts in the dynamical plane of $g$, i.e. the corresponding parts
  should contain rays of the same angles.
  In the dynamical plane of $g$, as well as in the dynamical plane of $f$, there cannot be
  two different points in the Julia set with the same kneading sequence.
  This is a basic Poincar\'e distance argument, see e.g. \cite{Milnor}.

\begin{proof}[Proof of Theorem \ref{troqp}]
  Consider the complement $U$ to the closure of $\Im\Ac$.
  Since the closure of $\Im\Ac$ contains the Julia set of $f$, the set $U$ is an
  open subset of the Fatou set.
  Actually, $U$ is the complement in the Fatou set to $\Im\Ac$.

  We first define the map $\Phi$ just on $U$.
  Let $\phi_f$ be the B\"ottcher parameterization for $f$, i.e. the holomorphic automorphism
  between the unit disk $\Delta$ and the Fatou set of $f$ such that $\phi_f(z^2)=f\circ\phi_f(z)$ for all $z\in\Delta$.
  Similarly, we define $\phi_g$ to be the B\"ottcher parameterization for $g$.
  Set $\Phi=\phi_g\circ\phi_f^{-1}$.
  Clearly, we have $g=\Phi\circ f\circ\Phi^{-1}$ on $\Phi(U)$.
  Note also that $\Phi$ preserves the values of the Green function.

  It is easy to see that $\Phi$ extends continuously to each side of each path $\alpha\in\Ac$.
  The extension preserves the values of the Green function.
  It follows that, for every $\alpha\in\Ac$, the function $\Phi\circ\hat\alpha$ extends to the unit
  circle as the corresponding $\beta$-path $\beta\in\Bc$, and the extension is continuous on the
  unit circle.

  We now need to show that for any point $z\in\C-\Im\Ac$ and any sequence $z_n\in U$ converging to $z$,
  the sequence $w_n=\Phi(z_n)$ converges to a well-defined point in the dynamical plane of $g$,
  and this point does not depend on the choice of the sequence $z_n$.
  Indeed, any limit point of the sequence $w_n$ must have the same kneading sequence as $z$,
  therefore, this can only be one point.
  We denote this point by $\Phi(z)$, which is justified by the fact that $\Phi$ extends continuously to $z$.
  Note that, for different $z$, the points $\Phi(z)$ have different kneading sequences, and hence are different.
  This finishes the proof of the theorem.
\end{proof}

The reason for the proof shown above to be so simple is that we know a lot about
topological dynamics of both $f$ and $g$.
However, we would like to use regluing to describe new kinds of topological dynamics, and
that would be necessarily more complicated.

\subsection{Topological models via regluing}
\label{s_tmvr}
First, we need to make the notion of topological model more precise.
Define an (abstract) {\em topological model} as the collection of the following data:
\begin{itemize}
\item
A ramified topological covering $f:X\to X$, where $X$ is a topological space
homeomorphic to the 2-sphere.
\item
A compact fully invariant subset $J\subset X$, called the {\em Julia set} of $f$.
The complement to the Julia set is called the {\em Fatou set}.
\item
A complex structure (i.e. a Riemann surface structure) on the Fatou set
such that $f$ is holomorphic with respect to this structure.
\end{itemize}
The topological space $X$, the map $f$ and the set $J$ are called the
{\em model space}, the {\em model map} and the {\em model Julia set}, respectively.
Instead of referring to a topological model as $(f,X,J)$, we will sometimes
simply say ``topological model $f$''.
We will sometimes call $X$ the {\em dynamical sphere} of $f$.

Although we do not require anything else for the definition, it is usually
meant that a topological model should have a simple explicit dynamical behavior.
This is the reason why we do not require an invariant complex structure to be defined
on the whole space $X$: in most cases, it is hard to do this explicitly.
On the other hand, it is relatively easy to define an explicit invariant complex structure
just on the Fatou set.

Of course, any rational function is an abstract topological model.
We say that two abstract topological models $(f,X_f,J_f)$ and $(g,X_g,J_g)$ are {\em equivalent} if
there is a homeomorphism $\phi:X_f\to X_g$ conjugating $f$ with $g$ and
such that $\phi(X_f-J_f)=X_g-J_g$ and $\phi|_{X_f-J_f}$ is holomorphic.
We say that a topological model $f$ {\em models} a rational function $R$
if $f$ is equivalent to $R$ as an abstract topological model.

There are several important combinatorial constructions that modify or combine
topological models into new topological models.
Among the most well-known are matings and captures.
Let us now define another combinatorial operation on topological models that uses regluing.
There are interesting relationships between matings, captures and regluings,
which we may discuss elsewhere.

Consider a topological model $(f,X,J)$, and
a strongly $f$-stable set $\Ac$ of disjoint $\alpha$-paths in $X$.
Define an {\em accumulation point of $\Ac$} as a point $x\in X$ such that every open
neighborhood of $x$ intersects infinitely many elements of $\Ac$.
We will make the following assumptions on $\Ac$:
\begin{enumerate}
  \item\label{ass1} the set $\Ac$ is contracted;
  \item\label{ass2} all accumulation points of $\Ac$ belong to the Julia set of $f$;
  \item\label{ass3} for every $\alpha\in\Ac$, there exists $n>0$ such that $f^{\circ n}\circ\alpha\not\in\Im\Ac$.
\end{enumerate}
Under these assumptions, we will define another topological model using regluing.

By assumption \ref{ass1}, there exists a regluing $\Phi:X-\Im\Ac\to Y-\Im\Bc$ of $\Ac$ into some other contracted set
$\Bc$ of simple paths in a topological sphere $Y$.
We set $g$ to be the continuous extension of $\Phi\circ f\circ\Phi^{-1}$, which exists by Theorem \ref{contin}.
Since $f$ is a ramified covering, and $\Ac$ is contracted and strongly $f$-stable, the map $g$
is also a ramified covering by Theorem \ref{reg_rc}.
Define the Fatou set of $g$ as the set of all points $y\in Y$ such that,
for some nonnegative integer $n$, we have $g^{\circ n}(y)\not\in\Im\Bc$ (thus $\Phi^{-1}$
is defined at this point) and $\Phi^{-1}(g^{\circ n}(y))\in X-J$.
Clearly, if this condition is satisfied for one particular $n$, then it also holds for all bigger $n$.
Therefore, the Fatou set of $g$ thus defined is fully invariant.

It remains to define a complex structure on the Fatou set of $g$ invariant under $g$.
Take a point $y$ in the Fatou set of $g$ and its iterated image $y'=g^{\circ n}(y)$ such
that $y'\not\in\Im\Bc$ and $x'=\Phi^{-1}(y')$ is in the Fatou set of $f$.
Since $\Ac$ does not accumulate in the Fatou set of $f$, there is a neighborhood $U$ of $x'$
disjoint from $\Im\Ac$ and a holomorphic embedding $\xi:U\to\C$ such that $\xi(x')=0$.
Note that $\Phi:U\to\Phi(U)$ is a homeomorphism.
Therefore, $\xi\circ\Phi^{-1}$ is an embedding of the neighborhood $\Phi(U)$ of $y'$ into $\C$.
Finally, if $k$ is the local degree of $g^{\circ n}$ at $y$, then we can define
a local complex coordinate near $y$ as a branch of $\sqrt[k]{\xi\circ\Phi^{-1}\circ g^{\circ n}}$.
We have now defined a complex coordinate near every point of the Fatou set of $g$.
It is easy to check that all transition functions are holomorphic, and that $g$ is holomorphic
with respect to the obtained complex structure on the Fatou set.

We defined a topological model $g$.
This topological model will be called the topological model obtained from $(f,X,J)$ by regluing of $\Ac$.

\section{Boundary points of type C hyperbolic components}
\label{s_typeC}
{\footnotesize
In this section, we discuss some general properties of quadratic rational functions on the
boundaries of type C hyperbolic components, preparing for the proof of Theorem \ref{typeC}.
}

\subsection{Equicontinuous families and holomorphic motion}
Below we recall a standard fact about equicontinuous families widely used in
complex dynamics:

\begin{proposition}
\label{ec}
  Let $\Lambda$ be a topological space, and $X$ a metric space.
  Consider an equicontinuous family $\Fc$ of maps $f:\Lambda\to X$.
  Let $O\subset\Lambda$ be an open subset and $\nu:\Lambda\to X$ a
  continuous map such that $\nu(\lambda)\in\Fc(\lambda)$ for all $\lambda\in O$.
  Then for all $\lambda^*\in\d O$, we have
  $$
  \nu(\lambda^*)\in\overline{\Fc(\lambda^*)}.
  $$
\end{proposition}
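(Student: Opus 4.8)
The plan is to prove this by a routine $\eps/2$ argument that pits the equicontinuity of $\Fc$ against the continuity of $\nu$. Fix a point $\lambda^*\in\d O$ and a number $\eps>0$; it suffices to produce some $f\in\Fc$ with the distance from $f(\lambda^*)$ to $\nu(\lambda^*)$ less than $\eps$, since then $\nu(\lambda^*)$ lies within $\eps$ of the set $\Fc(\lambda^*)=\{f(\lambda^*)\ |\ f\in\Fc\}$ for every $\eps$, hence in its closure.

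First I would invoke equicontinuity of $\Fc$ at the point $\lambda^*$ to obtain an open neighborhood $W_1$ of $\lambda^*$ such that $d(f(\lambda),f(\lambda^*))<\eps/2$ for every $f\in\Fc$ and every $\lambda\in W_1$. Next I would use continuity of $\nu$ at $\lambda^*$ to obtain an open neighborhood $W_2$ of $\lambda^*$ with $d(\nu(\lambda),\nu(\lambda^*))<\eps/2$ for $\lambda\in W_2$, and set $W=W_1\cap W_2$. Since $\lambda^*\in\d O\subset\overline O$, the open set $W$ meets $O$; choose $\lambda\in W\cap O$. By hypothesis $\nu(\lambda)\in\Fc(\lambda)$, so $\nu(\lambda)=f(\lambda)$ for some $f\in\Fc$. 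For this $f$, the triangle inequality gives
$$
d(f(\lambda^*),\nu(\lambda^*))\le d(f(\lambda^*),f(\lambda))+d(f(\lambda),\nu(\lambda^*))=d(f(\lambda^*),f(\lambda))+d(\nu(\lambda),\nu(\lambda^*))<\eps/2+\eps/2=\eps,
$$
which is exactly what was needed; letting $\eps\to 0$ completes the argument.

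I do not expect any genuine obstacle here. The only points demanding a little care are the interpretation of $\Fc(\lambda)$ as the set $\{f(\lambda)\ |\ f\in\Fc\}$, so that the hypothesis on $\nu$ is the pointwise selection property $\nu(\lambda)=f_\lambda(\lambda)$ used above, and the observation that the neighborhood $W$ really does intersect $O$, which is precisely the statement that a boundary point of the open set $O$ lies in $\overline O$. It is worth remarking that the argument uses equicontinuity of $\Fc$ at the boundary point $\lambda^*$ itself (not merely on $O$), which is why the hypothesis is phrased for the whole family on all of $\Lambda$.
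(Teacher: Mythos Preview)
Your proof is correct and essentially identical to the paper's own argument: both use equicontinuity of $\Fc$ and continuity of $\nu$ at $\lambda^*$ to obtain two neighborhoods, intersect them with $O$, select $f\in\Fc$ with $f(\lambda)=\nu(\lambda)$ there, and apply the triangle inequality. The only cosmetic difference is that the paper frames it as a proof by contradiction (assuming $d(\nu(\lambda^*),\Fc(\lambda^*))=\eps>0$) whereas you give the direct $\eps/2$ version.
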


\begin{proof}
  Assume the contrary: $d(\nu(\lambda^*),\Fc(\lambda^*))=\eps>0$,
  where $d$ denotes the distance in $X$, and the distance between a point and a
  set is defined as the infimum of distances between this point and points in the set.
  There is a neighborhood $V'$ of $\lambda^*$ in $\Lambda$ such that
  $$
  d(\nu(\lambda),\nu(\lambda^*))<\frac\eps 2
  $$
  for all $\lambda\in V'$.
  This follows from the continuity of $\nu$.
  On the other hand, there is a neighborhood $V''$ of $\lambda^*$ such that
  $$
  d(f(\lambda),f(\lambda^*))<\frac\eps 2
  $$
  for all $\lambda\in V''$ and all $f\in\Fc$.
  This follows from the equicontinuity of $\Fc$.
  Therefore, for every $\lambda\in V'\cap V''$ and every $f\in\Fc$, we have
  $$
  d(\nu(\lambda^*),f(\lambda^*))\le d(\nu(\lambda^*),\nu(\lambda))+
  d(\nu(\lambda),f(\lambda))+d(f(\lambda),f(\lambda^*))<
  $$
  $$
  <d(\nu(\lambda),f(\lambda))+\eps.
  $$
  Take $\lambda\in V'\cap V''\cap O$ and $f\in\Fc$ such that $\nu(\lambda)=f(\lambda)$.
  Then $d(\nu(\lambda^*),f(\lambda^*))<\eps$, a contradiction.
\end{proof}

Let $\Lambda$ be a complex analytic manifold, and $A$ a set.
Recall that a {\em holomorphic motion} over $\Lambda$ is a map $\mu:\Lambda\times A\to\overline\C$
such that $\mu(\lambda,a)\ne\mu(\lambda,b)$ for $a\ne b$ and the map
$\mu_a:\lambda\mapsto\mu(\lambda,a)$ is holomorphic for every $a\in A$.
We do not require that $A\subset\overline\C$ and that $a\mapsto\mu(\lambda_0,a)$
is the identity for some $\lambda_0\in\Lambda$, although these conditions are
usually included into a definition.
Thus we use the term ``holomorphic motion'' in a slightly more general sense.
The following well-known fact is very simple but important (see e.g. \cite{MSS}):

\begin{theorem}
\label{mss_eq}
  Let $\Lambda$ be a Riemann surface and $\mu:\Lambda\times A\to\overline\C$ a holomorphic motion.
  Then the family of functions $\mu_a$, $a\in A$, is equicontinuous.
\end{theorem}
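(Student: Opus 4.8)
The plan is to run the classical Montel--theorem argument after a holomorphic change of coordinates on the target sphere that normalizes three strands of the motion. Since equicontinuity is a local property of the parameter (as in the statement of Proposition \ref{ec}, it is tested in a neighbourhood of each point), it suffices to prove it near an arbitrary $\lambda_0\in\Lambda$; passing to a coordinate chart, I may assume $\Lambda$ is a disk in $\C$. I may also assume that $A$ has at least four points, since otherwise the family $\{\mu_a\}$ is finite, and a finite family of continuous maps is trivially equicontinuous.

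Fix three distinct points $a_1,a_2,a_3\in A$. For each $\lambda$ the three values $\mu_{a_1}(\lambda),\mu_{a_2}(\lambda),\mu_{a_3}(\lambda)$ are pairwise distinct, by the injectivity clause in the definition of a holomorphic motion, so there is a unique M\"obius transformation $M_\lambda$ of $\overline\C$ carrying them to $0,1,\infty$ respectively; explicitly $M_\lambda(z)$ is the cross-ratio $[z,\mu_{a_1}(\lambda);\mu_{a_2}(\lambda),\mu_{a_3}(\lambda)]$. After shrinking the disk so that these three points stay finite, the coefficients of $M_\lambda$ are holomorphic in $\lambda$, and $(\lambda,z)\mapsto M_\lambda(z)$ is continuous from $\Lambda\times\overline\C$ to $\overline\C$. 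Put $\nu_a(\lambda)=M_\lambda(\mu_a(\lambda))$. Each $\nu_a$ is holomorphic in $\lambda$, and for every $a\in A\setminus\{a_1,a_2,a_3\}$ the function $\nu_a$ omits the values $0,1,\infty$: e.g. $\nu_a(\lambda)=0$ would force $\mu_a(\lambda)=\mu_{a_1}(\lambda)$, contradicting $a\ne a_1$.

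By Montel's theorem, the family $\{\nu_a:a\in A\setminus\{a_1,a_2,a_3\}\}$ of holomorphic maps $\Lambda\to\overline\C\setminus\{0,1,\infty\}$ is normal, and a normal family of maps into the compact space $\overline\C$ is equicontinuous for the spherical metric (Arzel\`a--Ascoli). Adjoining the three constants $\nu_{a_1}\equiv 0$, $\nu_{a_2}\equiv 1$, $\nu_{a_3}\equiv\infty$ does not spoil equicontinuity, so $\{\nu_a:a\in A\}$ is equicontinuous near $\lambda_0$. Finally $\mu_a(\lambda)=M_\lambda^{-1}(\nu_a(\lambda))$, and $(\lambda,w)\mapsto M_\lambda^{-1}(w)$ is continuous, hence uniformly continuous on a compact neighbourhood of $(\lambda_0,\overline\C)$; composing this with the equicontinuity of $\{\nu_a\}$ gives equicontinuity of $\{\mu_a\}$ near $\lambda_0$, which is what we want.

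The only substantive input is the invocation of Montel in the third paragraph; the rest is bookkeeping. The step that needs a little care is the holomorphic dependence of the normalizing family $M_\lambda$ on $\lambda$ together with its non-degeneracy at $\lambda_0$ --- this is exactly where the condition $\mu(\lambda,a)\ne\mu(\lambda,b)$ for $a\ne b$ enters, and it is what makes the three normalized strands genuinely constant rather than merely pairwise distinct, so that they may be freely adjoined to the normal family.
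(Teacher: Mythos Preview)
Your proof is correct and rests on the same idea as the paper's: pick three strands $\mu_{a_1},\mu_{a_2},\mu_{a_3}$ of the motion and use Montel to force normality, hence equicontinuity, of the rest. The difference is only in packaging. The paper invokes as a black box the ``moving targets'' form of Montel's theorem --- a family whose graphs avoid three pairwise disjoint holomorphic graphs is normal --- and applies it directly to the $\mu_a$. You instead carry out the standard proof of that very generalization: post-compose with the holomorphic family of M\"obius maps $M_\lambda$ that straightens the three chosen strands to the constants $0,1,\infty$, apply the classical three-omitted-values Montel to the normalized family $\{\nu_a\}$, and then undo $M_\lambda$ using uniform continuity on a compact neighbourhood. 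Your route is slightly longer but more self-contained, since it does not presuppose the generalized Montel statement; the paper's is terser but relies on the reader knowing that result. One small cosmetic point: your phrase ``after shrinking the disk so that these three points stay finite'' tacitly assumes none of the $\mu_{a_i}(\lambda_0)$ is $\infty$; if one is, first rotate the target sphere by a fixed M\"obius map, which does not affect equicontinuity.
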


\begin{proof}
  If $A$ is finite, then the statement is obvious.
  Suppose that $A$ is infinite, and take three different points $a_1,a_2,a_3\in A$.
  We can use the following generalization of Montel's theorem:
  if a family of holomorphic functions is such that the graphs of all functions
  in the family avoid the graphs of three different holomorphic functions, and these three graphs
  are disjoint, then the family is equicontinuous.
  In our case, we can take $\mu_{a_i}$, $i=1,2,3$.
  These three holomorphic functions have disjoint graphs, and the graph of any
  function $\mu_a$, $a\ne a_1$, $a_2$, $a_3$, is disjoint from the graphs of $\mu_{a_i}$.
  Thus the family of functions $\mu_a$ is equicontinuous.
\end{proof}

The following well-known theorem is proved in \cite{MSS}:

\begin{theorem}
\label{mss_ext}
  Suppose now that $A\subset\overline\C$ and that $\mu(\lambda_0,a)=a$
  for some $\lambda_0\in\Lambda$ and all $a\in A$.
  Then $\mu$ extends to a holomorphic motion
  $\overline\mu:\Lambda\times\overline A\to\overline\C$, and, for every $\lambda\in\Lambda$,
  the map $a\mapsto\overline\mu(\lambda,a)$ from $\overline A$ to $\overline\C$
  is quasi-symmetric.
\end{theorem}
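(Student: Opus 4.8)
The plan is to reduce everything to the Schwarz--Pick inequality on the thrice-punctured sphere $\Omega:=\overline\C\setminus\{0,1,\infty\}$, applied to cross-ratios that depend holomorphically on $\lambda$. If $A$ has fewer than three points, then $\overline A=A$ and there is nothing to prove, so I would fix three distinct points $a_1,a_2,a_3\in A$ and let $M_\lambda$ be the unique M\"obius transformation sending $\mu(\lambda,a_1),\mu(\lambda,a_2),\mu(\lambda,a_3)$ to $0,1,\infty$; an explicit formula shows that $M_\lambda$ depends holomorphically on $\lambda$. Replacing $\mu$ by the holomorphic motion $\mu'(\lambda,a):=M_\lambda(\mu(\lambda,a))$ costs nothing: the extension problem is equivalent (undo $M_\lambda$ at the end), and quasi-symmetry is preserved under pre- and post-composition with M\"obius maps. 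After this normalization $\mu'(\lambda,a_i)$ is constant for $i=1,2,3$, so each $\mu'_a$ is a holomorphic map $\Lambda\to\Omega$, and I may invoke the Schwarz--Pick inequality: $\rho(\mu'_a(\lambda),\mu'_a(\lambda'))\le k_\Lambda(\lambda,\lambda')$, where $\rho$ is the complete Poincar\'e metric of $\Omega$ and $k_\Lambda$ is the Kobayashi pseudometric of $\Lambda$ (finite and bounded on compact sets). The same applies to the cross-ratio of the images under $\mu'(\lambda,\cdot)$ of any four distinct points of $A$, since this too is a holomorphic map $\Lambda\to\Omega$.

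Next I would extend $\mu'$ over $\overline A$. Fix $a^*\in\overline A$; if $a^*\in\{a_1,a_2,a_3\}$ nothing is needed, so assume not, whence $\widehat{a^*}:=M_{\lambda_0}(a^*)\in\Omega$. Choose $a_n\in A$ with $a_n\to a^*$ and, for large $n\ne m$, consider the holomorphic map $c_{n,m}:\Lambda\to\Omega$ given by $c_{n,m}(\lambda)=[\mu'_{a_n}(\lambda),\mu'_{a_m}(\lambda),0,1]$. Since $\mu(\lambda_0,\cdot)$ is the identity, $c_{n,m}(\lambda_0)=[\widehat{a_n},\widehat{a_m},0,1]\to1\in\partial\Omega$ as $n,m\to\infty$; by Schwarz--Pick, $c_{n,m}(\lambda)$ stays within $\rho$-distance $k_\Lambda(\lambda,\lambda_0)$ of $c_{n,m}(\lambda_0)$, so $c_{n,m}(\lambda)\to1$ in $\overline\C$, uniformly where $k_\Lambda(\,\cdot\,,\lambda_0)$ is bounded. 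On the other hand each $\mu'_{a_n}(\lambda)$ lies in a fixed $\rho$-ball about $\widehat{a^*}$, hence in a fixed compact set $K'\subset\Omega$. Because $z\mapsto[z,w,0,1]$ is a non-degenerate M\"obius map depending continuously on $w\in K'$ and sending $w$ to $1$, the convergence $c_{n,m}(\lambda)\to1$ forces $\mu'_{a_n}(\lambda)-\mu'_{a_m}(\lambda)\to0$ in the spherical metric, uniformly. Thus $(\mu'_{a_n})$ is locally uniformly Cauchy; its limit $\nu_{a^*}$ is holomorphic, $\Omega$-valued, and independent of the choice of $(a_n)$ (interleave two approximating sequences). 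I then set $\overline\mu'(\lambda,a^*):=\nu_{a^*}(\lambda)$ and $\overline\mu(\lambda,a^*):=M_\lambda^{-1}(\overline\mu'(\lambda,a^*))$; this extends $\mu$ and keeps $\overline\mu(\lambda_0,\cdot)=\mathrm{id}$. Injectivity of $\overline\mu(\lambda,\cdot)$ on $\overline A$ follows from the same device applied to $[\mu'_{a_n}(\lambda),\mu'_{b_n}(\lambda),0,1]$ with $a_n\to a^*$, $b_n\to b^*$, $a^*\ne b^*$: now the value at $\lambda_0$ converges in $\Omega$, so the cross-ratio stays in a compact subset of $\Omega$ and its limit $[\overline\mu'(\lambda,a^*),\overline\mu'(\lambda,b^*),0,1]$ is in particular $\ne1$, i.e.\ $\overline\mu'(\lambda,a^*)\ne\overline\mu'(\lambda,b^*)$. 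Hence $\overline\mu$ is a holomorphic motion of $\overline A$.

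For the quasi-symmetry assertion (trivial unless $\overline A$ has at least four points), I would fix $\lambda$ and, for any four distinct $x,y,z,w\in\overline A$, consider $C(\lambda'):=[\overline\mu'(\lambda',x),\overline\mu'(\lambda',y),\overline\mu'(\lambda',z),\overline\mu'(\lambda',w)]$, a holomorphic map $\Lambda\to\Omega$ with $C(\lambda_0)=[x,y,z,w]$ by M\"obius invariance of the cross-ratio. Schwarz--Pick then yields
$$
\rho\bigl([\,\overline\mu'(\lambda,x),\overline\mu'(\lambda,y),\overline\mu'(\lambda,z),\overline\mu'(\lambda,w)\,],\,[x,y,z,w]\bigr)\le k_\Lambda(\lambda,\lambda_0).
$$
This uniform bound on the distortion of all cross-ratios is, by the standard cross-ratio criterion, equivalent to $\overline\mu'(\lambda,\cdot)$ being quasi-symmetric; since $\overline\mu(\lambda,\cdot)=M_\lambda^{-1}\circ\overline\mu'(\lambda,\cdot)$ differs from it only by post-composition with a M\"obius map, the map $a\mapsto\overline\mu(\lambda,a)$ is quasi-symmetric as well. (If one wants the stronger conclusion that it extends to a quasiconformal homeomorphism of $\overline\C$, one can invoke S{\l}odkowski's theorem.)

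I expect the main obstacle to be the purely geometric step of converting Schwarz--Pick contraction in the intrinsic metric $\rho$ of $\Omega$ into honest estimates in the spherical metric of $\overline\C$ near the three punctures. This is exactly what makes the limit $\nu_{a^*}$ genuinely exist --- one exploits that when two marked points nearly coincide the associated cross-ratio runs off to the puncture $1$, so that Schwarz--Pick pins it down quantitatively --- and it is also what turns the cross-ratio bound into an honest modulus of quasi-symmetry, the delicate point being that a $\rho$-ball of $\Omega$ is only comparable to a round annulus near each puncture. The remaining ingredients --- holomorphic dependence of $M_\lambda$, normal-family and interleaving arguments --- are routine; and in the degenerate cases where $\Lambda$ is $\C$, $\overline\C$, or a torus, one has $k_\Lambda\equiv0$, the motion is M\"obius, and the statement is trivial.
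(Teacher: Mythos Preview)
The paper does not give its own proof of this theorem: it simply states it as a well-known result and cites \cite{MSS} (Ma\~n\'e--Sad--Sullivan). So there is nothing in the paper to compare your argument against.

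That said, your sketch is essentially the standard MSS proof of the $\lambda$-lemma: normalize three marked points to $0,1,\infty$, regard each $\mu'_a$ as a holomorphic map into the thrice-punctured sphere $\Omega$, and use Schwarz--Pick contraction of the hyperbolic metric on $\Omega$ to control cross-ratios. The extension to $\overline A$ via Cauchy sequences of cross-ratios, the injectivity argument, and the quasi-symmetry conclusion from a uniform bound on cross-ratio distortion are all correct in outline and match the classical treatment. The caveats you flag yourself---translating $\rho$-estimates near the cusps of $\Omega$ into spherical estimates, and converting the cross-ratio bound into an honest quasi-symmetry modulus---are genuine but well-known lemmas (see, e.g., the original MSS paper or standard expositions of the $\lambda$-lemma). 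One minor point: your claim that $k_\Lambda$ is ``finite and bounded on compact sets'' needs $\Lambda$ to be Kobayashi-hyperbolic; you handle the degenerate cases $\C$, $\overline\C$, torus at the end, but strictly speaking you should also note that on any hyperbolic Riemann surface $k_\Lambda$ agrees with the Poincar\'e metric, so Schwarz--Pick applies as stated.
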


Using this theorem, we can prove the following (cf. e.g. \cite{A-Y}):

\begin{proposition}
\label{dA}
  Consider a holomorphic motion $\mu$ satisfying the assumptions of Theorem \ref{mss_ext},
  with $\Lambda$ being a Riemann surface.
  Assume that $A$ is an open subset of $\overline\C$.
  Consider a continuous function $\nu:\Lambda\to\overline\C$ and the subset $O$ of $\Lambda$
  consisting of all $\lambda\in\Lambda$ such that $\nu(\lambda)\in\mu(\lambda,A)$.
  Then $O$ is open.
  Moreover, $\nu(\lambda^*)\in\d\mu(\lambda,A)$ if $\lambda^*\in\d O$.
\end{proposition}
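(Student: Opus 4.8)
My plan is to prove the two assertions—that $O$ is open, and that $\nu(\lambda^*) \in \d\mu(\lambda,A)$ for $\lambda^* \in \d O$—by combining the extension theorem for holomorphic motions (Theorem \ref{mss_ext}) with the equicontinuity machinery (Theorem \ref{mss_eq} and Proposition \ref{ec}). The key point is that openness of $O$ is genuinely a statement about the \emph{interior} of the moving set $\mu(\lambda,A)$, which is hard to control directly; to get it we should pass to the boundary $\d A$ and use that its motion is disjoint from the motion of $A$ (since a holomorphic motion is injective in the space variable).

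First I would apply Theorem \ref{mss_ext} to extend $\mu$ to a holomorphic motion $\overline\mu$ of $\overline A$, so that in particular the boundary $\d A$ moves holomorphically: for each $a \in \d A$ the function $\lambda \mapsto \overline\mu(\lambda,a)$ is holomorphic, and by injectivity of the motion $\overline\mu(\lambda,\d A)$ is disjoint from $\mu(\lambda,A)$ for every $\lambda$. Next, fix $\lambda_1 \in O$; then $\nu(\lambda_1) \in \mu(\lambda_1,A)$, hence $\nu(\lambda_1)$ is at positive spherical distance from the compact set $\overline\mu(\lambda_1,\d A)$. By Theorem \ref{mss_eq} the family $\{\overline\mu_a : a \in \d A\}$ is equicontinuous, so there is a neighborhood $W$ of $\lambda_1$ on which $\overline\mu(\lambda,\d A)$ stays uniformly close to $\overline\mu(\lambda_1,\d A)$; shrinking $W$ using continuity of $\nu$, we may also assume $\nu(\lambda)$ stays close to $\nu(\lambda_1)$. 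Then for $\lambda \in W$ the point $\nu(\lambda)$ remains bounded away from $\overline\mu(\lambda,\d A)$, i.e. $\nu(\lambda) \notin \overline\mu(\lambda,\d A)$. But for a holomorphic motion the sets $\mu(\lambda,A)$, $\overline\mu(\lambda,\d A)$, and $\overline\C \setminus \overline{\mu(\lambda,A)}$ partition $\overline\C$ (the motion is a homeomorphism onto its image for each fixed $\lambda$ by the $\lambda$-lemma, so it carries $A$, $\d A$, $\overline\C\setminus\overline A$ to sets of the same topological relationship); hence on a possibly smaller connected neighborhood of $\lambda_1$, the point $\nu(\lambda)$ cannot jump from $\mu(\lambda,A)$ to the complement of its closure without crossing $\overline\mu(\lambda,\d A)$. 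This forces $\nu(\lambda) \in \mu(\lambda,A)$ throughout, so $W \subset O$ and $O$ is open.

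For the boundary statement, I would apply Proposition \ref{ec} with the metric space $X = \overline\C$, the equicontinuous family $\Fc = \{\mu_a : a \in A\}$ (equicontinuous by Theorem \ref{mss_eq}), the open set $O$ just shown to be open, and the continuous selector $\nu$ restricted to $O$—which by definition of $O$ satisfies $\nu(\lambda) \in \Fc(\lambda) = \mu(\lambda,A)$ for $\lambda \in O$. Proposition \ref{ec} then gives $\nu(\lambda^*) \in \overline{\mu(\lambda^*,A)}$ for $\lambda^* \in \d O$. To upgrade this to $\nu(\lambda^*) \in \d\mu(\lambda^*,A)$, it remains to exclude $\nu(\lambda^*) \in \mu(\lambda^*, A)$, and this is exactly the openness already established applied at $\lambda^*$: if $\nu(\lambda^*)$ were in the open (in the moving family) set, then $\lambda^*$ would be an interior point of $O$, contradicting $\lambda^* \in \d O$.

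The main obstacle is the openness argument: one must be careful that ``$\nu(\lambda)$ avoids the moving boundary $\overline\mu(\lambda,\d A)$'' really does trap $\nu(\lambda)$ inside $\mu(\lambda,A)$ rather than letting it escape to the complement of the closure. This is where one invokes that for each fixed $\lambda$ the extended motion $\overline\mu(\lambda,\cdot)$ is a homeomorphism of $\overline A$ onto its image (a consequence of the $\lambda$-lemma, which underlies Theorem \ref{mss_ext}), together with connectedness of a small neighborhood of the chosen base point, so that the locally constant ``side'' of $\nu(\lambda)$ relative to $\overline\mu(\lambda,\d A)$ cannot change. Everything else is a routine assembly of equicontinuity and continuity estimates.
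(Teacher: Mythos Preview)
Your argument for the boundary assertion is identical to the paper's: apply Proposition~\ref{ec} to the equicontinuous family $\{\mu_a:a\in A\}$ to get $\nu(\lambda^*)\in\overline{\mu(\lambda^*,A)}$, then use openness of $O$ to exclude the interior.

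For the openness of $O$ the paper takes a different, more local route. Given $\lambda_0\in O$ with $\nu(\lambda_0)=\mu(\lambda_0,a_0)$, it chooses a small loop $\alpha:S^1\to A$ around $a_0$ and tracks the winding number of the curve $t\mapsto\mu(\lambda,\alpha(t))$ about $\nu(\lambda)$: this integer depends continuously on $\lambda$ as long as $\nu(\lambda)$ stays off the moving curve (which equicontinuity guarantees near $\lambda_0$), hence is locally constant; since it is nonzero at $\lambda_0$, it stays nonzero nearby, and then $\nu(\lambda)$ lies in the Jordan domain $\mu(\lambda,D)\subset\mu(\lambda,A)$ bounded by that curve.

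Your global separation argument via the moving boundary $\overline\mu(\lambda,\partial A)$ is a legitimate alternative, but the step ``the side of $\nu(\lambda)$ is locally constant on a connected $W$'' needs one more ingredient than you supply. Knowing that each $\overline\mu(\lambda,\cdot)$ is a homeomorphism of $\overline A$ onto its image, together with connectedness of $W$, does not by itself show that the \emph{inside} set $\{\lambda\in W:\nu(\lambda)\in\mu(\lambda,A)\}$ is open---and that is precisely the openness of $O$ you are after (the \emph{outside} set is easily seen to be open by equicontinuity on the compact $\overline A$, but that only gives the inside set closed). A clean fix is invariance of domain applied to the continuous injection $(\lambda,a)\mapsto(\lambda,\mu(\lambda,a))$ on the open set $\Lambda\times A$ (joint continuity coming from the $\lambda$-lemma): this makes $\{(\lambda,z):z\in\mu(\lambda,A)\}$ open in $\Lambda\times\overline\C$, and then $O$ is open as the preimage under $\lambda\mapsto(\lambda,\nu(\lambda))$. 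Alternatively, localizing to a small disk around $a_0$ recovers the paper's winding-number argument. Either way the gap closes easily, but as written your ``cannot jump'' claim is asserted rather than proved.
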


\begin{proof}
  Consider a point $\lambda_0\in O$.
  Then $\nu(\lambda_0)=\mu(\lambda_0,a_0)$ for some $a_0\in A$.
  Assume that $\nu(\lambda_0)\ne\infty$.
  Let $\alpha:S^1\to A$ be a small enough loop around $a_0$.
  In particular, $\mu_{\lambda_0}$ is a holomorphic function on the disk bounded by $\alpha(S^1)$.
  Then we have
  $$
  I(\lambda_0)=\int_{S^1}\frac{d\mu(\lambda_0,\alpha(t))}{\mu(\lambda_0,\alpha(t))-\nu(\lambda_0)}=2\pi i.
  $$
  Set $\eps$ to be the minimal spherical distance between $\nu(\lambda_0)$ and $\mu(\lambda_0,\alpha(t))$.
  There is an open neighborhood $V$ of $\lambda_0$ such that the functions $\mu_\lambda$, $\lambda\in V$, are holomorphic on
  the disk bounded by $\alpha(S^1)$, and the
  distance between $\nu(\lambda)$ and $\mu(\lambda,\alpha(t))$ is bigger than $\eps/2$ for
  all $\lambda\in V$ and all $t\in S^1$.
  This follows from the continuity of $\nu$ and equicontinuity of $\mu(\cdot,a)$, $a\in A$.
  Then the integral
  $$
  I(\lambda)=\int\frac{d\mu(\lambda,\alpha(t))}{\mu(\lambda,\alpha(t))-\nu(\lambda)}
  $$
  is well-defined and continuous function on $V$.
  Since the possible values of this integral are discrete, we must have $I(\lambda)=2\pi i$
  for all $\lambda\in V$.
  Therefore, $\nu(\lambda)\in\mu(\lambda,A)$ for such $\lambda$, and $V\subset O$.
  Thus we proved that $O$ is open.

  Suppose now that $\lambda^*\in\d O$.
  Then $\nu(\lambda^*)\in\overline{\mu(\lambda^*,A)}$ by Proposition \ref{ec}.
  On the other hand, we have $\nu(\lambda^*)\not\in\mu(\lambda^*,A)$ because $\lambda^*\not\in O$.
  Therefore, $\nu(\lambda^*)\in\d\mu(\lambda,A)$.
\end{proof}

\subsection{Parameter curves}
Quadratic rational functions that are conjugate by a M\"obius transformation have the same dynamical properties.
Therefore, one wants to parameterize conjugacy classes, choosing one (or finitely many) particular representative(s)
from every conjugacy class.
There are many different ways to do this parameterization, see e.g. \cite{Milnor-QuadRat,ReesV3}.
For our purposes, it will be convenient to do the following: send the two critical points of
a rational function to $0$ and $\infty$ by a suitable M\"obius transformation.
If $\infty$ is fixed, then we can reduce $f$ to the form $p_c:z\mapsto z^2+c$.
If $\infty$ is not fixed, then we can send a preimage of $\infty$ to 1, thus $f$ will have the form
$$
R_{a,b}(z)=\frac{az^2-b}{z^2-1}.
$$
In any case, we can assume that $f$ is either $p_c$ or $R_{a,b}$.

We will now consider the following algebraic curves in $\C^2$:
$$
\Vc_k=\{(a,b)\ |\ R_{a,b}^{\circ\, k-1}(\infty)=1\},\quad k=2,3,\dots
$$
These are complex one-dimensional slices of the
parameter space of quadratic rational functions.
These slices correspond to simple (periodic) types of behavior of one critical point
(note that $R_{a,b}(1)=\infty$ so that for all $(a,b)\in\Vc_k$, the critical point $\infty$ of
the function $R_{a,b}$ is periodic of period $k$).

We will identify pairs $(a,b)\in\Vc_k$ with the corresponding rational functions $R_{a,b}$.
For every $(a,b)\in\Vc_k$, let $\Omega_{a,b}$ denote the immediate basin of
the super-attracting fixed point $\infty$ of the rational function $R_{a,b}^{\circ k}$.
Define the set
$$
\Bc_k=\{(a,b)\in\Vc_k\ |\ 0\in R^{\circ m}_{a,b}(\Omega_{a,b}),\ m\ge 0\}.
$$
This set consists of all parameter values such that the critical point $0$
is in the immediate basin of the cycle of $\infty$.
Define the set
$$
\Lambda_k=\Vc_k-\overline{\Bc_k}.
$$
This is a one-dimensional complex manifold (for smoothness, see e.g. \cite{Stimson,Rees_b}).

Recall that a function $R_{a,b}\in\Lambda_k$ is hyperbolic of type C if
$R_{a,b}^{\circ m}(0)\in\Omega_{a,b}$ for some $m>0$.
The set of hyperbolic type C functions is open by Proposition \ref{dA}.

\subsection{Notation needed for the proof of Theorem \ref{typeC}}
\label{s_notation}
We will use the following notation throughout the proof of Theorem \ref{typeC}:
Let $H\subset\Lambda_k$ be a hyperbolic component of type C, and $f\in\d H$.
  Note that the boundary is taken in $\Lambda_k$, so that the boundaries of type B components
  are automatically excluded.
  Set $\Omega=\Omega_f$.
  Also, let $h$ be the center of the hyperbolic component $H$, i.e. the unique
  critically finite map in $H$.
There is a positive integer $k'$ such that for any parameter value $(a,b)\in H$,
  we have $R_{a,b}^{\circ k'}(0)\in\Omega_{a,b}$, and $k'$ is the minimal integer with this
  property.

Let $\Delta$ denote the unit disk $\{|z|<1\}$.
There is a holomorphic motion
$$
\mu:\Lambda_k\times\Delta\to\overline\C
$$
such that $\mu((a,b),z)$ is the point in $\Omega_{a,b}$, whose B\"ottcher coordinate is equal to $z$.
By Theorem \ref{mss_ext}, this holomorphic motion extends to a holomorphic motion
$$
\overline\mu:\Lambda_k\times\overline\Delta\to\overline\C
$$
By Proposition \ref{dA}, we have $f^{\circ k'}(0)\in\d\Omega$.
Therefore, $f(0)$ is on the boundary of some Fatou component $V$ containing a point $v$
such that $f^{\circ k'-1}(v)=\infty$.
Note that the particular choice of $V$ depends on $H$, not only on $f$
(there may be several possible choices if $f$ belongs to the boundaries of several type C components).

\subsection{Accessibility and non-recurrence}
The following Proposition shows that the Fatou components of $f$ do not
have topological pathologies (cf. \cite{A-Y}):

\begin{proposition}
\label{dlc}
  The boundary of $\Omega$ is locally connected.
  In particular, every boundary point is accessible from $\Omega$.
\end{proposition}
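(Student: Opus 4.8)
The plan is to exploit the fact that $f$, being a boundary point of a type C hyperbolic component but not of a type B component, has no recurrent critical point on $\partial\Omega$, and to run the standard argument that the immediate basin of a (super)attracting cycle with non-recurrent critical orbit has locally connected boundary. First I would pass to the first-return map $F = f^{\circ k}$ on the immediate basin component containing $\infty$; then $\infty$ is a super-attracting fixed point of $F$ and $\Omega$ is its immediate basin. The B\"ottcher coordinate gives a conformal isomorphism $\phi_f:\Omega\to\Delta$ (or to $\overline\C-\overline\Delta$, depending on the normalization used earlier for $\overline\mu$) conjugating $F|_\Omega$ to $z\mapsto z^{d}$ for the appropriate local degree $d\ge 2$. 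Local connectivity of $\partial\Omega$ is then equivalent, by Carath\'eodory's theorem, to the statement that $\phi_f^{-1}$ extends continuously to $\partial\Delta$, i.e. that all internal rays of $\Omega$ land.

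The key step is to show that the critical orbit of $f$ does not recur to $\partial\Omega$, and more precisely that $\partial\Omega$ contains no critical point of $f$ whose forward orbit accumulates on $\partial\Omega$. Here is where the hypotheses enter: the periodic critical point of $f$ (the one landing at $\infty$ after normalization) lies in $\Omega$, not on its boundary; the other, ``free'' critical point $0$ satisfies $f^{\circ k'}(0)\in\partial\Omega$ by Proposition \ref{dA}, but because $f$ is \emph{not} on the boundary of a type B component, the free critical point is genuinely captured and its orbit is eventually \emph{strictly preperiodic} relative to the cycle of Fatou components of $\Omega$ — it is not recurrent. Concretely, $f^{\circ k'}(0)$ lands at a well-defined repelling (pre)periodic point on $\partial\Omega$, and from that point on the orbit is the orbit of a repelling (pre)periodic point, hence finite. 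Thus $F|_{\partial\Omega}$ is eventually expanding along the critical orbit in the hyperbolic metric of a suitable neighborhood.

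With non-recurrence in hand, I would invoke the classical landing criterion (as in the proof that Siegel/Cremer-free quadratic Julia sets are locally connected, or Milnor's treatment, \cite{Milnor}): using the Poincar\'e metric on $\Omega$ minus the (finite) post-critical set, one shows that the pullbacks of a fixed small disk under branches of $F^{-1}$ shrink uniformly, so that every internal ray lands and the landing point depends continuously on the angle; equivalently $\phi_f^{-1}$ extends continuously to $\overline\Delta$. This yields local connectivity of $\partial\Omega$, and the ``in particular'' clause about accessibility is then immediate: a continuous surjection $\overline\Delta\to\overline\Omega$ extending $\phi_f^{-1}$ realizes every boundary point as the landing point of an internal ray, hence as an accessible point.

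The main obstacle will be the second paragraph: carefully justifying that the free critical orbit is non-recurrent on $\partial\Omega$ using only the exclusion of type B boundary points, and handling the fact that $0$ itself may sit on $\partial\Omega$ (or be pulled back there) in the capture case, so that one must work on $\partial\Omega$ with the first-return dynamics and track how the critical point interacts with the cycle of boundary arcs. Once the finiteness (or at least uniform hyperbolicity) of the relevant post-critical set on $\partial\Omega$ is established, the shrinking-of-pullbacks argument is routine and follows the references \cite{A-Y,Milnor} closely.
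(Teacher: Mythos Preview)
Your plan takes a completely different route from the paper and contains a genuine gap.

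The paper's proof is a one-liner using the machinery already set up in Section~\ref{s_notation}: the B\"ottcher coordinate defines a holomorphic motion $\mu:\Lambda_k\times\Delta\to\overline\C$ of the immediate basin, and by the $\lambda$-lemma (Theorem~\ref{mss_ext}) this extends to a holomorphic motion $\overline\mu$ of $\overline\Delta$ with each time-$\lambda$ map quasi-symmetric. In particular $\overline\mu_f:\overline\Delta\to\overline\Omega$ is a homeomorphism, so $\partial\Omega$ is a Jordan curve and hence locally connected. No dynamics on $\partial\Omega$, no non-recurrence, no shrinking of pullbacks is needed; the result holds for \emph{every} $f\in\Lambda_k$, not just for boundaries of type~C components.

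Your approach, by contrast, tries to earn local connectivity dynamically, and the key step fails. You assert that ``$f^{\circ k'}(0)$ lands at a well-defined repelling (pre)periodic point on $\partial\Omega$, and from that point on the orbit is \dots\ finite.'' This is not justified and is generally false: for a generic $f$ on $\partial H$ the free critical orbit on $\partial\Omega$ is infinite and not eventually periodic. What is true (and what the paper proves as Proposition~\ref{non-recurrent}, \emph{after} Proposition~\ref{dlc}) is only that $0$ is non-recurrent. Non-recurrence is strictly weaker than preperiodicity, and even establishing non-recurrence by your route would be circular in the paper's logical order, since the paper's proof of non-recurrence already uses the extended motion $\overline\mu$ on $\partial\Delta$.

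If you insist on a dynamical proof, you would need to (i) prove non-recurrence independently of $\overline\mu$, and (ii) run a genuine Ma\~n\'e--Yoccoz shrinking argument under the non-recurrence hypothesis rather than the much stronger finite-postcritical-set hypothesis you invoke. Both are substantial detours compared to the holomorphic-motion argument, which you have overlooked despite its being set up explicitly just before the proposition.
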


\begin{proof}
  Let $\lambda_0$ be the parameter value corresponding to $f$.
  Then the function $\overline\mu_f:z\mapsto\overline\mu(\lambda_0,z)$ is a quasi-symmetric
  homeomorphism between $\overline\Delta$ and $\overline\Omega$, by Theorem \ref{mss_ext}.
  It follows that the boundary of $\Omega$ is simply connected.
\end{proof}

Since $V$ is a pullback of $\Omega$, the boundary of $V$ is also simply connected.
In particular, $f(0)\in\d V$ is accessible from $V$.
The following proposition shows that the dynamical properties of $f$ are rather simple:

\begin{proposition}
\label{non-recurrent}
  The critical point $0$ of $f$ is non-recurrent.
\end{proposition}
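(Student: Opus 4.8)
The plan is to argue by contradiction: assume the critical point $0$ of $f$ is recurrent, i.e. $0$ is an accumulation point of its own forward orbit $\{f^{\circ n}(0)\}_{n\ge 0}$. Since $f\in\d H$ and $f$ is not on the boundary of a type B component, the critical point $\infty$ is still $k$-periodic and super-attracting for $f^{\circ k}$, so the immediate basin $\Omega$ and its cycle are well-defined; by Proposition \ref{dA} we have $f^{\circ k'}(0)\in\d\Omega$, so the orbit of $0$ eventually reaches $\d\Omega$ and stays in the (closure of the) grand orbit of the cycle of $\Omega$. The recurrence of $0$ therefore forces the orbit of $0$ to return arbitrarily close to $0$ while living near the Julia set. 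I would first record the structural consequence of Proposition \ref{dlc}: since $\overline\mu_f=\overline\mu(\lambda_0,\cdot)$ is a quasi-symmetric homeomorphism from $\overline\Delta$ onto $\overline\Omega$, the boundary $\d\Omega$ is a Jordan curve, every point of $\d\Omega$ is accessible, and $f^{\circ k}|_{\d\Omega}$ is topologically conjugate to the doubling map $z\mapsto z^2$ on $S^1$.

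The heart of the argument is to show recurrence is incompatible with this picture. I would pull the dynamics back to the circle: let $\psi=\overline\mu_f|_{S^1}\colon S^1\to\d\Omega$ be the boundary conjugacy, so that $f^{\circ k}$ on $\d\Omega$ becomes $m_2\colon\theta\mapsto 2\theta$. The point $f^{\circ k'}(0)\in\d\Omega$ corresponds to some angle $\theta_0\in S^1$. Now $f^{\circ k'}(0)$ is a critical value of $f^{\circ k'}$ restricted to a neighborhood of $0$ — more precisely, $f(0)\in\d V$ where $V$ is a pullback of $\Omega$, and $f^{\circ k'-1}\colon V\to\Omega$ is a homeomorphism onto $\Omega$ carrying $f(0)$ to $f^{\circ k'}(0)$. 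If $0$ were recurrent, then $0$ itself would be accumulated by $\{f^{\circ n}(0)\}$, and applying $f^{\circ k'}$, the point $f^{\circ k'}(0)\in\d\Omega$ would be accumulated by later points of the orbit lying on $\d\Omega$ (or on pullbacks of $\d\Omega$ that accumulate on it). Under $\psi^{-1}$ this says the angle $\theta_0$ is recurrent under the doubling map in a strong sense — in fact the orbit of $0$ returning near $0$ means $\theta_0$ is accumulated by angles $\theta_n$ with $2^{j_n}\theta_n=\theta_0$ for suitable $j_n$, i.e. $\theta_0$ is accumulated by its own preimages under iterates of $m_2$; combined with the fact that the critical value angle is, by the standard combinatorics of the basin, the angle of an actual critical point on the circle model, one derives that $0$ must be periodic. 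But a periodic critical point $0$ would mean $f$ lies in the interior of a hyperbolic component (type B, C or D), contradicting $f\in\d H$ with all periodic points in $\C$ repelling for boundary maps — in particular $0$ periodic would force a non-repelling cycle. This contradiction completes the proof.

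The step I expect to be the main obstacle is making the ``return of $0$ near $0$ forces $\theta_0$ eventually periodic'' implication rigorous. The subtlety is that the forward orbit of $0$ need not lie on $\d\Omega$ itself — it lies on the grand orbit of the $\Omega$-cycle, i.e. on countably many pullback Jordan curves $f^{-n}(\d\Omega)$, which can accumulate on $\d\Omega$ and on $0$ in complicated ways. To handle this cleanly I would invoke the quasi-symmetric holomorphic-motion structure from Theorem \ref{mss_ext} uniformly: the motion $\overline\mu$ extends over all of $\Lambda_k$, so the whole grand orbit of $\Omega$ moves by a holomorphic motion, hence by Theorem \ref{mss_eq} the family of boundary parametrizations is equicontinuous, which gives quantitative control on the shrinking of deep pullback components near $0$. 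Using this, a neighborhood of $0$ disjoint from $\Im\Ac$-type pathologies is uniformized, and the recurrence of $0$ transfers to recurrence of a single angle $\theta_0$ under $m_2$ together with the constraint that $\theta_0$ is the critical-value angle; the only way a one-sided-preimage-recurrent point in the doubling map can also be the image of a critical angle is to be preperiodic, and preperiodicity of $0$ here again contradicts $f$ having only repelling cycles in $\C$ (a preperiodic critical point for a boundary-of-hyperbolic map would actually put $f$ on the boundary, not obviously excluded — so the cleaner dichotomy is: recurrent $\Rightarrow$ periodic $\Rightarrow$ non-repelling cycle $\Rightarrow$ contradiction). I would state the combinatorial lemma about the doubling map separately and prove it by elementary symbolic dynamics.
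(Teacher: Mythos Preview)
Your approach has a genuine gap: the implication ``$0$ recurrent $\Rightarrow$ $0$ (pre)periodic'' via symbolic dynamics of the doubling map simply does not hold. Recurrent points for $m_2$ on $S^1$ are generic and are almost never periodic or preperiodic; there is no combinatorial lemma of the kind you sketch, and you yourself note that even preperiodicity would not obviously contradict $f\in\d H$. Your setup is also tangled in a way that hides the right first step: once $f^{\circ k'}(0)\in\d\Omega$, the forward orbit of $0$ lives in the \emph{forward} images $\bigcup_{l=0}^{k-1}f^{\circ l}(\d\Omega)$, a finite union of compact sets, not in the countable family of pullbacks $f^{-n}(\d\Omega)$. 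From this one immediately gets that if $0$ is recurrent then $0$ itself lies in this closed set, i.e.\ $0=f^{\circ l}(\overline\mu(\lambda_0,z))$ for some $l\in\{0,\dots,k-1\}$ and some $z\in S^1$. You never reach this conclusion, and without it the circle combinatorics has no anchor.

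The paper derives the contradiction from this last equation by a parameter-space perturbation rather than by any dynamical argument in the plane of $f$. Consider the holomorphic function $\nu(\lambda)=R_\lambda^{\circ l}(\overline\mu(\lambda,z))$ on $\Lambda_k$: it vanishes at $\lambda_0$ but is not identically zero (for instance at the center $h\in H$ the critical point is not on $\d\Omega_h$). Choose a small loop $\gamma$ around $\lambda_0$ on which $\nu\ne 0$ and along which $\nu$ has nonzero winding number about $0$. Now perturb $z$ to a nearby point $\tilde z$ in the \emph{open} disk $\Delta$; by equicontinuity of the holomorphic motion the function $\tilde\nu(\lambda)=R_\lambda^{\circ l}(\overline\mu(\lambda,\tilde z))$ is uniformly close to $\nu$ on $\gamma$, hence has the same winding number and therefore a zero at some $\lambda_1$ inside $\gamma$. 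But $\tilde\nu(\lambda_1)=0$ says $0\in R_{\lambda_1}^{\circ l}(\Omega_{\lambda_1})$, i.e.\ $\lambda_1\in\Bc_k$, contradicting $\lambda_0\in\Lambda_k=\Vc_k-\overline{\Bc_k}$. The key idea you are missing is to use the very definition of $\Lambda_k$ (which already excludes a neighborhood of type~B parameters) together with the holomorphic motion, rather than seeking a contradiction inside a single dynamical plane.
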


\begin{proof}
  Note that all limit points of the orbit of $0$ belong to the forward orbit of $\d\Omega$ under $f$.
  Therefore, if $0$ is recurrent, then $0=f^{\circ l}(\d\Omega)$ for some $l=0,\dots,k-1$.
  In other terms, $0=f^{\circ l}(\overline\mu(\lambda_0,z))$, where $z$ is a point on the unit circle,
  and $\lambda_0$ is the parameter value corresponding to $f$.

  Consider the holomorphic function $\nu(\lambda)=R_\lambda^{\circ l}(\overline\mu(\lambda,z))$ over $\Lambda_k$.
  This function vanishes at the point $\lambda_0$.
  However, $\nu$ is not identically equal to zero, because e.g. for $h\in H$, the critical point is not in $\d\Omega_h$.
  Therefore, we can choose a small loop $\gamma:S^1\to\Lambda_k$ around $\lambda_0$ such that $\nu\circ\gamma$ loops around $0$.
  Take $\tilde z \in\Delta$ sufficiently close to $z$.
  Then the function $\tilde\nu(\lambda)=R_\lambda^{\circ l}(\overline\mu(\lambda,\tilde z))$ is
  uniformly close to $\nu$.
  In particular, $\tilde\nu\circ\gamma$ loops around $0$.
  Therefore, there exists a parameter value $\lambda_1$ inside $\gamma$ such that $\tilde\nu(\lambda_1)=0$.
  This means that $0$ lies in $R_{\lambda_1}^{\circ l}(\Omega_{\lambda_1})$, i.e. $\lambda_1\in\Bc_k$, a contradiction.
\end{proof}

\subsection{Restatement of Theorem \ref{typeC}}
\label{s_restate}
In this section, we will restate Theorem \ref{typeC}, and give
some details on the particular set of paths that was mentioned but not
defined in the statement of Theorem \ref{typeC}.
We also need to introduce some more notation.
Consider a simple path $\alpha_{-1}:[0,1]\to\overline V$ such that
$\alpha_{-1}(0)=f(0)$, $\alpha_{-1}(1)=v$, and $\alpha_{-1}(0,1]\subset V$.
The existence of such path follows from Proposition \ref{dlc}.
There is an $\alpha$-path $\alpha_0:S^1\to\overline\C$ such that
$f\circ\alpha_0(t_1,t_2)=\alpha_{-1}(|t_1|)$.
Let $\Ac$ be the set of all pullbacks of $\alpha_0$ under iterates of $f$, including $\alpha_0$.

\begin{proposition}
  The set of paths $\Ac$ is contracted.
\end{proposition}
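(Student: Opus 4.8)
The plan is to show that the set $\Ac$ of all pullbacks of $\alpha_0$ is contracted by controlling the diameters of the paths via the hyperbolic (Poincaré) metric and the non-recurrence of the critical point. The path $\alpha_0$ lies in $\overline\Omega$ (being a pullback of $\alpha_{-1}\subset\overline V$, and $V$ itself a pullback of $\Omega$), so its further pullbacks under iterates of $f$ are either in the grand orbit of $\Omega$ or traverse Fatou components that are pullbacks of $\Omega$ along a controlled itinerary. I would first reduce the problem to showing that for every $\eps>0$, only finitely many pullbacks of $\alpha_0$ have spherical diameter exceeding $\eps$; by Proposition~\ref{contracted}, it suffices to show that for every open cover, all but finitely many pullbacks lie in a single cover element.

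\textbf{Step 1: Locate the paths in the grand orbit of $\Omega$.} Each path $\alpha\in\Ac$ is an $\alpha$-path whose image $\alpha(S^1)$ is the union of two arcs in the closure of some Fatou component $W$ that is a pullback of $\Omega$ under some iterate $f^{\circ n}$, meeting at the point $\alpha(0,1)$ on $\partial W$. Using the extended holomorphic motion $\overline\mu:\Lambda_k\times\overline\Delta\to\overline\C$ together with Proposition~\ref{dlc} (local connectivity of $\partial\Omega$, hence of each $\partial W$), I would parameterize these arcs by the Böttcher-type coordinate inherited from $\Omega$, so that the combinatorial position of each path is encoded by a pair of angles together with the itinerary of the Fatou component $W$ in the dynamical tree.

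\textbf{Step 2: Diameter control via the Poincaré metric.} The key analytic input is that pullbacks of a fixed compact arc shrink. Concretely, the path $\alpha_0$ is eventually mapped out of $\Omega$ only finitely-to-one, and its pullbacks accumulate only on the Julia set; since $0$ is non-recurrent (Proposition~\ref{non-recurrent}) and all periodic points are repelling, the postcritical set stays a definite hyperbolic distance from the Julia set along the relevant branches. This gives uniform contraction of the branches of $f^{-1}$ in the Poincaré metric of the complement of the postcritical set (a Schwarz-lemma / shrinking-lemma argument, as used e.g. in \cite{Milnor}). Therefore the hyperbolic, hence spherical, diameters of the $n$-th preimages of $\alpha_0(S^1)$ tend to $0$ uniformly as $n\to\infty$.

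\textbf{Step 3: Conclude the contracted property.} Fix $\eps>0$. By Step~2, there is $N$ such that every pullback of $\alpha_0$ of level $n\ge N$ has diameter $<\eps$. Since $f$ has degree $2$, there are only finitely many ($\le 2+4+\dots+2^{N-1}$) pullbacks of level $<N$, so only finitely many elements of $\Ac$ have diameter $\ge\eps$. Hence $\Ac$ is contracted. \textbf{The main obstacle} will be Step~2: making rigorous the uniform shrinking of the preimage arcs near the Julia set, since $\alpha_0(S^1)$ touches $\partial V$ at $f(0)$, so the relevant domain for the Schwarz lemma degenerates there; one must exploit accessibility of $f(0)$ from $V$ and the non-recurrence of $0$ to keep the critical orbit away from the accumulation set of $\Ac$ for all but finitely many backward iterates, and handle the finitely many levels where a preimage passes near the critical point separately (invoking the strong $f$-stability, so that such passages occur only at $\alpha(0,1)$ with a simple critical point, where $j$-type local behavior still gives bounded distortion of diameters).
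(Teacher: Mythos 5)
Your core idea---non-recurrence of $0$ gives a post-critically-free neighborhood of $\alpha_0(S^1)$, and a Koebe/shrinking-lemma argument then forces the family of pullbacks to be contracted---is exactly the paper's proof, which is four lines long: take a neighborhood $U$ of $\alpha_0(S^1)$ disjoint from the post-critical set, shrink to $U'\Subset U$, and apply Koebe distortion to the univalent inverse branches on $U$. But the ``main obstacle'' you flag at the end is not an obstacle, and it rests on a geographic mix-up. You say $\alpha_0(S^1)$ touches $\partial V$ at $f(0)$, so the Schwarz-lemma domain degenerates there. In fact $\alpha_0$ is defined by $f\circ\alpha_0(t_1,t_2)=\alpha_{-1}(|t_1|)$, so it is the \emph{image} curve $\alpha_{-1}$ that meets $\partial V$ at $f(0)$; the curve $\alpha_0(S^1)$ itself meets the Julia set only at the critical point $0\in\partial W_1\cap\partial W_2$, where $W_1,W_2$ are the two Fatou components of $f^{-1}(V)$. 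More to the point, the relevant domain for the shrinking argument is $S^2-P_f$ (the complement of the post-critical set), not $S^2-J_f$. The forward orbit $\{f^{\circ n}(0)\}_{n\ge 1}$ lies entirely in $J_f$ (it begins at $f(0)\in\partial V$), so it misses the Fatou-arc part of $\alpha_0(S^1)\subset W_1\cup W_2$; the cycle of $\infty$ sits inside the \emph{periodic} Fatou components and so is disjoint from $\overline{W_1}\cup\overline{W_2}$; and $0\notin P_f$ because $0$ is non-recurrent (Proposition \ref{non-recurrent}) and non-periodic. Hence the compact set $\alpha_0(S^1)$ lies at positive distance from the closed set $P_f$, and nothing degenerates.

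Two further remarks. Your Step 1 (B\"ottcher coordinates and combinatorial itineraries) plays no role in the argument and can be dropped. Your concern about ``finitely many levels where a preimage passes near the critical point'' is also moot: for $n\ge 1$ one has $f^{\circ n}(0)\in J_f-\{0\}$ while $\alpha_0(S^1)\cap J_f=\{0\}$, so no pullback of $\alpha_0$ other than $\alpha_0$ itself passes through $0$, and once $U\cap P_f=\emptyset$ every branch of $f^{-n}$ on $U$ is automatically univalent. You do not need to invoke strong $f$-stability or local $j$-type behavior here.
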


\begin{proof}
  Since the critical point $0$ is non-recurrent, there exists a neighborhood $U$ of $\alpha_0(S^1)$
  that does not intersect the post-critical set.
  Choose a smaller neighborhood $U'$ of $\alpha_0(S^1)$ that is compactly contained in $U$.
  By the Koebe distortion theorem, the set of pullbacks of $U'$ is contracted.
  It follows that $\Ac$ is contracted.
\end{proof}

We can now conclude that there exists a regluing $\Phi$ of $\Ac$ into some
contracted set of disjoint $\beta$-paths.
Moreover, the function $g=\Phi\circ f\circ\Phi^{-1}$ is well-defined as a topological model.
We can now give a precise statement, from which Theorem \ref{typeC} follows:

\begin{theorem}
\label{reg_f}
  The map $g$ is equivalent as a topological model to the
  critically finite hyperbolic rational function $h\in H$.
\end{theorem}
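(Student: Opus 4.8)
The goal is to produce a homeomorphism $\psi$ from the dynamical sphere of $g$ onto the Riemann sphere of $h$ that conjugates $g$ to $h$, carries the Fatou set of $g$ onto that of $h$, and is holomorphic there; this is exactly the assertion that $g$ and $h$ are equivalent as topological models. The construction runs parallel to the proof of Theorem \ref{troqp}, but is substantially harder because, unlike in that situation, the dynamics of the reglued map $g$ is not known in advance and must first be determined.

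The first, and I expect hardest, step is to pin down the combinatorics of $g$. The regluing $\Phi$ leaves the cycle of $\infty$ alone, so $g$ retains a $k$-periodic superattracting critical point together with its B\"ottcher coordinate. The other critical point of $g$ is the point $q$ into which the two endpoints $\alpha_0(\pm1,0)$ of the arc $\alpha_0(S^1)$ are reglued (there the two half-disks about $q$ both map onto one region, so $g$ has local degree $2$), and the crucial claim is that the forward orbit of $q$ is finite and ends on the cycle of $\infty$. Heuristically, regluing the pullbacks $\Ac$ of $\alpha_0$ straightens the arc $\alpha_{-1}$ running from the critical value $f(0)$ to the precritical point $v\in V$, so that in $g$ the critical value sits at the $\Phi$-image of $v$; since $f^{\circ k'-1}(v)=\infty$, it follows that $g^{\circ k'}(q)$ lies on the cycle of $\infty$, so that $g$ is critically finite of type C with the same period $k$ and the same capture time $k'$ as $h$ --- indeed with exactly the combinatorics of $h$, since the choice of $V$, hence of $\alpha_{-1}$, is dictated by $H$. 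I would establish this by a local analysis of the continuous extension of $\Phi\circ f\circ\Phi^{-1}$ along $\alpha_0$ and its iterated $f$-images, using Theorem \ref{reg_rc} for the covering structure of $g$ and using the non-recurrence of $0$ (Proposition \ref{non-recurrent}) together with the structure of the Fatou set of $f$ to show that the finitely many points in the relevant critical orbit are not accumulation points of $\Ac$, so that $\Phi$ restricts to a local homeomorphism near each of them and the critical orbit of $g$ can be read off directly from that of $f$.

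With the combinatorics in hand, I would build $\psi$ in two stages. On the iterated preimages of the superattracting basin, $\psi$ is the B\"ottcher conjugacy; since $g$ is holomorphic and critically finite on its Fatou set, this propagates to a holomorphic conjugacy between the full Fatou sets of $g$ and $h$. To extend $\psi$ continuously across the Julia set, I would invoke the combinatorial description of the critically finite hyperbolic map $h$ from \cite{Rees_description}: points of the Julia set are separated by their itineraries relative to a suitable dynamically defined partition, and one checks that the itineraries of points of the Julia set of $g$, transported back to $f$ by $\Phi^{-1}$ (an honest conjugacy off $\Im\Ac$), coincide with those of the corresponding points for $h$, so that every point of the Julia set of $g$ has a unique image and distinct points get distinct images. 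The delicate point here --- where I would import the analytic-continuation technique of \cite{A-Y}, together with the holomorphic-motion and equicontinuity tools developed earlier in this section --- is to show that this partial conjugacy is consistent under the dynamics across the reglued curves and hence assembles into a genuine homeomorphism. Granting that, checking that $\psi$ conjugates $g$ to $h$ and is holomorphic on the Fatou set is routine, and the equivalence of topological models follows. Thus the whole weight of the argument rests on the first step together with this gluing, i.e.\ on verifying that regluing really does transform the non-captured critical orbit of $f$ into the captured critical orbit of the center $h$ of $H$.
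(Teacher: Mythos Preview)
Your first step is right and matches the paper: the critical value of $g$ is $\Phi(v)$, its forward orbit is the $\Phi$-image of the forward orbit of $v$ under $f$, and hence $g$ is critically finite with the same combinatorics as $h$. This is a one-line observation in the paper (beginning of Section~\ref{s_Th_eq}), not the hard part.

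The real gap is in your second stage. You want to extend the B\"ottcher conjugacy across $J_g$ by matching itineraries, arguing that distinct points of $J_g$ have distinct itineraries. But $g$ is only an abstract topological ramified covering; there is no Poincar\'e metric, no hyperbolic expansion, no a~priori reason why itineraries should separate points of $J_g$. In the polynomial proof of Theorem~\ref{troqp} this worked because both sides were genuine rational maps and the Poincar\'e-distance argument applied to each; here only $h$ is rational. Transporting back to $f$ via $\Phi^{-1}$ does not help, since $J_g$ meets $\Im\Bc$ densely and $\Phi^{-1}$ is two-valued there. Nor does the analytic-continuation idea from \cite{A-Y} address this: in the paper that argument is used once, to prove non-recurrence of the free critical point of $f$ (Proposition~\ref{non-recurrent}), not to build the conjugacy.

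The paper's route is quite different and supplies exactly the missing expansion. First one shows $g$ is \emph{Thurston equivalent} to $h$: deform $f$ to $h$ along a path $f_t$ in $\overline{H}$, reglue each $f_t$ along a single moving arc $\alpha_t$ to get partially defined maps $q_t$, approximate these by honest Thurston maps $\hat q_t$, and apply Theorem~\ref{def-teq}; a separate cut-and-paste lemma (Theorem~\ref{cut-teq}) identifies $\hat q_0$ with $g$. Second, Rees's theorem (Theorem~\ref{semi-conj}) upgrades Thurston equivalence to a genuine semi-conjugacy $\phi$ from $g$ onto $h$, using the hyperbolic contraction of $h$ (not of $g$); the fibers of $\phi$ are connected (Proposition~\ref{connected}). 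Third, Ma\~n\'e's backward-stability theorem for $f$ is transferred through the regluing to give backward stability of $g$ on $J_g$ (Proposition~\ref{bs}). Finally, a short combinatorial argument (Propositions~\ref{fibinj}--\ref{closeim}) uses backward stability to force every fiber of $\phi$ to be a point. The Thurston-equivalence step and the transfer of Ma\~n\'e's theorem are the two ideas your outline lacks.
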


Theorem \ref{reg_f} implies Theorem \ref{typeC}.
Indeed, if $g=\Phi\circ f\circ\Phi^{-1}$ is conjugate to $h$, then $h=\Psi\circ f\circ\Psi^{-1}$
for some topological regluing $\Psi$.
It follows that $f=\Psi^{-1}\circ h\circ\Psi$.
Note that $\Psi^{-1}$ is also a regluing.

Thus it remains to prove Theorem \ref{reg_f}.
We first prove that $g$ is Thurston equivalent to $h$.
By a theorem of Mary Rees \cite{Rees_description}, Thurston equivalence to a hyperbolic
rational function implies semi-conjugacy.
We will recall the proof of this theorem.
What remains is to prove that all fibers are trivial.
Several ideas for this part were taken from \cite{Rees_description}.
Overall, the argument is rather simple, but we need to know from the very beginning that $g$
is well-defined as a topological ramified covering.
Here we are using results of Section \ref{s_moore} on the existence of topological regluing.

\section{The proof of Theorem \ref{typeC}}
\label{s_proof}

{\footnotesize
In this section, we prove Theorem \ref{typeC}.
In several important places, the argument is inspired by description \cite{Rees_description} of topological models for
hyperbolic quadratic rational functions.
We use notation introduced in Sections \ref{s_restate} and \ref{s_notation}.
}

\subsection{Backward stability}
Recall the following theorem of Ma\~ne \cite{Mane,TanShi}:

\begin{theorem}[Backward stability]
 \label{mane}
 Let $F$ be a rational function with the Julia set $J$.
 Take any $\eps>0$.
 Then for every $x\in J$ that is not a parabolic periodic point, and that is not in the $\omega$-limit
 set of a recurrent critical point, there exists a neighborhood $U$ of $x$ such that
 \begin{enumerate}
   \item for all $n$, the diameter of any component of $F^{-n}(U)$ does not
   exceed $\eps$ in the spherical metric,
   \item for every $\eps'>0$, there exists a positive integer $n_0$ such that
   for $n>n_0$, every component of $F^{-n}(U)$ has diameter $\le\eps'$.
 \end{enumerate}
\end{theorem}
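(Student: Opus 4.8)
The plan is to follow Ma\~ne's argument; this statement is the ``shrinking of pullbacks'' half of the main theorem of \cite{Mane}, with a streamlined proof in \cite{TanShi}, so I only indicate the structure. Write $P$ for the postcritical set of $F$ — the closure of the union of the forward orbits of the critical values — and $P_0=\bigcup_c\omega(c)$ for the union of the $\omega$-limit sets of the recurrent critical points $c$; both are closed and forward invariant, $P_0\subseteq P$, and by hypothesis $x\notin P_0$ and $x$ is not parabolic. If $P$, or $\overline\C$, has at most two points, then $F$ is conjugate to $z^{\pm d}$ and the statement is elementary, so assume $P$ has at least three points.

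First I would dispose of the case $x\notin P$. Choose a round disk $U\ni x$ together with a slightly larger disk $U'$ with $\overline{U'}\cap P=\emptyset$. Since $P$ is forward invariant, no component of $F^{-n}(U')$ can contain a critical point — otherwise a critical value, hence a point of $P$, would lie in $U'$ — so every branch of $F^{-n}$ on $U'$ is univalent and omits the at least three points of $P$. These branches therefore form a normal family by Montel's theorem, and the classical analysis of such families together with the Koebe distortion theorem shows that on $U$ the branches of $F^{-n}$ shrink: all components of $F^{-n}(U)$ have spherical diameter $\le\eps$ for every $n$ once $U$ is small enough (part 1), and their diameters tend to $0$ uniformly as $n\to\infty$ (part 2). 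The mechanism is that $F$ expands the Poincar\'e metric of $\overline\C\setminus P$, univalent pullbacks staying in a compact part of $\overline\C\setminus P$ contract it definitely, and pullbacks approaching $P$ are automatically small in the spherical metric.

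The substantive case is $x\in P\setminus P_0$, where a pullback component of $U$ may genuinely contain a critical point, so univalence and Koebe are not available directly. Here the only obstructions left are the finitely many critical points of $F$. For $U$ small enough, a recurrent critical point $c$ can occur in a backward chain $U=W_0\leftarrow W_1\leftarrow\cdots$ of pullback components only finitely often, since $c\in W_j$ forces $F^j(c)\in U$ and $x\notin\omega(c)$; the non-recurrent critical points may occur infinitely often, but — being finite in number and not lying in their own $\omega$-limit sets — are controlled by Ma\~ne's covering (``telescope'') lemma. That lemma splits any backward chain into univalent stretches separated by the critical passages: each univalent stretch yields a genuine contraction of spherical diameters by Koebe, using a definite amount of room around the stretch provided by the non-recurrence hypotheses, while one critical passage inflates diameters only by a factor bounded in terms of the local degrees, and one shows the accumulated contraction outweighs the inflation. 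Choosing $U$ small absorbs the bounded inflation factors, giving part (1); the accumulated contraction gives part (2).

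I expect the main obstacle to be exactly this last point: controlling pullback components that return, possibly infinitely often, to neighbourhoods of the non-recurrent critical points. A purely metric argument does not suffice, because a thin univalent pullback can carry essentially no definite contraction and because each critical passage destroys univalence; it is Ma\~ne's combinatorial covering lemma that converts the qualitative hypothesis ``$x\notin\omega(c)$ for every recurrent critical point $c$'' into the quantitative statement that such returns cannot persist long enough to prevent shrinking. I would therefore reproduce that lemma in detail, or simply refer the reader to \cite{TanShi}, rather than attempt a proof via the Poincar\'e metric alone.
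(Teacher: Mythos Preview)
Your sketch is a faithful outline of Ma\~ne's argument as presented in \cite{Mane} and streamlined in \cite{TanShi}, and there is no gap in the approach you describe. However, the paper itself does not prove this theorem at all: it is stated as a quoted result (``Recall the following theorem of Ma\~ne \cite{Mane,TanShi}'') and used as a black box in the subsequent argument. So there is nothing to compare --- you have supplied a proof where the paper simply cites one.
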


If $F=f$, then the assumptions of this theorem are satisfied for all points in the Julia set.
From the backward stability of $f$ on $J_f$, we can deduce the backward stability of $g$ on $J_g$:

\begin{proposition}[Backward stability of $g$ on $J_g$]
\label{bs}
 For every point $x\in J_g$ and every $\eps>0$, there exists a neighborhood $U$ of $x$ such that
 \begin{enumerate}
   \item for all $n$, the diameter of any component of $g^{-n}(U)$ is less than
   $\eps$ in the spherical metric,
   \item for every $\eps'>0$, there exists a positive integer $n_0$ such that
   for $n>n_0$, every component of $g^{-n}(U)$ has diameter $<\eps'$.
 \end{enumerate}
\end{proposition}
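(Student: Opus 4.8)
The plan is to transfer the backward stability of $f$, which by Theorem \ref{mane} holds at \emph{every} point of $J_f$, across the conjugating homeomorphism $\Phi\colon X-\Im\Ac\to Y-\Im\Bc$. Two features of the construction make this work. First, the backward invariance $f^{-1}(\Im\Ac)\subseteq\Im\Ac$ gives not only that $X-\Im\Ac$ is forward invariant but that $f^{-n}(W)\subseteq X-\Im\Ac$ for every $W\subseteq X-\Im\Ac$ and every $n$; so $\Phi$ genuinely intertwines all the relevant preimages of $f$ with those of $g$. Second, the fact that $\Ac$ and $\Bc$ are contracted will be used to show that $\Phi$ and $\Phi^{-1}$ cannot blow up the diameters of small connected sets that avoid the reglued arcs.

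The first step is a diameter-control lemma: for every $\eps>0$ there is $\delta>0$ such that every connected set $E\subseteq X-\Im\Ac$ with $\mathrm{diam}\,E<\delta$ has $\mathrm{diam}\,\Phi(E)<\eps$, and symmetrically for $\Phi^{-1}$. I would prove it by contradiction and compactness. If connected sets $E_n\subseteq X-\Im\Ac$ have $\mathrm{diam}\,E_n\to0$ but $\mathrm{diam}\,\Phi(E_n)\ge\eps$, pick a limit point $p$ of $\bigcup E_n$. If $p\notin\Im\Ac$ then $\Phi$ is continuous at $p$ (it is continuous on all of $X-\Im\Ac$), so $\Phi(E_n)\to\Phi(p)$, a contradiction; this case already covers the accumulation points of $\Ac$, which lie in $X-\Im\Ac$. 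The remaining case is $p\in\alpha(S^1)$ for some $\alpha\in\Ac$: there $\alpha(S^1)$ locally separates the sphere, and since $E_n$ is connected and disjoint from $\Im\Ac$ it eventually lies on a single local side of the arc, on which $\Phi\circ\hat\alpha$ extends continuously with boundary value $\beta$, so $\Phi(E_n)$ shrinks onto a point of $\beta(S^1)$, again a contradiction. Making the last two cases precise is where one uses that $\Ac$ is contracted (to handle the other arcs accumulating near $p$) and that $\Bc$ is contracted (so that when $\Phi(E)$ is enlarged by the $\beta$-arcs whose closures it meets, only finitely many of those exceed $\eps$ in diameter). In the concrete setting of Section \ref{s_restate}, where the members of $\Ac$ are pullbacks of $\alpha_0$ each contained in a pullback of a neighborhood disjoint from the post-critical set, the same lemma can be extracted from the Koebe distortion theorem, $\Phi$ on each such pullback being a single model regluing on a conformally bounded domain.

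Granting the lemma, fix $x\in J_g$ and $\eps>0$. Using assumption \ref{ass3}, which together with the (semi-)invariance conditions forces the $g$-orbit of every $\beta$-path eventually to leave $\Im\Bc$ for good, and using that $g$ is a ramified covering with finite fibers, one first reduces to the case $x\notin\Im\Bc$: if $x$ lies on a $\beta$-path, take $m$ with $g^{\circ m}(x)\notin\Im\Bc$, let $W$ be a small neighborhood of $g^{\circ m}(x)$, and take for $U$ the component of $g^{-m}(W)$ containing $x$; then every component of $g^{-n}(U)$ lies inside a component of $g^{-(n+m)}(W)$, and both conclusions at $x$ follow from those at $g^{\circ m}(x)\in J_g$. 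When $x\notin\Im\Bc$, the definition of the Fatou set of $g$ applied with $n=0$ gives $x':=\Phi^{-1}(x)\in J_f$. Choose $\delta=\delta(\eps)$ from the lemma, apply Theorem \ref{mane} to $f$ at $x'$, and let $U'$ be the resulting neighborhood, shrunk so that $\mathrm{diam}\,U'<\delta$ and every component of $f^{-n}(U')$ has diameter $<\delta$; by backward invariance these components are connected subsets of $X-\Im\Ac$, so by the lemma their $\Phi$-images have diameter $<\eps$, and by part (2) of Theorem \ref{mane}, feeding $\delta''(\eps')$ into Mañé, these images eventually have diameter $<\eps'$ for any prescribed $\eps'$. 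Taking for $U$ a suitable neighborhood of $x$ built from $\Phi(U'\setminus\Im\Ac)$ and using that $\Phi$ conjugates $f$ to $g$ off the reglued sets, the components of $g^{-n}(U)$ are the $\Phi$-images of the components of $f^{-n}(U')$, possibly enlarged along finitely many short $\beta$-arcs; both conclusions of Proposition \ref{bs} follow.

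The main obstacle is the diameter-control lemma together with the point-set bookkeeping it entails: $\Phi$ is a homeomorphism only between the non-closed sets $X-\Im\Ac$ and $Y-\Im\Bc$, so images of open sets need not be open, and a priori a component of $g^{-n}(U)$ could run along $\beta$-paths and be far larger than the matching component of $f^{-n}(U')$. Ruling this out — showing that only boundedly many, uniformly short $\beta$-arcs are ever attached — is exactly where the hypothesis that $\Ac$ and $\Bc$ are contracted (Proposition \ref{contracted}) does the essential work; the rest is a routine combination of Mañé's theorem with the conjugacy $\Phi$.
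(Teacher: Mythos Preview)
There is a genuine error in the foundation of your argument. You claim that the backward invariance $f^{-1}(\Im\Ac)\subseteq\Im\Ac$ gives $f^{-n}(W)\subseteq X-\Im\Ac$ for every $W\subseteq X-\Im\Ac$. This is false: backward invariance of $\Im\Ac$ is equivalent to \emph{forward} invariance of its complement, i.e.\ $X-\Im\Ac\subseteq f^{-1}(X-\Im\Ac)$, which is the opposite inclusion. What you need is forward invariance of $\Im\Ac$, and that fails in the setup of Section~\ref{s_restate}: the top path satisfies $f(\alpha_0(S^1))=\alpha_{-1}([0,1])$, whose interior lies in the Fatou component $V$ and is disjoint from $\Im\Ac$. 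Hence preimages $f^{-n}(U')$ of an open neighborhood of a Julia point will in general meet $\Im\Ac$; indeed, since the pullbacks of $\alpha_0$ accumulate on all of $J_f$, one cannot even choose an \emph{open} $U'\ni x'$ disjoint from $\Im\Ac$ in the first place. Your diameter-control lemma, stated only for connected $E\subseteq X-\Im\Ac$, therefore does not apply to the components you need it for, and the sentence ``by backward invariance these components are connected subsets of $X-\Im\Ac$'' is where the argument breaks.

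The paper confronts exactly this difficulty by introducing an extension $\hat\Phi$ of $\Phi$ acting on arbitrary subsets of $X$ (it sends $\alpha(t_1,t_2)$ to $\beta(t_1,t_2)$), and by working with \emph{distinguished neighborhoods}: Jordan disks $W_z$ around $z\in J_f$ whose boundaries avoid $\Im\Ac$ and which, if they meet $\Im\Ac$ at all, cut a single $\alpha$-arc transversally through its center. For such $W_z$ the image $\hat\Phi(W_z)$ is one Jordan disk or two disjoint Jordan disks in $Y$, and Ma\~n\'e's theorem is then transferred via open coverings of $J_f$ and $J_g$ built from these neighborhoods (using the covering formulation of ``contracted'' in Proposition~\ref{contracted}) rather than via a raw diameter bound. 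Your closing paragraph correctly identifies that components of $g^{-n}(U)$ may pick up pieces of $\beta$-arcs, but the resolution you sketch leans on the mistaken invariance claim instead of on a device like $\hat\Phi$ that handles sets genuinely crossing $\Im\Ac$.
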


\begin{proof}
 Consider a point $z\in J_f$.
 Define a {\em distinguished neighborhood} of $z$ as a Jordan domain $W_z$ containing $z$
 and satisfying the following properties:
 \begin{itemize}
   \item if $z\not\in\Im\Ac$, then the boundary of $W_z$ is disjoint from $\Im\Ac$.
   \item if $z=\alpha(0,1)$ for some $\alpha\in\Ac$, then the intersection of $W_z$ with
   $\alpha(S^1)$ is $\{\alpha(t_1,t_2)\ |\ |t_1|<\tau\}$ for some $\tau>0$.
 \end{itemize}
 For any set $A$, we define $\hat\Phi(A)$ as the set of all points $\Phi(a)$, where
 $a\in A-\Im\Ac$, together with all points $\beta(t_1,t_2)$, where $\beta\in\Bc$ is the $\beta$-path
 corresponding to some $\alpha$-path $\alpha\in\Ac$, and $\alpha(t_1,t_2)\in A$.
 For any distinguished neighborhood $W_z$, the set $\hat\Phi(W_z)$ is either a
 Jordan neighborhood of $\Phi(z)$, if $z\not\in\Im\Ac$, or a pair of disjoint Jordan neighborhoods around
 $\beta(0,\pm 1)$ if $z=\alpha(0,\pm 1)$ for $\alpha\in\Ac$.

 We know (see Section \ref{s_moore}) that every point $z\in J_f$ has a basis of distinguished neighborhoods.
 Let $\Wc_f$ denote the set of all distinguished neighborhoods $W_z$ around all points $z\in J_f$.
 In particular, the set $\{W\cap J_f\ |\ W\in\Wc_f\}$ is a basis of the topology in $J_f$.
 Now define $\Wc_g$ as the set of connected components of $\hat\Phi(W)$, where $W$ runs though all elements of $\Wc_f$.
 Then the set $\{E\cap J_g\ |\ E\in\Wc_g\}$ is a basis of the topology in $J_g$.

 Take $\eps,\eps'>0$ and $x\in J_g$.
 Define $\Ec_g$ to be the set of all $E\in\Wc_g$ such that the diameter of $E$ is less than $\eps$.
 Also, consider $\Ec'_g$, the set of all $E\in\Wc_g$ such that the diameter of $E$ is less than $\eps'$.
 Then $\Ec_g$ and $\Ec'_g$ are open coverings of $J_g$.
 Let $\Ec_f$ be the set of all $W\in\Wc_f$ such that all components of $\hat\Phi(W)$ belong to $\Ec_g$
 (there are at most two components).
 It is not hard to see that $\Ec_f$ is an open covering of $J_f$.
 The open covering $\Ec'_f$ is defined similarly, with $\Ec'_g$ replacing $\Ec_g$.
 Consider any preimage $z$ of $x$ under $\hat\Phi$ (there can be at most two preimages).
 By the backward stability of $f$, there is a neighborhood $U_f$ of $z$
 depending on $\eps$ but not on $\eps'$, with the following properties:
 \begin{enumerate}
   \item for any $n>0$, every connected component of $f^{-n}(U_f)$ lies entirely in some element of $\Ec_f$;
   \item there exists $n_0$ (depending on both $\eps$ and $\eps'$)
   such that for all $n>n_0$, every connected component of $f^{-n}(U_f)$
   lies entirely in some element of $\Ec'_f$.
 \end{enumerate}
 Moreover, we can assume that $U_f\in\Wc_f$.
 Let $U_g$ be the connected component of $\hat\Phi(U_f)$ containing $x$.
 Then:
 \begin{enumerate}
   \item For any $n>0$, every connected component of $g^{-n}(U_g)$ lies entirely in some element of $\Ec_g$.
   Indeed, every connected component of $g^{-n}(U_g)$ is a connected component of $\hat\Phi(W)$,
   where $W$ is a connected component of $f^{-n}(U_f)$.
   Since $W$ lies in a single element $E$ of $\Ec_f$, and all (one or two) components of $\hat\Phi(E)$
   lie in $\Ec_g$, all components of $\hat\Phi(W)$ lie in $\Ec_g$.
   In particular, every component of $g^{-n}(U_g)$ lies in $\Ec_g$.
   \item There exists $n_0$ (depending on both $\eps$ and $\eps'$) such that for all $n>n_0$,
   every connected component of $g^{-n}(U_g)$ lies in some element of $\Ec_g$.
   The proof is similar.
 \end{enumerate}
 This concludes the proof of the theorem.
\end{proof}

\subsection{Thurston equivalence}
\label{s_Th_eq}
The map $g$ is critically finite.
Indeed, the critical value of $g$ is $\Phi(v)$, and its forward orbit under $g$ coincides with
the $\Phi$-image of the forward orbit of $v$ under $f$.
In this section, we will prove that the map $g$ is Thurston equivalent to $h$.

We first recall the notion of Thurston equivalence.
A ramified self-covering of the sphere with finite post-critical set is sometimes called a {\em Thurston map}
(recall that the post-critical set is the union of forward orbits of all critical values, including the critical
values but, in general, not the critical points).
Two Thurston maps $F$ and $G$ with post-critical sets $P_F$ and $P_G$, respectively, are
called {\em Thurston equivalent} if there exist homeomorphisms $\phi,\psi:S^2\to S^2$ that
make the following diagram commutative
$$
\begin{CD}
  (S^2,P_F) @>F>> (S^2,P_F)\\
  @V\psi VV @VV\phi V\\
  (S^2,P_G) @>G>> (S^2,P_G)
\end{CD}
$$
and such that $\phi$ and $\psi$ are homotopic relative to $P_F$ through ramified
self-coverings, in particular, $\phi|_{P_F}=\psi|_{P_F}$.
The following are well-known useful criteria of Thurston equivalence:

\begin{theorem}
\label{def-teq}
  Suppose that $F_t$, $t\in [0,1]$, is a continuous family of Thurston maps of degree 2
  such that the size of $P_{F_t}$ does not change with $t$.
  Then $F_0$ is Thurston equivalent to $F_1$.
\end{theorem}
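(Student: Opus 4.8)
The plan is to show that a degree‑$2$ Thurston map is rigid under continuous deformations that preserve $|P_{F_t}|$, because all the relevant combinatorial data — the post‑critical set together with its self‑map, the set of critical values, and the underlying branched cover — is locally constant in such a family. \emph{Step 1 (normalize the post‑critical set).} Since $|P_{F_t}|$ is a fixed integer $n$ and the family is continuous, the post‑critical sets trace out $n$ disjoint arcs in $S^2\times[0,1]$. By the isotopy extension theorem there is an ambient isotopy $(\Psi_t)_{t\in[0,1]}$ of $S^2$ with $\Psi_0=\mathrm{id}$ and $\Psi_t(P_{F_0})=P_{F_t}$. Replacing $F_t$ by $G_t=\Psi_t^{-1}\circ F_t\circ\Psi_t$ we get again a continuous family of degree‑$2$ Thurston maps with $G_0=F_0$ and $P_{G_t}=P:=P_{F_0}$ for all $t$, \emph{pointwise}. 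Since $\Psi_1$ conjugates $F_1$ to $G_1$, the pair $\phi=\psi=\Psi_1^{-1}$ realizes a Thurston equivalence between $F_1$ and $G_1$; as Thurston equivalence is transitive, it suffices to prove $G_0$ and $G_1$ Thurston equivalent.

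\emph{Step 2 (the combinatorial data is constant).} For each $p\in P$ the point $G_t(p)$ lies in the finite set $P$ and varies continuously with $t$, hence is independent of $t$; so $G_t|_P$ is a fixed self‑map $\tau\colon P\to P$. By Riemann--Hurwitz, a degree‑$2$ branched cover of $S^2$ has exactly two simple critical points with two \emph{distinct} critical values; the two critical values of $G_t$ lie in $P$ and move continuously, hence form a fixed two‑point set $V\subset P$. Thus every $G_t$ is a branched double cover of $S^2$ branched over the same pair $V$. \emph{Step 3 (a continuous family of intertwining homeomorphisms).} A branched double cover of $S^2$ over a two‑point set is unique up to isomorphism (the standard one, modelled on $z\mapsto z^2$), so there are homeomorphisms $\eta_t$ of $S^2$ with $G_t=G_0\circ\eta_t$. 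One can choose $\eta_t$ continuously in $t$ with $\eta_0=\mathrm{id}$: away from the two critical points of $G_0$ one has $\eta_t=G_0^{-1}\circ G_t$ in the evident local branches, and near a critical point both $G_0$ and the nearby $G_t$ are modelled on $z\mapsto z^2$, so a local $\eta_t$ exists; the only global ambiguity is post‑composition with the deck involution of $G_0$, a discrete choice pinned down by $\eta_0=\mathrm{id}$.

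\emph{Step 4 (conclusion).} For $p\in P$ we have $G_0(\eta_t(p))=G_t(p)=\tau(p)=G_0(p)$, so $\eta_t(p)$ lies in the fiber $G_0^{-1}(G_0(p))$, a set of at most two points; since $t\mapsto\eta_t(p)$ is continuous and $\eta_0(p)=p$, it equals $p$ for all $t$. Hence $(\eta_t)$ is an isotopy rel $P$ from $\mathrm{id}$ to $\eta_1$. Finally $G_1=G_0\circ\eta_1$ gives $\mathrm{id}\circ G_0=G_1\circ\eta_1^{-1}$, so $\phi=\mathrm{id}$ and $\psi=\eta_1^{-1}$ make the Thurston square commute, and $\phi$, $\psi$ are homotopic (indeed isotopic) rel $P$, being both isotopic to $\mathrm{id}$ rel $P$; in particular they agree on $P$. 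Therefore $G_0$ and $G_1$, hence $F_0$ and $F_1$, are Thurston equivalent.

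The main obstacle is Step 3: upgrading the easy \emph{uniqueness} of the branched double cover to a \emph{parametrized} statement — that the intertwining homeomorphisms vary continuously with $t$ (and are genuine homeomorphisms), with the necessary control near the two critical points. Everything else is bookkeeping with finite sets, continuity, and the isotopy extension theorem.
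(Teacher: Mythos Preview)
Your argument is correct and shares the paper's core idea: since the degree is $2$, the critical values of one map coincide with those of the other, so the two-valued lift $G_0^{-1}\circ G_t$ (in the paper, $F_{t'}^{-1}\circ\phi\circ F_t$) splits into single-valued branches, and the branch near the identity gives the required second homeomorphism in the Thurston square. The paper, however, organizes this differently: it works \emph{locally}, showing only that $F_t$ and $F_{t'}$ are Thurston equivalent for $t$ close to $t'$, by choosing a homeomorphism $\phi$ close to the identity with $\phi(P_{F_t})=P_{F_{t'}}$ conjugating the dynamics on the post-critical sets, and then lifting once. Transitivity of Thurston equivalence and compactness of $[0,1]$ then give the global statement. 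Your version globalizes from the start via the isotopy extension theorem, which fixes $P$ once and for all and lets you take $\phi=\mathrm{id}$; the price is that you need the \emph{parametrized} lift $t\mapsto\eta_t$ to be continuous, the point you correctly flag in Step~3. The paper's local approach sidesteps this, needing only a single lift for each nearby pair, while your approach gives a cleaner global picture and an explicit isotopy $\eta_t$ rel $P$ realizing the equivalence.
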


\begin{theorem}
\label{cut-teq}
  Let $Z$ be a compact connected locally connected non-separating subset of $S^2$.
  Suppose that quadratic Thurston maps $F$ and $G$ are such that $F=G$ on $Z$ and $P_F=P_G\subset Z$.
  Assume additionally that $F$ and $G$ coincide on some neighborhoods of critical points of $F$
  (in particular, the critical points of $F$ and $G$ are the same).
  Then $F$ and $G$ are Thurston equivalent.
\end{theorem}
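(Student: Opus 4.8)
Theorem \ref{cut-teq} is the statement I need to prove. The plan is to build the homeomorphisms $\phi,\psi:S^2\to S^2$ realizing Thurston equivalence directly, using the fact that $F$ and $G$ already agree on the large set $Z\supset P_F=P_G$ and near the critical points, so the only discrepancy lives on the complementary disks of $Z$.

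First I would exploit the topology of $Z$. Since $Z$ is compact, connected, locally connected and non-separating, its complement $S^2\setminus Z$ is an open set whose components are Jordan domains (by local connectivity of $Z$ and a standard argument, each complementary component is simply connected with locally connected boundary), and $S^2\setminus Z$ is connected — in fact it is a single Jordan domain if $Z$ is additionally assumed full, but in general I only need that each component $D$ is a disk bounded by a subset of $Z$ on which $F=G$. Because $P_F=P_G\subset Z$, none of the post-critical points lies in these disks. Because $F$ and $G$ agree near the (finitely many, two) critical points and on $Z$, and a quadratic map has exactly two critical points, I can arrange that each complementary disk $D$ is mapped by both $F$ and $G$ as a homeomorphism onto its image (a complementary disk of $Z$, since $Z$ is $F$- and $G$-invariant on the relevant part), or as a branched double cover over a disk if $D$ contains a critical point — but by the hypothesis $F=G$ on a neighborhood of each critical point, I may shrink and assume the branching happens in a region where $F=G$ identically, pushing the genuine discrepancy into disks mapped homeomorphically.

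Next I would construct $\phi$ and $\psi$ disk by disk. On $Z$ set $\phi=\psi=\mathrm{id}$; near the critical points set $\phi=\psi=\mathrm{id}$ as well, using the coincidence hypothesis. For a complementary disk $D$ with $F(\partial D)=G(\partial D)=\partial D'$ and $F|_D, G|_D$ both homeomorphisms $D\to D'$ agreeing on $\partial D$, define $\psi|_D$ to be any homeomorphism of $D$ fixing $\partial D$ pointwise (e.g. the identity) and define $\phi|_{D'}$ so as to make the square commute, namely $\phi|_{D'} = G|_D\circ(\psi|_D)\circ (F|_D)^{-1}$; this is a homeomorphism $D'\to D'$ that equals the identity on $\partial D'$, so it glues continuously with the identity on $Z$. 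Iterating this over the (possibly infinitely many) complementary disks — organized by the forward-orbit structure so that the definition of $\phi$ on a disk is consistent with the commutativity constraint coming from its $F$-preimages — produces well-defined homeomorphisms $\phi,\psi$ of $S^2$ with $\phi\circ F = G\circ\psi$ and $\phi=\psi=\mathrm{id}$ on $Z\supset P_F$. Finally, $\phi$ and $\psi$ are homotopic rel $P_F$ through homeomorphisms: both are the identity on $Z$ and differ only inside the complementary disks, where each is isotopic rel boundary to the identity (any two homeomorphisms of a disk agreeing on the boundary are isotopic rel boundary, by Alexander's trick), and these isotopies can be performed simultaneously on all disks while staying the identity on $Z$; since $P_F\subset Z$, the homotopy is rel $P_F$, and it can be taken through branched coverings (homeomorphisms are in particular branched coverings). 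That gives Thurston equivalence of $F$ and $G$.

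The main obstacle I expect is the bookkeeping when there are infinitely many complementary disks of $Z$ (which is the relevant case in the application — $Z$ will be something like a filled Julia-type set with infinitely many bounded Fatou components): I must choose the homeomorphisms on the disks coherently so that the relation $\phi\circ F=G\circ\psi$ holds globally rather than just on a single disk, and so that the resulting maps are genuinely continuous on all of $S^2$ (continuity at points of $Z$ that are accumulated by complementary disks requires controlling the size of the correction on small disks — here the hypothesis that $Z$ is locally connected, hence that complementary disks are "small near $Z$", i.e. form a null sequence, is exactly what makes $\phi$ and $\psi$ continuous). The cleanest way to handle the infinitely-many-disks issue is to first define $\phi$ and $\psi$ on the disks in the orbit of a single critical value, propagate by the commutativity relation to all preimage disks, and check continuity using local connectivity of $Z$; I expect this to be the only genuinely technical point, everything else being the Alexander trick and elementary gluing.
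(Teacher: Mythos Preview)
Your core instinct---build a homeomorphism equal to the identity on $Z$ and invoke Alexander on the complement---matches the paper, but the execution has both an unnecessary complication and a genuine gap.

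The complication: since $Z$ is connected and non-separating, $S^2\setminus Z$ is a \emph{single} simply connected domain. You even state that $S^2\setminus Z$ is connected, yet then discuss ``infinitely many complementary disks,'' orbit bookkeeping, and continuity at accumulated boundaries; all of that is vacuous. (In the paper's application $Z$ is just an arc.)

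The gap: with $D=S^2\setminus Z$ the unique disk, your formula $\phi|_{D'}=G|_D\circ\psi|_D\circ(F|_D)^{-1}$ needs $F|_D$ to be a homeomorphism, but a degree-two map is never injective on the complement of a non-separating continuum. Your proposed fix---absorb neighborhoods of the critical points into $Z$---destroys the connectedness of $Z$ if those critical points lie in $D$, hence destroys simple connectivity of the complement. You also assume implicitly that $F(Z)\subset Z$ (so that complementary disks map to complementary disks), which is nowhere in the hypotheses.

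The paper avoids all of this with one clean move: since $P_F=P_G$, the maps $F$ and $G$ share critical values, so the multivalued map $G^{-1}\circ F$ is unbranched and splits globally into two single-valued homeomorphisms. The hypothesis $F=G$ near the critical points selects the branch $\phi$ that equals the identity there; connectedness of $Z$ and $F=G$ on $Z$ then force $\phi=\mathrm{id}$ on all of $Z$. Now $\phi$ is a self-homeomorphism of the one disk $S^2\setminus Z$ fixing its (prime-end) boundary, and a single Alexander isotopy---transported via a Riemann map $\Delta\to S^2\setminus Z$ with Carath\'eodory boundary extension---gives $\phi\simeq\mathrm{id}$ rel $Z\supset P_F$. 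The Thurston-equivalence pair is simply $(\mathrm{id},\phi)$ with $G\circ\phi=F$; no piecewise construction is needed.
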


For completeness, we sketch the proofs of these theorems.

\begin{proof}[Proof of Theorem \ref{def-teq}]
  It suffices to prove that $F_t$ is Thurston equivalent to $F_{t'}$ provided that $t$ is close to $t'$.
  The ramified coverings $F_t$ and $F_{t'}$ are then uniformly close, therefore, their post-critical sets are also close
  (this follows from the fact that the (weighted) number of critical points in a disk can be computed as a winding number).
  In particular, these maps are conjugate on their post-critical sets.
  Moreover, we can choose a homeomorphism $\phi:S^2\to S^2$ that is close to the identity, maps $P_{F_t}$ to $P_{F_{t'}}$
  and conjugates the dynamics of $F_t$ on $P_{F_t}$ with the dynamics of $F_{t'}$ on $P_{F_{t'}}$.
  The multivalued function $F_{t'}^{-1}\circ\phi\circ F_t$ has two single valued branches.
  Indeed, every critical value of $\phi\circ F_t$ is a critical value of $F_{t'}$, and the only $F_{t'}$-preimage
  of this critical value is the corresponding critical point, since the map is quadratic.
  Thus, whenever $\phi(F_t(z))$ is a ramification point of $F_{t'}^{-1}$, the function $F_{t'}^{-1}\circ\phi\circ F_t$
  can be written as $\sqrt{u^2}$ in some local coordinates.
  It follows that this function has no ramification points, hence it splits into two single valued branches.
  One of the branches is close to the identity --- denote this branch by $\psi$.
\end{proof}

\begin{proof}[Proof of Theorem \ref{cut-teq}]
  By the same argument as in the proof of Theorem \ref{def-teq}, the multivalued function $G^{-1}\circ F$
  splits into two single valued branches.
  Near the critical points, one of the branches of $G^{-1}\circ F$ is the identity.
  It follows that there is a branch $\phi$ of $G^{-1}\circ F$ that restricts to the identity on $Z$ ---
  the branches cannot switch outside of the critical points, and $z\in G^{-1}(F(z))$ for all $z\in Z$.
  There exists a continuous surjective map $\gamma:\overline\Delta\to S^2$ that establishes a holomorphic homeomorphism
  between $\Delta$ and $S^2-Z$ (the set $Z$ is infinite because any quadratic map has exactly two critical values).
  It is not hard to see (with the help of Carath\'eodory's theory) that $\gamma^{-1}\circ\phi\circ\gamma$ extends continuously to a
  self-homeomorphism of $\overline\Delta$ that is the identity on the unit circle.
  Such homeomorphism is isotopic to the identity transformation of $\overline\Delta$ through
  self-homeomorphisms of $\overline\Delta$ restricting to the identity on the unit circle.
  It follows that $\phi$ is isotopic to the identity through homeomorphisms of $S^2$, whose
  restrictions to $Z$, in particular, to the post-critical set $P_F=P_G$, are the identity.
  The theorem follows.
\end{proof}

Consider a simple curve $t\mapsto (a_t,b_t)$, $t\in [0,1]$, in the parameter space
$\Lambda_k$ that connects $f$ to $h$
and lies entirely in $H$ except for the starting point: $R_{a_0,b_0}=f$ and $R_{a_1,b_1}=h$.
Set $f_t=R_{a_t,b_t}$.
Define a continuous function $u_t$ by the following properties:
$$
u_1=0,\quad f_t^{\circ k'}(\sqrt{u_t})=\infty
$$
(observe that the left-hand side of the second equality does not depend on the choice of the square root, and
is a polynomial of $u_t$).
Then we have $u_0=\alpha_0(1,0)^2$.
For all $t\in [0,1]$, consider the multivalued function
$$
j_t=\sqrt{z^2-u_t}
$$

Let us also consider a continuous family of $\alpha$-paths $\alpha_t$ such that, for $t=0$,
we have the same $\alpha_0$ as above, and
$$
\alpha_t(t_1,t_2)=-\alpha_t(-t_1,t_2),\quad \alpha_t(\pm 1,0)^2=u_t,\quad\alpha_1\equiv 0
$$
We can also assume that $f_t\circ\alpha_t(S^1)$ lies entirely in a Fatou component of $f_t$,
with the only exception that the center $\alpha_0(0,\pm 1)$ of the very first path lies in the Julia set.
Set $v_t=f_t(\sqrt{u_t})$ (this point is independent of the choice of the square root, and is a fractional
linear function of $u_t$).
Note that $v_0=v$.
Note also that for all $t\in [0,1]$, we have $v_t\not\in\alpha_t(S^1)$.
Indeed, $\pm\sqrt{u_t}$ are the endpoints of $\alpha_t$.
They belong to strictly pre-periodic Fatou components of $f_t$ (actually, to the same Fatou component
unless $t=0$), therefore, their images under $f_t$ cannot lie in $\alpha_t(S^1)$.
It follows that we can choose a unique branch of $j_t$ such that $j_t(v_t)$ depends continuously on $t$ and $j_1=id$.
From now on, we write $j_t$ to mean this particular branch only.

Note that the function $f_t\circ j_t^{-1}$ extends to the Riemann sphere as a quadratic
rational function $Q_t$.
Set $q_t=j_t\circ Q_t$.
This function is defined on the complement to $Q_t^{-1}(\alpha_t(S^1))$, i.e.
on the complement to a pair of disjoint simple curves.
Intuitively, the function $q_t$ is obtained from $f_t$ by regluing the path $\alpha_t$, or,
to be more precise, we have $q_t=j_t\circ f_t\circ j_t^{-1}$ wherever the right-hand side is
defined (however, $q_t$ is defined on a bigger set).
Note that $q_t$ is always defined and analytic in a neighborhood of $0$, because
$Q_t(0)=v_t$ avoids the image of $\alpha_t$.
In particular, $0$ is a critical point of $q_t$.
Another critical point is $j_t(\infty)=\infty$.
Moreover, the critical orbits of $q_t$ are finite and of constant size.
More precisely, we have
$$
q_t^{\circ\,k'}(0)=j_t\circ f_t^{\circ\, k'-1}(v_t)=\infty,\quad q_t^{\circ\, k}(\infty)=j_t\circ f_t^{\circ\, k}(\infty)=\infty.
$$
In other words, the orbits of $0$ and $\infty$ under $q_t$ have the same dynamics as the
orbits of $0$ and $\infty$, respectively, under $h$.
The relations given above can be easily proved using that $q_t=j_t\circ Q_t$ and that
$Q_t\circ j_t=f_t$ wherever the left-hand side is defined.
The construction of the map $q_t$ will reappear in Section \ref{s_hr}, where more details can be found.

The maps $q_t$ are critically finite but they are not Thurston maps because of discontinuities.
However, one can approximate $q_t$ by ramified coverings $\hat q_t$ that differ from $q_t$
only in a small neighborhood of $Q_t^{-1}(\alpha_t(S^1))$.
Moreover, we can choose these approximations to vary continuously with $t$.
Additionally, we can arrange that $\hat q_1=h$; in any case, $\hat q_1$ is Thurston equivalent to $h$.
Thus $\hat q_t$ is a continuous family of Thurston maps, whose post-critical sets are of constant size.
By Theorem \ref{def-teq}, we obtain

\begin{lemma}
  \label{hatq_h}
  The map $\hat q_0$ is Thurston equivalent to $\hat q_1=h$.
\end{lemma}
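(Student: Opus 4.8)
The plan is to apply Theorem \ref{def-teq} to the continuous family $\hat q_t$, $t\in[0,1]$, which we have just constructed. Recall that each $\hat q_t$ is a ramified self-covering of the sphere of degree $2$ agreeing with $q_t$ outside a small neighborhood of the pair of curves $Q_t^{-1}(\alpha_t(S^1))$, that the family depends continuously on $t$, that $\hat q_1=h$, and that $\hat q_0$ is Thurston equivalent to the map $g$ obtained by regluing $\alpha_0$ (this last point will be used in the next section, not in the present lemma). So the essential content to verify is the hypothesis of Theorem \ref{def-teq}: that $(\hat q_t)$ is a \emph{continuous family of Thurston maps of degree $2$ whose post-critical sets have constant cardinality.}

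First I would record that each $\hat q_t$ is indeed a Thurston map: it is a degree-$2$ ramified covering (this is how $\hat q_t$ was chosen), and its critical points are $0$ and $\infty$, the same as those of $q_t$, since the modification of $q_t$ into $\hat q_t$ takes place away from neighborhoods of the critical points (near $0$ the map $q_t$ is already holomorphic because $Q_t(0)=v_t\notin\alpha_t(S^1)$, and near $\infty=j_t(\infty)$ likewise). Next I would identify the post-critical set. The forward orbit of the critical value $\infty$ under $\hat q_t$ is the orbit of $\infty$ under $q_t$, which by the displayed relation $q_t^{\circ k}(\infty)=\infty$ is a cycle of length dividing $k$ (in fact of length exactly $k$, inherited from the period of $\infty$ under $f_t$ on $\Vc_k$). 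The forward orbit of the critical value $q_t(0)=v_t$ is, by $q_t^{\circ k'}(0)=\infty$, a finite preperiodic chain that lands on the cycle of $\infty$ after $k'$ steps. Hence $P_{\hat q_t}$ is finite, and its description is the same for every $t$: a $k$-cycle through $\infty$ together with a tail of length $k'{-}1$ landing on it from $v_t$.

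Then I would check that the size of $P_{\hat q_t}$ does not jump. The points of $P_{\hat q_t}$ are $v_t,\ Q_t(v_t)=f_t(v_t),\ \dots$ — more precisely the points $j_t\circ f_t^{\circ m-1}(v_t)$ for $m\ge 1$ — each of which is an \emph{algebraic}, in fact rational, function of $u_t$ (equivalently of the parameter $(a_t,b_t)$), and $u_t$ varies continuously with $t$. So the post-critical points move continuously. The only way $|P_{\hat q_t}|$ could change is a collision: two of these finitely many points coinciding for some $t$, or the preperiodic tail merging prematurely into the cycle. But for $t\in(0,1]$ the function $f_t$ lies in the hyperbolic component $H$, so its critical orbit realizes exactly the combinatorics of the center $h$: $0$ hits $\Omega$ after exactly $k'$ steps, $\infty$ has exact period $k$, and there are no accidental coincidences; the same combinatorics is inherited by $q_t$ through the relation $Q_t\circ j_t=f_t$. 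At $t=0$ the map $f_0=f$ is on $\d H$, and one must use that $0$ is non-recurrent (Proposition \ref{non-recurrent}) and that $f^{\circ k'}(0)\in\d\Omega$ (so the orbit still has exactly the expected combinatorial length, with the tail genuinely preperiodic and not periodic); this guarantees $|P_{\hat q_0}|=|P_{\hat q_t}|$ as well. I expect this no-collision verification at the single endpoint $t=0$ to be the only delicate point; away from it, constancy of $|P_{\hat q_t}|$ is immediate from $\hat q_t$ sharing the combinatorics of $h$. Once constancy of the post-critical size is in hand, Theorem \ref{def-teq} applies directly and yields that $\hat q_0$ is Thurston equivalent to $\hat q_1=h$, which is the assertion of the lemma.
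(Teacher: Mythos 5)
Your proposal follows the paper's own argument exactly: the paper establishes the lemma by observing that $\hat q_t$ is a continuous family of Thurston maps whose post-critical sets have constant size and then citing Theorem \ref{def-teq}. You merely spell out the verification of the constant-size hypothesis in more detail (correctly), so this is the same proof rather than an alternative one.
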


What remains to prove is the following

\begin{lemma}
  The maps $\hat q_0$ and $g$ are Thurston equivalent.
\end{lemma}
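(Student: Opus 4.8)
The plan is to compare $\hat q_0$ and $g$ directly on a suitable invariant set and then invoke Theorem~\ref{cut-teq}. Recall that both maps are quadratic Thurston maps with the same critical points $0$ and $\infty$: indeed $g=\Phi\circ f\circ\Phi^{-1}$ has critical point $\Phi(\infty)=\infty$ and the critical point $0$ coming from the reglued curve $\alpha_0$ centered at a simple critical point of $f$, while $\hat q_0=j_0\circ Q_0$ (modified near $Q_0^{-1}(\alpha_0(S^1))$) has critical points $0$ and $j_0(\infty)=\infty$ as computed above. Moreover the post-critical sets coincide: both equal the $\Phi$-image (equivalently the $j_0$-image) of the forward orbit of $v=v_0$ under $f$ together with the cycle of $\infty$, because $q_0^{\circ k'}(0)=j_0\circ f^{\circ k'-1}(v_0)$ and $g^{\circ k'}(\Phi(v))=\Phi\circ f^{\circ k'-1}(v)$, and these are identified under the canonical correspondence between $\Bc$-paths and the curves $Q_0^{-1}(\alpha_0(S^1))$ reglued by $j_0$.

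First I would identify the dynamical spheres of $g$ and $\hat q_0$. The regluing $j_0$ of the single path $\alpha_0$ (extended to its pullbacks by the relation $q_0=j_0\circ f\circ j_0^{-1}$) is, by the uniqueness clause in the definition of regluing and Theorem~\ref{contin}, the ``same'' regluing as $\Phi$ up to a homeomorphism of the target sphere fixing $\Im\Bc$: more precisely, $\Phi$ reglues the contracted set $\Ac$ of all pullbacks of $\alpha_0$, and $q_0$ reglues exactly the same set of curves by the local model $j$, so $\Phi\circ (\text{extension of }j_0^{-1})$ extends continuously to a homeomorphism $\Theta$ of the sphere by the argument already used after Theorem~\ref{conj_tr} (a composition of a regluing with the inverse of a regluing of the same contracted family extends to a homeomorphism). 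This $\Theta$ conjugates $q_0$ to $g$ wherever $q_0=j_0\circ f\circ j_0^{-1}$ is defined, i.e. off $\Im\Bc$.

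Next I would pass from $q_0$ to its continuous approximation $\hat q_0$. By construction $\hat q_0$ differs from $q_0$ only in an arbitrarily small neighborhood $N$ of the pair of curves $Q_0^{-1}(\alpha_0(S^1))$ (and its pullbacks), and one may take $N$ disjoint from the post-critical set and from a neighborhood of the critical points. Let $Z$ be the complement of the union of all these small neighborhoods in the dynamical sphere, pulled back under $\Theta$ to the sphere of $g$; one checks $Z$ is compact, connected, locally connected and non-separating (it is the complement of a countable, contracted family of disjoint open Jordan disks accumulating only on the Julia set, so its complement is a disjoint union of Jordan disks), that $P_{\hat q_0}=P_g\subset Z$, that $\Theta$ conjugates $\hat q_0$ to $g$ on $Z$ because there $\hat q_0=q_0=j_0\circ f\circ j_0^{-1}$, and that $\hat q_0$ and $g$ agree (after transport by $\Theta$) on neighborhoods of their common critical points. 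Theorem~\ref{cut-teq}, applied to $\hat q_0$ and $\Theta^{-1}\circ g\circ\Theta$, then yields Thurston equivalence, and composing with $\Theta$ (a homeomorphism) gives that $\hat q_0$ and $g$ are Thurston equivalent. Combined with Lemma~\ref{hatq_h}, this shows $g$ is Thurston equivalent to $h$.

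The main obstacle is verifying that the two regluings genuinely produce homeomorphic spheres with the conjugacy $\Theta$ respecting the full dynamics off the reglued locus — that is, checking that the pullback structure of $\Im\Bc$ matches the pullback structure of $\bigcup_n Q_0^{-n}(\alpha_0(S^1))$ and that $\Theta$ is globally well-defined and continuous there. This requires knowing that $\Ac$ is contracted (already established) so that Corollary~\ref{contracted_image} and the composition-of-regluings argument apply, and care that the approximation $\hat q_0$ can be taken to equal $q_0$ exactly on a set $Z$ large enough to carry the post-critical set and the critical points; this is where the detailed construction of $\hat q_t$ promised for Section~\ref{s_hr} is used.
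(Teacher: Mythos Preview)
Your proposal contains a genuine gap at its core: the homeomorphism $\Theta$ you want does not exist. The map $j_0$ reglues only the single curve $\alpha_0$, so $q_0=j_0\circ Q_0$ is discontinuous only along the \emph{two} curves $Q_0^{-1}(\alpha_0(S^1))$; it does \emph{not} reglue the full contracted family $\Ac$ of all pullbacks of $\alpha_0$. By contrast $\Phi$ reglues all of $\Ac$. Hence $\Psi=\Phi\circ j_0^{-1}$ is not a composition of two regluings of the \emph{same} family, and the argument you cite from after Theorem~\ref{conj_tr} does not apply: $\Psi$ is itself a regluing of countably many curves (essentially the $j_0$-images of $\Ac\setminus\{\alpha_0\}$) and does not extend to a global homeomorphism of the sphere. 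Your statement that ``$q_0$ reglues exactly the same set of curves'' as $\Phi$ is simply false.

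A second, independent problem is your choice of $Z$. Theorem~\ref{cut-teq} requires $Z$ to be non-separating, i.e.\ $S^2\setminus Z$ must be connected. Taking $Z$ to be the complement of a disjoint union of open Jordan disks (even just the two disks forming a neighborhood of $Q_0^{-1}(\alpha_0(S^1))$) makes $S^2\setminus Z$ disconnected, so the theorem cannot be invoked.

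The paper avoids both problems by not attempting a global conjugacy. Instead it chooses $Z$ to be a single simple arc $\gamma(S^1)$ in the dynamical sphere of $g$ that contains $P_g$ and avoids $\Im\Bc$ (this exists by a Baire category argument). Since $\Psi^{-1}$ is defined and continuous on the complement of $\Im\Bc$, it is defined on $\gamma(S^1)$ and on $g\circ\gamma(S^1)$; one then extends $\Psi^{-1}|_{\gamma(S^1)\cup g\circ\gamma(S^1)}$ to a genuine homeomorphism $\hat\Psi^{-1}$ of the sphere. On this arc, $\hat\Psi\circ q_0\circ\hat\Psi^{-1}$ agrees with $g$, the arc is non-separating, and one may pick $\hat q_0$ equal to $q_0$ on $\hat\Psi^{-1}(\gamma(S^1))$. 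Now Theorem~\ref{cut-teq} applies with $Z=\gamma(S^1)$.
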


\begin{proof}
  We have
  $$
  g(x)=\Psi\circ q_0\circ\Psi^{-1}(x)
  $$
  for all $x\not\in\Im\Bc$, where $\Psi=\Phi\circ j_0^{-1}$.
  Note that the post-critical set of $g$ is disjoint from $\Im\Bc$.
  Recall that $\Phi$ reglues the set of $\alpha$-paths $\Ac$ in the dynamical sphere of $f$
  into a set of $\beta$-paths $\Bc$ in the dynamical sphere of $g$.
  Let $\beta_0\in\Bc$ be the $\beta$-path corresponding to the $\alpha$-path $\alpha_0$.

  Consider a continuous map $\gamma:\overline\Delta\to S^2$ with the following properties:
  \begin{itemize}
    \item The image $\gamma(S^1)$ is a simple curve, does not separate the sphere,
    contains the post-critical set of $g$, and does not intersect $\Im(\Bc)$.
    \item The restriction of $\gamma$ to $\Delta$ is a holomorphic homeomorphism between $\Delta$ and $S^2-\gamma(S^1)$.
  \end{itemize}
  The existence of a simple curve containing the post-critical set of $g$ and disjoint from $\Im\Bc$
  follows from a simple Baire category argument, see Section \ref{s_regs} for more detail.
  Then we can define a homeomorphism $\hat\Psi:S^2\to S^2$ such that $\hat\Psi^{-1}=\Psi^{-1}$ on $\gamma(S^1)$
  and on $g\circ\gamma(S^1)$.
  The restriction of $g\circ\gamma$ to the unit circle will be the same as the
  restriction of $\hat\Psi\circ q_0\circ\hat\Psi^{-1}\circ\gamma$ to the unit circle.
  Indeed, for every $z\in S^1$, we have
  $$
  \hat\Psi\circ q_0\circ\hat\Psi^{-1}\circ\gamma(z)=\hat\Psi\circ\Psi^{-1}\circ\Psi\circ q_0\circ\Psi^{-1}\circ\gamma(z)=
  $$
  $$
  =\hat\Psi\circ\Psi^{-1}\circ g\circ\gamma(z)=g\circ\gamma(z).
  $$
  We can even find a continuous approximation $\hat q_0$ of $q_0$ such that the restriction of
  $\hat\Psi\circ\hat q_0\circ\hat\Psi^{-1}\circ\gamma$ to the unit circle is still the same.
  Indeed, we must have $q_0=\hat q_0$ on the set $\Psi^{-1}\circ\gamma(S^1)$, which disjoint
  from the discontinuity locus of $q_0$.
  By Theorem \ref{cut-teq}, $g$ is Thurston equivalent to $\hat\Psi\circ\hat q_0\circ\hat\Psi^{-1}$, hence to $\hat q_0$.
\end{proof}

\subsection{Semi-conjugacy}
Recall the following theorem of Mary Rees \cite{Rees_description}:

\begin{theorem}
\label{semi-conj}
  Suppose that a Thurston map $F$ of degree 2 is Thurston equivalent to a hyperbolic rational function $G$.
  Moreover, suppose that there is an $F$-invariant complex structure near the critical orbits of $F$.
  Then $F$ is semi-conjugate to $G$, i.e. there is a continuous map $\phi$ from
  the dynamical sphere of $F$ to the dynamical sphere of $G$ such that $\phi\circ F=G\circ\phi$.
\end{theorem}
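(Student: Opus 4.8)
The plan is to obtain $\phi$ as a uniform limit of homeomorphisms produced by Thurston's lifting procedure. Write $S^2$ for the dynamical sphere of $F$ and $\overline\C$ for that of $G$, and fix homeomorphisms $\phi_0,\psi_0\colon S^2\to\overline\C$ realizing the Thurston equivalence, so that $\phi_0\circ F=G\circ\psi_0$, $\phi_0|_{P_F}=\psi_0|_{P_F}$, and $\phi_0$ is isotopic to $\psi_0$ rel $P_F$. Using the given $F$-invariant complex structure near the critical orbits of $F$, I would first modify $\phi_0$ by an isotopy supported near the critical orbits and fixing $P_F$ so that $\phi_0$ conjugates $F$ to $G$ on a neighborhood $W$ of the (super)attracting critical cycles: near these cycles both maps are holomorphically conjugate to $z\mapsto z^2$ by B\"ottcher coordinates (for $F$ with respect to the invariant structure), so one composes the two B\"ottcher charts. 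Then I define homeomorphisms $\phi_n$ inductively by $\phi_1=\psi_0$ and $G\circ\phi_{n+1}=\phi_n\circ F$: since $F$ and $G$ have degree $2$ and $\phi_n$ carries critical values of $F$ to critical values of $G$, the map $\phi_n\circ F$ lifts through $G$, with two choices of lift, and I choose $\phi_{n+1}$ to be the one isotopic to $\phi_n$ rel $P_F$; this choice exists because $\phi_0\simeq\psi_0$ rel $P_F$ and, inductively, lifting an isotopy from $\phi_{n-1}$ to $\phi_n$ produces one from $\phi_n$ to $\phi_{n+1}$ rel $F^{-n}(P_F)$, the sets $F^{-n}(P_F)$ exhausting a dense subset of $J_F$. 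Since $\phi_0$ already conjugates $F$ to $G$ on $W$, one gets $\phi_n=\phi_0$ on $W$ for every $n$.

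The heart of the matter is the uniform convergence of $(\phi_n)$. Because $G$ is hyperbolic there is a conformal metric on a neighborhood $N$ of $J_G$ in which $G$ expands by a fixed factor $\lambda>1$, and every point of the Fatou set of $G$ is carried into $W$ by finitely many iterates of $G$. The lifting relations $G\circ\phi_{n+1}=\phi_n\circ F$ and $G\circ\phi_n=\phi_{n-1}\circ F$ present $\phi_{n+1}(x)$ and $\phi_n(x)$ as the images, under the same local inverse branch of $G$, of the points $\phi_n(F(x))$ and $\phi_{n-1}(F(x))$. If the $F$-orbit of $x$ reaches a preimage of $W$ within $m$ steps, then iterating this relation $m$ times and using $\phi_j=\phi_0$ on $W$ gives $d(\phi_{n+1}(x),\phi_n(x))=0$ once $n\ge m$; if instead the forward orbit of $x$ stays in $N$, expansion of $G$ gives $d(\phi_{n+1}(x),\phi_n(x))\le\lambda^{-1}d(\phi_n(F(x)),\phi_{n-1}(F(x)))$, hence geometric decay. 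A Koebe-distortion and normal-families argument is then needed to make these estimates uniform in $x$ and to glue them along orbits that drift from $J_F$ into the basins; granting that, $\|\phi_{n+1}-\phi_n\|_\infty\to 0$ geometrically, so $\phi_n$ converges uniformly to a continuous $\phi$. Passing to the limit in $G\circ\phi_n=\phi_{n-1}\circ F$ yields $G\circ\phi=\phi\circ F$; as a uniform limit of homeomorphisms, $\phi$ has topological degree $1$, hence is surjective, so it is the desired semi-conjugacy. It need not be injective, which is exactly why the conclusion is a semi-conjugacy and not a conjugacy: the fibers of $\phi$ record the collapsing.

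The main obstacle is the step I glossed as ``Koebe-distortion and normal-families'': obtaining a genuinely uniform rate of convergence on all of $S^2$, not just pointwise, in the transition zone where forward orbits leave a neighborhood of $J_F$ and descend into the super-attracting basins. The hypothesis that $F$ carries an invariant complex structure near its critical orbits is precisely what makes this tractable: it lets one arrange, as above, that the $\phi_n$ are literally equal on a fixed neighborhood $W$ of the critical cycles, so the only error fed into the expanding estimate comes from a region of definite conformal modulus away from where $G$ fails to expand, and can be controlled by distortion bounds. Secondary points still requiring care are verifying at the outset that the successive lifts $\phi_n$ really can be chosen coherently inside one isotopy class relative to the increasing sets $F^{-n}(P_F)$ --- this is where Thurston equivalence to the specific rational map $G$, rather than mere matching of combinatorial data, is indispensable --- and checking that the intertwining identity and the surjectivity survive the passage to the limit. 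One may alternatively package the convergence Teichm\"uller-theoretically: the Thurston pullback map of $F$ is a weak contraction of $\mathrm{Teich}(S^2,P_F)$ towards the fixed point represented by $G$, and the orbit of $[\phi_0]$ converges to it; the construction above is the concrete incarnation of that statement that also delivers the boundary map $\phi$.
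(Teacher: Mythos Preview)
Your overall architecture matches the paper's: build $\phi$ as a uniform limit of Thurston lifts $\phi_n$ and pass to the limit in $G\circ\phi_n=\phi_{n-1}\circ F$. You also correctly locate the one real gap, the uniform convergence rate. The paper fills it without case analysis or Koebe distortion, via a Schwarz--Pick argument. First it uses the $F$-invariant complex structure more aggressively than you do: put $F$ on $\overline\C$, holomorphic on an open $U\supset P_F$ with $F(U)\Subset U$, and arrange the initial pair $\phi_0,\phi_1$ to be \emph{equal on all of $U$} (not merely on $P_F$); then inductively $\phi_n=\phi_{n-1}$ on $F^{-(n-1)}(U)$, so $d(\phi_{n+1}(x),\phi_n(x))=0$ whenever $x\in F^{-n}(U)$. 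For $x\notin F^{-n}(U)$ set $y:=F^{\circ n}(x)\notin U$; writing $U'=\phi_0(U)$, both $\phi_0(y)$ and $\phi_1(y)$ lie in $\overline\C\setminus U'\Subset\overline\C\setminus\overline{G(U')}$, so their hyperbolic distance in the latter domain is bounded by some $C$ independent of $x$ and $n$. Every branch of $G^{-1}$ maps $\overline\C\setminus\overline{G(U')}$ into itself and hence contracts its hyperbolic metric by a fixed factor $q<1$; since $\phi_{n+1}(x)$ and $\phi_n(x)$ are the images of $\phi_1(y)$ and $\phi_0(y)$ under the \emph{same} branch of $G^{-n}$, one obtains $d(\phi_{n+1}(x),\phi_n(x))\le Cq^n$ uniformly. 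This single estimate replaces your expanding-metric-near-$J_G$ scheme and makes the ``transition zone'' disappear.

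A smaller point: in your setup you modify $\phi_0$ to conjugate $F$ to $G$ on $W$ but keep $\psi_0$ unchanged. After that modification the relation $\phi_0\circ F=G\circ\psi_0$ no longer holds globally, so the inductive lifting does not start coherently and your claim that $\phi_n=\phi_0$ on $W$ does not follow. The fix is to \emph{redefine} $\phi_1$ as the lift of the modified $\phi_0$; then $\phi_1=\phi_0$ on $U$ is automatic, which is precisely the normalization the paper adopts.
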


\begin{proof}
We can assume $F$ to be defined on $\overline\C$
(i.e. on a sphere with a global complex structure) and holomorphic on some open set $U$ containing
the post-critical set and satisfying $F(U)\Subset U$.
We have the diagram
$$
\begin{CD}
\overline\C @>F>> \overline\C\\
@VV\phi_1V @VV\phi_0V\\
\overline{\C} @>G>> \overline{\C}
\end{CD}
$$
where $\phi_0$ and $\phi_1$ are homeomorphisms holomorphic on $U$; moreover,
$\phi_1=\phi_0$ on $U$, and $\phi_0$ is isotopic to $\phi_1$ relative to $U$.
Consider the multivalued function $G^{-1}\circ \phi_1\circ F$.
Since the critical values of $\phi_1\circ F$ coincide with ramification points of $G^{-1}$,
this multivalued function has a single valued branch $\phi_2$ such that $\phi_2=\phi_1$
on $F^{-1}(U)$.
We used that $F$ and $G$ have degree 2, because we need that only critical points can
map to critical values.
We have the following diagram:
$$
\begin{CD}
\overline{\C} @>F>> \overline{\C} @>F>> \overline{\C}\\
@VV\phi_2V @VV\phi_1V @VV\phi_0V\\
\overline{\C} @>G>> \overline{\C} @>G>> \overline{\C}
\end{CD}
$$
Similarly, we can define a sequence of homeomorphisms $\phi_n$ with the following properties:
$G\circ \phi_n=\phi_{n-1}\circ F$ and $\phi_n=\phi_{n-1}$ on $F^{-(n-1)}(U)$.
Then $\phi_n$ is a single valued branch of $G^{-n}\circ\phi_0\circ F^{\circ n}$.

We would like to prove that the sequence of maps $\phi_n$ converges uniformly.
This would follow from the estimate
$$
d(\phi_{n+1}(x),\phi_n(x))\le Cq^{-n},
$$
where $0<q<1$ is a number independent of $x$.
We can assume that $x\not\in F^{-n}(U)$, otherwise the left-hand side is zero.
Consider a curve $\gamma$ connecting $\phi_0\circ F^{\circ n}(x)$ with $\phi_1\circ F^{\circ n}(x)$ in $\overline\C-\overline U$
(we can arrange that this open set be connected by choosing a smaller $U$ if necessary).
Since $\overline\C-\overline U$ is compactly contained in $\overline\C-\overline{F(U)}$,
the hyperbolic length of $\gamma$ in $\overline\C-\overline{F(U)}$ can be made bounded by
some constant independent of $x$ and $n$.
The length of the pull-back of $\gamma$ under $G^{\circ n}$ is bounded by $Cq^{-n}$ with $0<q<1$
by the Poincar\'e distance argument.
The desired estimate now follows.

Let $\phi$ denote the limit of $\phi_n$.
Passing to the limit in both sides of the equality $G\circ\phi_n=\phi_{n-1}\circ F$,
we obtain that $G\circ \phi=\phi\circ F$.
\end{proof}

We also need the following general fact:

\begin{proposition}
\label{connected}
Suppose that a continuous map $\phi:S^2\to S^2$ is the
limit of a uniformly convergent sequence of homeomorphisms $\phi_n$.
Then, for any point $v\in S^2$, the fiber $\phi^{-1}(v)$ is connected.
\end{proposition}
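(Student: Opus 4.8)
The plan is to argue by contradiction, using the fact that a uniform limit of homeomorphisms of $S^2$ is a \emph{monotone} map, in the sense that it does not separate connected sets. Suppose $\phi^{-1}(v)$ is disconnected for some $v$. Then we can write $\phi^{-1}(v) = K_1 \sqcup K_2$, where $K_1,K_2$ are disjoint nonempty closed sets. Since $S^2$ is normal, choose disjoint open sets $U_1 \supset K_1$ and $U_2 \supset K_2$, and let $W = S^2 - (U_1 \cup U_2)$, a compact set disjoint from $\phi^{-1}(v)$. By compactness, $\phi(W)$ is a compact set not containing $v$, so there is $\delta > 0$ with $d(\phi(w), v) \ge \delta$ for all $w \in W$. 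On the other hand, pick $x_1 \in K_1$ and $x_2 \in K_2$; since $S^2 - (U_1\cup U_2)$ separates $x_1$ from $x_2$ only partially, the key point I want is that any arc (or, more robustly, any connected set) joining $x_1$ to $x_2$ must meet $W$.

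The core of the argument: for large $n$, the homeomorphism $\phi_n$ carries a small round disk $D$ around $v$ to a small Jordan domain $\phi_n^{-1}(D)$, and I claim $\phi_n^{-1}(D)$ cannot be connected if it meets both $U_1$ and $U_2$ while avoiding $W$. Here is the mechanism. Fix a radius $r$ small enough that $D = B(v,r)$ has $r < \delta$. For $n$ large, uniform convergence $d(\phi_n,\phi) < r/2$ gives: if $x \in W$ then $d(\phi_n(x), v) \ge d(\phi(x),v) - r/2 \ge \delta - r/2 > r/2$, so $\phi_n(x) \notin B(v, r/2)$; hence $\phi_n^{-1}(B(v,r/2)) \cap W = \emptyset$, i.e. $\phi_n^{-1}(B(v,r/2)) \subset U_1 \cup U_2$. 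Also, for $n$ large, $x_i \in K_i \subset \phi^{-1}(v)$ gives $d(\phi_n(x_i), v) < r/2$, so $x_i \in \phi_n^{-1}(B(v,r/2))$, with $x_1 \in U_1$ and $x_2 \in U_2$. Thus the open set $\phi_n^{-1}(B(v,r/2))$ is contained in the disjoint union $U_1 \cup U_2$ yet meets both pieces. But $\phi_n$ is a homeomorphism, so $\phi_n^{-1}(B(v,r/2))$ is homeomorphic to a disk, in particular connected — contradicting the fact that it splits as a disjoint union of the two nonempty relatively open sets $\phi_n^{-1}(B(v,r/2)) \cap U_1$ and $\phi_n^{-1}(B(v,r/2)) \cap U_2$. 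This contradiction shows $\phi^{-1}(v)$ is connected.

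The main obstacle is making sure no hidden continuity or separation subtlety is swept under the rug: specifically, that $B(v, r/2)$ is genuinely connected (true for a metric ball in $S^2$ with the round metric, for $r$ small) and that $\phi_n^{-1}$ of a connected open set is connected (immediate since $\phi_n$ is a homeomorphism). A secondary point to handle carefully is the choice of $\delta$: it must be extracted \emph{after} fixing the decomposition $K_1 \sqcup K_2$ and the separating open sets, but \emph{before} choosing $r$, and then $n$ is chosen last depending on $r$; the order of quantifiers is the only place an error could creep in. No deeper input — no Moore theory, no properties of $\phi$ beyond being a uniform limit of homeomorphisms — is needed, which is consistent with the proposition being stated as a ``general fact''.
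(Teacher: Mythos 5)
Your proof is correct, and it takes a genuinely different route from the paper's. The paper argues directly: for two points $z,w$ in the fiber, it takes shrinking closed disks $D_n$ around $v$ containing both $\phi_n(z)$ and $\phi_n(w)$, forms the compact connected preimages $A_n=\phi_n^{-1}(D_n)$, passes to a Hausdorff limit $A$ (connected, containing $z$ and $w$), and checks via uniform convergence that $\phi(A)=\{v\}$; thus any two points of the fiber lie in a common connected subset of the fiber. You instead argue by contradiction from a hypothetical separation $\phi^{-1}(v)=K_1\sqcup K_2$: normality of $S^2$ gives disjoint open sets $U_1\supset K_1$, $U_2\supset K_2$, compactness of $W=S^2-(U_1\cup U_2)$ gives a positive lower bound $\delta$ for $d(\phi(\cdot),v)$ on $W$, and for $n$ large with $d(\phi_n,\phi)<r/2$ and $r<\delta$, the set $\phi_n^{-1}(B(v,r/2))$ is connected (being the $\phi_n^{-1}$-image of a small connected ball) yet lies in $U_1\cup U_2$ while meeting both pieces at $x_1\in K_1$ and $x_2\in K_2$ --- a contradiction. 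Your version avoids Hausdorff convergence of compacta entirely and is more self-contained; the paper's version is constructive, exhibiting an explicit continuum inside the fiber, which is closer to the standard ``uniform limit of homeomorphisms is monotone'' argument. The quantifier order you flag ($K_i$, then $U_i$, then $\delta$, then $r$, then $n$) is exactly right. One phrasing slip: you wrote that ``$\phi_n$ carries a small disk $D$ to $\phi_n^{-1}(D)$''; you of course mean $\phi_n^{-1}$ carries $D$ there, and the subsequent paragraph makes the intent unambiguous. And, as you note yourself, $r$ should be taken small enough that $B(v,r/2)$ is connected, which is automatic for the round metric on $S^2$.
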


\begin{proof}
Indeed, consider two points $z$ and $w$ in the fiber $\phi^{-1}(v)$.
For large $n$, the points $\phi_n(z)$ and $\phi_n(w)$ are very close to each other.
Let $D_n$ be a small closed disk containing both of them, and set $A_n=\phi_n^{-1}(D_n)$.
We can assume that the diameter of $D_n$ tends to 0 as $n\to\infty$.
The sequence of compact sets $A_n$ has a subsequence that converges in the Hausdorff metric.
Denote the limit by $A$.
As a Hausdorff limit of compact connected sets, the set $A$ is connected.
Moreover, it contains both points $z$ and $w$.
We claim that $\phi(A)=v$.
Indeed, for any point $a\in A$, there is a sequence $a_n\in A_n$ such that $a_n\to a$.
The distance between $\phi_n(a_n)$ and $\phi(a)$ tends to zero, and $\phi_n(a_n)\to v$, hence $\phi(a)=v$.
Thus any pair of points in $\phi^{-1}(v)$ belongs to a common connected subset of $\phi^{-1}(v)$.
This means that $\phi^{-1}(v)$ is connected.
\end{proof}

Thus, in our setting, we obtain the following

\begin{theorem}
\label{h-semiconj-g}
  The map $h$ is semi-conjugate to $g$, i.e. there is a continuous surjective map $\phi:S^2\to\overline\C$
  such that $h\circ\phi=\phi\circ g$.
  Moreover, the fibers of $\phi$ are connected.
\end{theorem}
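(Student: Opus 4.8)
The plan is to read Theorem \ref{h-semiconj-g} off from Rees's Theorem \ref{semi-conj}, applied with $F=g$ and $G=h$, using the Thurston equivalence $g\sim h$ established in Section \ref{s_Th_eq}, and then to deduce connectedness of the fibers from Proposition \ref{connected}.

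First I would verify the hypotheses of Theorem \ref{semi-conj} for $g$. By the opening of Section \ref{s_Th_eq}, $g$ is critically finite of degree $2$, its critical value is $\Phi(v)$, and the forward orbit of $\Phi(v)$ under $g$ is the $\Phi$-image of the forward orbit of $v$ under $f$; since $v$ lies in a Fatou component of $f$ whose orbit is attracted to the super-attracting cycle of $\infty$, and that cycle is the only other post-critical orbit, the post-critical set of $g$ lies in the Fatou set of $g$ and, as noted in Section \ref{s_Th_eq}, is disjoint from $\Im\Bc$. By the construction of topological models in Section \ref{s_tmvr}, the Fatou set of $g$ carries a $g$-invariant complex structure; near the super-attracting cycle this is the standard B\"ottcher structure, so it matches the structure of $h$ under the homeomorphisms realizing the Thurston equivalence. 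Hence $g$ may be realized on $\overline\C$ so as to be holomorphic on an open neighborhood $U$ of its post-critical set with $g(U)\Subset U$ (such a $U$ is assembled from B\"ottcher neighborhoods of the super-attracting cycle and small neighborhoods of the finitely many strictly preperiodic post-critical points). These are exactly the hypotheses of Theorem \ref{semi-conj}, and since $g$ is Thurston equivalent to the hyperbolic rational function $h$, that theorem produces a continuous map $\phi$ from the dynamical sphere of $g$ to $\overline\C$ with $\phi\circ g=h\circ\phi$ --- precisely the asserted semi-conjugacy; moreover, its proof exhibits $\phi$ as the uniform limit of homeomorphisms $\phi_n$.

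The remaining assertions are then formal. For surjectivity, given $y\in\overline\C$ put $x_n=\phi_n^{-1}(y)$, pass to a convergent subsequence $x_{n_k}\to x$ by compactness of the dynamical sphere of $g$, and note $\phi(x)=\lim_k\phi_{n_k}(x_{n_k})=y$ by uniform convergence and continuity of $\phi$, which is the argument of Proposition \ref{connected}. Connectedness of each fiber $\phi^{-1}(v)$ is immediate from Proposition \ref{connected}, since $\phi$ is a uniform limit of homeomorphisms. The only step requiring real care is the bookkeeping in the previous paragraph --- confirming that the Fatou-set complex structure of the topological model $g$ genuinely supplies the $g$-invariant structure near the whole post-critical set, with a forward-invariant neighborhood, demanded by Theorem \ref{semi-conj}; this is routine, the substantive work (the Thurston equivalence $g\sim h$) having already been carried out in Section \ref{s_Th_eq}.
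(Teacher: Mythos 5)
Your proposal is correct and follows the same route as the paper: the paper simply derives the theorem as an immediate consequence of Theorem \ref{semi-conj} (applied with $F=g$, $G=h$, using the Thurston equivalence established in Section \ref{s_Th_eq}) together with Proposition \ref{connected}, exactly as you describe. Your added verification that the Fatou-set complex structure of the topological model $g$ furnishes the required invariant structure near the post-critical set, and your remark that surjectivity of a uniform limit of self-homeomorphisms of the compact sphere follows by the same compactness argument as in Proposition \ref{connected}, are both correct and in the spirit of what the paper leaves implicit.
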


\subsection{Backwards stability II}
In this section, we will discuss certain consequences of Proposition \ref{bs}.
Let $J_g$ be the Julia set of $g$.
As before, we use the symbol $d$ to denote the spherical metric.

\begin{proposition}
\label{delta0}
  There is a positive real number $\delta_0$ such that for every pair $x,y\in J_g$
  with $d(x,y)<\delta_0$, we have $g(x)\ne g(y)$.
\end{proposition}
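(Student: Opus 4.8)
The plan is to argue by contradiction and reduce the statement to the single fact that the critical points of $g$ avoid $J_g$. Suppose no such $\delta_0$ exists. Then there are points $x_n,y_n\in J_g$ with $x_n\ne y_n$, $d(x_n,y_n)\to 0$ and $g(x_n)=g(y_n)$. Passing to a subsequence, I may assume $x_n$ converges to some $x_*\in S^2$; since $d(x_n,y_n)\to 0$, the sequence $y_n$ converges to the same point, and $x_*\in J_g$ because $J_g$ is closed.

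The next step is to see that $x_*$ must be a critical point of $g$. Recall that $g$ is a ramified self-covering of $S^2$ (Theorem \ref{reg_rc}), so away from its finitely many critical points it is a local homeomorphism. If $x_*$ were not critical, there would be a neighbourhood $W$ of $x_*$ on which $g$ is injective; for large $n$ both $x_n$ and $y_n$ lie in $W$, contradicting $g(x_n)=g(y_n)$ with $x_n\ne y_n$. Hence $x_*$ is a critical point of $g$. On the other hand, the critical points of $g$ lie in the Fatou set of $g$: by the discussion in Section \ref{s_Th_eq}, the critical values of $g$ are $\Phi(v)$ and the $g$-image of the periodic critical point, and their forward orbits under $g$ are the $\Phi$-images of the forward orbits of $v$ and of $\infty$ under $f$. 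These orbits lie in the Fatou set of $f$ (in the full basin of the cycle of $\infty$) and, being contained in the post-critical set of $g$, avoid $\Im\Bc$; so by the very definition of the Fatou set of $g$ they consist of Fatou points. Since the Fatou set of $g$ is fully invariant, the critical points of $g$, being preimages of critical values, lie in the Fatou set of $g$ as well, hence not in $J_g$. This contradicts $x_*\in J_g$, and the proposition follows.

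The only delicate ingredient is the claim that the critical points of $g$ lie in the Fatou set; the rest is soft compactness. If one prefers a quantitative route, one can instead invoke Proposition \ref{bs}: for $z\in J_g$ the point $z$ is not a critical value of $g$ (critical values being Fatou points), so $g^{-1}(z)$ consists of two distinct non-critical points, and one may choose a neighbourhood $U_z$ of $z$ with every component of $g^{-1}(U_z)$ of diameter less than half the distance between these two preimages; then any $x,y\in g^{-1}(z)$ with $x\ne y$ lie in distinct components of $g^{-1}(U_z)$ and are separated by a definite amount. A finite subcover of $J_g$ by such neighbourhoods then produces the constant $\delta_0$.
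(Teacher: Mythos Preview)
Your proof is correct and rests on the same key fact the paper uses: $g$ has no critical points on $J_g$, hence is locally injective there, and compactness of $J_g$ yields a uniform $\delta_0$. The paper's argument is the direct Lebesgue-number version of your sequential contradiction argument (cover $J_g$ by neighborhoods of injectivity and take $\delta_0$ to be a Lebesgue number of the cover), and simply asserts ``there are no critical points in $J_g$'' where you spell out the justification via the Fatou set.
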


\begin{proof}
  Since there are no critical points in $J_g$, every point $x\in J_g$ has a
  neighborhood $U_x$ such that $g|_{U_x}$ is injective.
  Let $\delta_0$ be the Lebesgue number of the covering $\Uc=\{U_x\}_{x\in J_g}$.
  Now assume that $d(x,y)<\delta_0$.
  Then $x,y\in U$, where $U\in\Uc$.
  Since $g$ is injective on $U$, the result follows.
\end{proof}

\begin{proposition}
\label{n0}
  For every $\eps>0$ and $\eps'>0$, there exist a positive real number $\delta(\eps)$ (depending only on $\eps$),
  and a positive integer $n_0(\eps,\eps')$ (depending on $\eps$ and $\eps'$) such that for $x,y\in J_g$
  \begin{enumerate}
    \item if $d(x,y)<\delta(\eps)$, then for every $n\ge 0$
    and for every $x'$ such that $g^{\circ n}(x')=x$, there is a point $y'$ such that
    $g^{\circ n}(y')=y$, and $d(x',y')<\eps$,
    \item if $d(x,y)<\delta(\eps)$ and $n\ge n_0(\eps,\eps')$, then for every $x'$
    such that $g^{\circ n}(x')=x$, there is a point $y'$ such that $g^{\circ n}(y')=y$, and $d(x',y')<\eps'$.
  \end{enumerate}
\end{proposition}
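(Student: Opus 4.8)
\emph{Proof plan.} The plan is to obtain Proposition~\ref{n0} from the backward stability of $g$ on $J_g$ (Proposition~\ref{bs}) together with the compactness of $J_g$: the diameter bounds on connected components of iterated preimages furnished by Proposition~\ref{bs} are exactly what is needed to produce, for a point $x'$ lying over $x$, a point $y'$ lying over a nearby $y$ in the same component, and the compactness of $J_g$ is what makes the relevant constants uniform in the required way.

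First I would fix $\eps>0$. For each $z\in J_g$, Proposition~\ref{bs} (applied with this $\eps$) provides a neighborhood of $z$; I would take it to be a \emph{connected} open set $U_z$ of diameter less than $\eps$ such that (i) for every $n$, each connected component of $g^{-n}(U_z)$ has diameter less than $\eps$, and (ii) for every $\eps'>0$ there is an integer $n_0(z,\eps')\ge 1$ such that for $n>n_0(z,\eps')$ each such component has diameter less than $\eps'$. Since $J_g$ is compact, I extract a finite subcover $U_{z_1},\dots,U_{z_m}$ of $\{U_z\}_{z\in J_g}$, let $\delta(\eps)>0$ be a Lebesgue number of this cover (regarded as a cover of the compact metric space $J_g$), and set $n_0(\eps,\eps')=1+\max_{1\le i\le m}n_0(z_i,\eps')$. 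Then $\delta(\eps)$ depends only on $\eps$ and $n_0(\eps,\eps')$ only on $(\eps,\eps')$, as demanded.

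The one genuinely structural ingredient is the following elementary covering-space fact: since $g$ is a ramified self-covering of $S^2$, the iterate $g^{\circ n}$ is proper and open, so, $S^2$ being locally connected, every connected component $W$ of $(g^{\circ n})^{-1}(U)$ of a connected open set $U$ is mapped \emph{onto} $U$ by $g^{\circ n}$; indeed $W$ is clopen in $(g^{\circ n})^{-1}(U)$, so $g^{\circ n}|_W\colon W\to U$ is proper and open, whence $g^{\circ n}(W)$ is a nonempty clopen subset of the connected set $U$. Granting this, suppose $x,y\in J_g$ with $d(x,y)<\delta(\eps)$; then $\{x,y\}\subset U_{z_i}$ for some $i$. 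Given $n\ge 0$ and $x'$ with $g^{\circ n}(x')=x$, let $W$ be the connected component of $(g^{\circ n})^{-1}(U_{z_i})$ containing $x'$. Since $y\in U_{z_i}=g^{\circ n}(W)$, there is $y'\in W$ with $g^{\circ n}(y')=y$, and $d(x',y')\le\operatorname{diam}W$. Property (i) gives $\operatorname{diam}W<\eps$, which is assertion~(1); if in addition $n\ge n_0(\eps,\eps')$, then $n>n_0(z_i,\eps')$, and property (ii) gives $\operatorname{diam}W<\eps'$, which is assertion~(2). (For $n=0$, assertion~(2) is vacuous since $n_0(\eps,\eps')\ge 1$, while assertion~(1) holds because $\operatorname{diam}U_{z_i}<\eps$.)

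The argument is essentially bookkeeping once Proposition~\ref{bs} is in hand; the step deserving the most care is the surjectivity $g^{\circ n}(W)=U_{z_i}$. This is where it is essential that $g$ is a genuine ramified covering — hence open and proper — and not merely a continuous map, and where the connectedness of the chosen neighborhoods $U_z$ is used. The only other subtlety is the order of quantifiers: the neighborhoods $U_z$ and the thresholds $n_0(z,\eps')$ a priori depend on $z$, and it is precisely the passage to a finite subcover together with the Lebesgue number lemma that absorbs this dependence into constants depending only on $\eps$, respectively on $(\eps,\eps')$.
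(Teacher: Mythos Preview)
Your proof is correct and follows essentially the same route as the paper: apply Proposition~\ref{bs} pointwise, pass to a finite subcover of $J_g$, take $\delta(\eps)$ to be a Lebesgue number, and define $n_0(\eps,\eps')$ as the maximum of the individual thresholds. You are in fact more careful than the paper on the one nontrivial point---the surjectivity $g^{\circ n}(W)=U_{z_i}$ guaranteeing the existence of $y'$---which the paper's proof uses tacitly.
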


\begin{proof}
  1. The proof is based on the backward stability of $g$ on $J_g$ (Proposition \ref{bs}).
  Let $U_{x,\eps}$ be the neighborhood $U$ from Proposition \ref{bs}: it depends on $x\in J_g$ and $\eps>0$.
  The covering $\{U_{x,\,\eps}\}_{x\in J_g}$ has a finite subcovering $\Uc$.
  Let $\delta(\eps)$ be a Lebesgue number of $\Uc$.
  Assume that $d(x,y)<\delta(\eps)$.
  Then there is a neighborhood $U_{z,\,\eps}\in\Uc$ containing both $x$ and $y$.
  Take $x'$ such that $g^{\circ n}(x')=x$.
  Let $y'$ be the point in the connected component of $g^{-n}(U_{z,\,\eps})$ containing $x'$
  such that $g^{\circ n}(y')=y$.
  Then we have $d(x',y')<\eps$.

  2. For a point $x\in J_g$, let $\nu(x)$ be the positive integer such that for all
  $n\ge\nu(x)$, every component of $g^{-n}(U_{x,\,\eps})$ has diameter $<\eps'$.
  The existence of $\nu(x)$ follows from Proposition \ref{bs}.
  Define $n_0(\eps,\eps')$ as the maximum of $\nu(x)$ over all neighborhoods $U_{x,\,\eps}\in\Uc$.
  Assume that $d(x,y)<\delta(\eps)$ and $n\ge n_0(\eps,\eps')$.
  Then we have $d(x',y')<\eps'$ for $x'$ and $y'$ chosen as in part 1.
\end{proof}

\subsection{Triviality of fibers}
In this section, we prove that the continuous map $\phi$ from Theorem \ref{h-semiconj-g} is actually a homeomorphism.
Note first that the restriction of $\phi$ to every Fatou component of $g$ is injective.
Thus the fibers of $\phi$ lie entirely in $J_g$.
Moreover, we know that fibers are connected.
Assume that there exists at least one non-trivial fiber $Z$.
As a connected set containing more than one point, the set $Z$ is infinite.

\begin{lemma}
\label{invol}
  There is a self-homeomorphism $M\ne id$ of the dynamical sphere of $g$ such that
  $M^{\circ 2}=id$, and $g\circ M=g$.
  Moreover, we have $\phi\circ M=-\phi$.
\end{lemma}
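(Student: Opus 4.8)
The plan is to realize $M$ as the deck transformation of $g$ regarded as a degree-two branched covering, and then to show that the semi-conjugacy $\phi$ carries $M$ to the obvious involution $z\mapsto-z$ of the $h$-sphere; the one genuinely delicate point will be pinning down the sign globally. For the construction of $M$: the map $g$ is a ramified self-covering of $S^2$ of degree two, so by Riemann--Hurwitz it has exactly two simple critical points $c_1\neq c_2$, and over each critical value $g(c_i)$ the point $c_i$ is the only preimage. Hence $g$ restricts to a connected degree-two covering $S^2\setminus\{c_1,c_2\}\to S^2\setminus\{g(c_1),g(c_2)\}$, which is automatically regular; its unique nontrivial deck transformation is a fixed-point-free involution $M_0$ with $g\circ M_0=g$. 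Extending by $M_0(c_i)=c_i$ yields a self-homeomorphism $M$ of the dynamical sphere of $g$ with $M\neq id$, $M^{\circ 2}=id$, $g\circ M=g$, and fixed point set $\{c_1,c_2\}$; continuity of $M$ at each $c_i$ is clear in a local coordinate in which $g$ is $z\mapsto z^{2}$.

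Next I would relate $M$ to $\phi$. Normalizing $h=R_{a,b}$ gives $h(z)=h(-z)$, so $m_h\colon z\mapsto-z$ is the M\"obius involution characterized by $h\circ m_h=h$, and its fixed point set $\{0,\infty\}$ is the critical set of $h$. From $h\circ\phi=\phi\circ g$ (Theorem~\ref{h-semiconj-g}) together with $g\circ M=g$ we get $h\circ(\phi\circ M)=\phi\circ g\circ M=\phi\circ g=h\circ\phi$, hence at every point $x$ the value $\phi(M(x))$ lies in $h^{-1}(h(\phi(x)))=\{\phi(x),\,m_h(\phi(x))\}$. Therefore $S^2=A\cup B$, where the sets $A=\{x:\phi(M(x))=m_h(\phi(x))\}$ and $B=\{x:\phi(M(x))=\phi(x)\}$ are closed, with $A\cap B=\phi^{-1}(\{0,\infty\})$.

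The crux is to upgrade this to $A=S^2$, and this is the step I expect to be the main obstacle. First I would show $\phi^{-1}(\{0,\infty\})=\{c_1,c_2\}$: each $c_i$ lies in the Fatou set of $g$ (recall $g$ is critically finite with a super-attracting cycle, Section~\ref{s_Th_eq}), $\phi$ is injective near $c_i$, and $h\circ\phi=\phi\circ g$ then forces $h$ to have local degree two at $\phi(c_i)$, so $\phi(c_i)\in\{0,\infty\}$; since the fibers of $\phi$ are connected (Theorem~\ref{h-semiconj-g}) and $\phi$ is injective on each Fatou component of $g$, the values $\phi(c_1),\phi(c_2)$ are distinct and each of $\phi^{-1}(0)$, $\phi^{-1}(\infty)$ is a single point. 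Now $S^2\setminus\{c_1,c_2\}$ is connected and is the disjoint union of the relatively closed sets $A\setminus\{c_1,c_2\}$ and $B\setminus\{c_1,c_2\}$, so one of them must be empty. But the Fatou component $F$ of $g$ containing $c_1$ is $M$-invariant (as $M(c_1)=c_1$ and, since $g\circ M=g$, the map $M$ permutes Fatou components), and for any $x\in F\setminus\{c_1\}$ we have $M(x)\in F$ and $M(x)\neq x$, whence $\phi(M(x))\neq\phi(x)$ by injectivity of $\phi|_F$; thus $A\setminus\{c_1,c_2\}\neq\emptyset$. Consequently $B\subseteq\{c_1,c_2\}$, so $A=S^2$, that is, $\phi\circ M=m_h\circ\phi=-\phi$, which completes the proof.
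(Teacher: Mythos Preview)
Your proof is correct and follows essentially the same approach as the paper: construct $M$ as the nontrivial branch of $g^{-1}\circ g$, derive $\phi(M(x))=\pm\phi(x)$ from the semi-conjugacy and $h(-z)=h(z)$, and pin down the sign using the local behavior of $\phi$ near the critical points together with connectedness. Your version is more detailed---you explicitly carry out the $A/B$ connectedness argument and verify $\phi^{-1}(\{0,\infty\})=\{c_1,c_2\}$---whereas the paper compresses these steps into the single phrase ``by continuity, the sign is the same for all points.''
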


\begin{proof}
  Indeed, the multivalued map $g^{-1}\circ g$ splits into two single valued branches.
  One of these branches is the identity transformation.
  Let $M$ be the other branch.
  We have $g\circ M=g$ by definition.
  It follows that $g\circ M^{\circ 2}=g$, thus $M^{\circ 2}$ is either $M$ or the identity.
  Since $M\ne id$, we have $M^{\circ 2}\ne M$.
  Therefore, $M^{\circ 2}=id$.

  We have
  $$
  h\circ \phi\circ M=\phi\circ g\circ M=\phi\circ g=h\circ\phi,
  $$
  therefore $\phi(M(x))=\pm\phi(x)$ for all $x$ in the dynamical sphere of $g$.
  Near the critical points of $g$ (which are the $\phi$-preimages of $0$ and $\infty$),
  the map $\phi$ is a homeomorphism, and we have the minus sign.
  By continuity, the sign is the same for all points.
\end{proof}

\begin{proposition}
\label{fibinj}
  The restriction of $g$ to $Z$ is injective.
\end{proposition}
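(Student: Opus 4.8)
\emph{Proof proposal.} The plan is to combine the involution $M$ from Lemma~\ref{invol} with the triviality of the fibers of $\phi$ over $0$ and $\infty$. Suppose, for contradiction, that there are two \emph{distinct} points $x,y\in Z$ with $g(x)=g(y)$. Since $g$ has degree $2$ as a ramified covering, a point of the dynamical sphere of $h$ is a critical value of $g$ precisely when its unique $g$-preimage is a critical point; in particular, since $x\ne y$, the value $g(x)$ is a regular value and $g^{-1}(g(x))$ consists of exactly two points. By the very definition of $M$ as the non-identity branch of $g^{-1}\circ g$, these two points are $x$ and $M(x)$, so $y=M(x)$.

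Now I would use $\phi\circ M=-\phi$. Writing $Z=\phi^{-1}(v)$ and noting $x,y\in Z$, we obtain
$$
v=\phi(y)=\phi(M(x))=-\phi(x)=-v,
$$
hence $v\in\{0,\infty\}$. It remains to exclude this, i.e.\ to show that $Z$ is neither $\phi^{-1}(0)$ nor $\phi^{-1}(\infty)$. By Lemma~\ref{invol}, the critical points of $g$ are exactly the $\phi$-preimages of $0$ and $\infty$, and $\phi$ restricts to a homeomorphism on a neighborhood of each of them; consequently each such critical point is an isolated point of its own $\phi$-fiber. Since the fibers of $\phi$ are connected (Theorem~\ref{h-semiconj-g}), a fiber containing an isolated point is a singleton, so $\phi^{-1}(0)$ and $\phi^{-1}(\infty)$ are single points. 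But $Z$ is infinite (it is a connected set with more than one point), a contradiction. Therefore no pair of distinct points of $Z$ can have the same image under $g$, i.e.\ $g|_Z$ is injective.

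The argument is short, and the step I expect to require the most care is the verification that $v\notin\{0,\infty\}$, that is, that the $\phi$-fibers over $0$ and $\infty$ are trivial: this rests on reading off from Lemma~\ref{invol} both that the critical points of $g$ are the preimages of $0$ and $\infty$ \emph{and} that $\phi$ is locally injective there, and then invoking connectedness of fibers. One should also be careful with the degree-$2$ bookkeeping used to pass from $g(x)=g(y)$, $x\ne y$, to $y=M(x)$: the point is precisely that in degree $2$ the only preimage of a critical value is the critical point, so the two-point fiber $g^{-1}(g(x))$ must be $\{x,M(x)\}$ and hence contains $y$. Everything else is formal.
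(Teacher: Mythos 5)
Your proof is correct and follows the paper's own argument, using the involution $M$ from Lemma~\ref{invol} and the relation $\phi\circ M=-\phi$ to force $v=-v$ and hence $v\in\{0,\infty\}$; you just reach this directly from $y=M(x)$ while the paper detours through establishing $M(Z)=Z$ first. You also make explicit why $v\in\{0,\infty\}$ is impossible (the fibers over $0$ and $\infty$ are singletons, since $\phi$ is locally injective at the critical points of $g$ and the fibers are connected), a step the paper leaves as an unexplained ``contradiction.''
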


\begin{proof}
  Assume the contrary: $g(x)=g(y)$ for a pair of different points $x,y\in Z$.
  Let $M$ be the involution introduced in Lemma \ref{invol}.
  Then $y=M(x)$.
  For any point $z\in Z$, we have $\phi(x)=\phi(z)$, therefore, $\phi(y)=\phi(M(z))$, and $M(z)\in Z$.
  Thus $M(Z)=Z$.
  Applying $\phi$ to both sides of this equality, we obtain $\phi(Z)=\phi(M(Z))=-\phi(Z)$.
  Recall that $\phi(Z)$ is just a point.
  It follows that $\phi(Z)=\{0\}$ or $\{\infty\}$, a contradiction.
\end{proof}

The following argument is a modification/generalization of one from \cite{Rees_description}.

\begin{proposition}
\label{closeim}
 Let $\delta_0$ be the number introduced in Proposition \ref{delta0}.
 For every $\eps>0$, there exist $x,y\in J_g$ such that
 $$
  d(x,y)\ge\delta_0/2,\quad d(g(x),g(y))<\eps,\quad \phi(x)=\phi(y).
 $$
\end{proposition}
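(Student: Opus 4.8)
The plan is to argue by contradiction, feeding the involution $M$ of Lemma~\ref{invol} into the backward stability estimates of Propositions~\ref{bs} and~\ref{n0}. I first record three preliminary facts about the fibers of $\phi$. Each fiber is a compact connected subset of $J_g$ (Theorem~\ref{h-semiconj-g}, together with injectivity of $\phi$ on Fatou components). Next, $g$ carries the fiber $\phi^{-1}(p)$ \emph{onto} $\phi^{-1}(h(p))$: the inclusion $g(\phi^{-1}(p))\subseteq\phi^{-1}(h(p))$ is immediate from $h\circ\phi=\phi\circ g$, and conversely, if $\phi(w)=h(p)$ then $w=g(w')$ for some $w'$ by surjectivity of $g$, with $\phi(w')\in h^{-1}(h(p))=\{p,-p\}$ (note $h(z)=h(-z)$, since the critical points of $h$ are $0$ and $\infty$); if $\phi(w')=-p$, replace $w'$ by $M(w')$, using $g\circ M=g$ and $\phi\circ M=-\phi$. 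Finally, $g$ is injective on every non-trivial fiber by the argument of Proposition~\ref{fibinj} (a pair of distinct points in one fiber with the same $g$-image would be swapped by $M$, forcing their common $\phi$-image to be fixed by $z\mapsto-z$, i.e.\ to lie in $\{0,\infty\}$, contradicting $\phi(J_g)\subseteq J_h$); and since $J_g$ contains no critical point of $g$ and $M(J_g)=J_g$, applying Proposition~\ref{delta0} to $g(z)=g(M(z))$ gives $d(z,M(z))\ge\delta_0$ for all $z\in J_g$.

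Since the constant $\delta_0$ of Proposition~\ref{delta0} may be replaced by any smaller positive number, I may assume $\delta_0\le\mathrm{diam}(Z)$, so that $Z$ is a non-trivial fiber of diameter $\ge\delta_0$; it then suffices to produce, for each $\eps>0$, points $x,y\in J_g$ with $\phi(x)=\phi(y)$, $d(x,y)\ge\delta_0/2$ and $d(g(x),g(y))<\eps$. I will use two uniform consequences of backward stability, obtained from Proposition~\ref{bs} by a finite subcover of $J_g$ exactly as in Proposition~\ref{n0}. First: there is $\delta^*>0$ such that every connected $C\subseteq J_g$ with $\mathrm{diam}(C)<\delta^*$ has all components of $g^{-m}(C)$ of diameter $<\mathrm{diam}(Z)$, for every $m\ge0$; since $Z$ is a connected component of $g^{-n}\big(\phi^{-1}(h^{n}(\phi(Z)))\big)$, this forces $\mathrm{diam}\big(\phi^{-1}(h^{n}(\phi(Z)))\big)\ge\delta^*$ for all $n$, i.e.\ the fibers along the forward orbit of $Z$ stay macroscopic. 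Second: for every $\eps>0$ there is $n$ so large that each fiber $\phi^{-1}(q)$ with $q\in h^{-n}(\phi(Z))$ has diameter $<\eps$, while $g^{\circ n}$ maps it homeomorphically onto $Z$.

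The heart of the matter, which I would carry out by adapting the argument of \cite{Rees_description}, is the following: assume the conclusion fails for some $\eps>0$, so that $d(g(x),g(y))\ge\eps$ whenever $\phi(x)=\phi(y)$ and $d(x,y)\ge\delta_0/2$, and (shrinking $\eps$) $\eps<\delta_0/2$. Pick $n$ as in the second remark, and consider the chain of fibers running from some $\phi^{-1}(q)$, $q\in h^{-n}(\phi(Z))$, of diameter $<\eps$, up through $g(\phi^{-1}(q)),\dots,g^{\circ n}(\phi^{-1}(q))=Z$, of diameter $\ge\delta_0$; the two $g$-preimages of each fiber in this chain split as a fiber $F$ and its $M$-image $M(F)$, whose points are pairwise at distance $\ge\delta_0$ by the first paragraph. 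Tracking a pair of points of $Z$ realizing $\mathrm{diam}(Z)$ backward along such chains, and invoking the assumed uniform non-collapsing on the fibers of diameter $\ge\delta_0/2$ together with injectivity of $g$ on fibers, one transports the estimate downward and concludes that the $g$-preimage components of $Z$ cannot shrink, contradicting Proposition~\ref{bs}(2). I expect this quantitative bookkeeping to be the main obstacle: one must control, over many iterates, how a pair of points, its $\phi$-fiber, and the distribution of its $g$-preimages between a fiber and its $M$-image interact, so that ``non-collapsing on fibers'' is converted, through backward stability, into a failure of the uniform shrinking of preimages.
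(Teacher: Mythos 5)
Your preliminary observations are correct: fibers are compact connected subsets of $J_g$, $g$ maps fibers onto fibers (via the involution $M$ and $\phi\circ M=-\phi$), $g$ is injective on them, and $d(z,M(z))\ge\delta_0$ on $J_g$. The two uniform backward-stability consequences you record are also sound. But the ``heart of the matter'' is not merely quantitative bookkeeping; as sketched, the argument cannot close. Your contradiction hypothesis --- $d(g(x),g(y))\ge\eps$ whenever $\phi(x)=\phi(y)$ and $d(x,y)\ge\delta_0/2$, with $\eps<\delta_0/2$ --- does not iterate: it only forces a pair at distance $\ge\delta_0/2$ to have $g$-image pair at distance $\ge\eps$, not $\ge\delta_0/2$. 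Tracking a pair along your chain of fibers from the small preimage fiber up to $Z$, at the first index $k$ where the tracked pair reaches distance $\ge\delta_0/2$, the distance at index $k-1$ is merely $<\delta_0/2$, and there is no lever forcing it below $\eps$; it may sit anywhere in $[\eps,\delta_0/2)$, where your hypothesis is silent. So no contradiction with Proposition~\ref{bs}(2) materializes, and diameters of fibers along the chain need not behave monotonically.

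The paper supplies exactly the missing device: a dichotomy with no intermediate regime, obtained by comparing at each backward step the \emph{fiber preimage} $y_k$ of $y_{k+1}$ with the \emph{closest-to-$x_k$ preimage} $y'_k$. If they coincide, backward stability (applied to a close pair at the top, with the working $\eps$ set to $\delta_0/2$) gives $d(x_k,y_k)<\delta_0/2$; if not, $d(y_k,y'_k)\ge\delta_0$ by Proposition~\ref{delta0} and the triangle inequality gives $d(x_k,y_k)\ge\delta_0-\eps=\delta_0/2$. Moreover the proof is direct rather than by contradiction: it fixes a finite set $Z_0\subset Z$ of $N$ points ($N$ chosen so that any $N$-point set in $J_g$ contains a pair at distance $<\delta(\eps)$), applies pigeonhole in $g^{\circ n_0}(Z_0)$ to find such a close pair, and traces it backward to $Z_0$. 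If every backward step stays ``close'', Proposition~\ref{n0}(2) forces $d(x_0,y_0)<\eps'$, contradicting the minimal spacing $\eps'$ of $Z_0$; while a single ``far'' step immediately produces the required $(x,y)$ with $\phi(x)=\phi(y)$, $d(x,y)\ge\delta_0/2$, and $d(g(x),g(y))<\eps$. Some counting-plus-dichotomy mechanism of this kind is indispensable; your sketch, as you acknowledge yourself, leaves precisely it unfilled.
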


Recall our assumption that $\phi$ has a nontrivial fiber $Z$.
The statement of the proposition depends on this assumption, although $Z$ was not mentioned at all.

\begin{proof}
  Take $\eps=\delta_0/2$.
  Let $N$ be a positive integer such that every subset of $J_g$ with at least $N$ points
  has a pair of distinct points on distance $<\delta(\eps)$ (where $\delta(\eps)$ is
  as in Proposition \ref{n0}).
  Consider a finite subset $Z_0$ of $Z$ containing $N$ points.
  Define $\eps'$ to be the minimal distance between different points in $Z_0$.
  Set $n_0=n_0(\eps,\eps')$ (see Proposition \ref{n0}).
  The set $f^{\circ n_0}(Z_0)$ has cardinality $N$ (by Proposition \ref{fibinj}), therefore, there is a pair of points
  $x_{n_0}$, $y_{n_0}$ in this set such that $d(x_{n_0},y_{n_0})<\delta(\eps)$.
  Note that $\delta(\eps)\ge\eps$, in particular, $d(x_{n_0},y_{n_0})<\eps$.

  For every $k=0,\dots,n_0$, we can define $x_k$ and $y_k$ inductively by the following relations:
  $$
   x_k,y_k\in f^{\circ k}(Z_0),\quad f(x_k)=x_{k+1},\quad f(y_k)=y_{k+1}.
  $$
  Then either $y_k$ is the closest to $x_k$ preimage of $y_{k+1}$ and,
  in particular, $d(x_k,y_k)<\eps$, or $d(x_k,y_k)\ge\delta_0-\eps=\delta_0/2$.
  Indeed, let $y'_k$ be the closest to $x_k$ preimage of $y_{k+1}$.
  If $y'_k\ne y_k$, then
  $$
   d(x_k,y_k)\ge d(y_k,y'_k)-d(y'_k,x_k)\ge\delta_0-\eps.
  $$
  Suppose first that $d(x_k,y_k)\ge\delta_0/2$ for some $k$.
  Then $x=x_k$ and $y=y_k$ have the required properties.
  Suppose now that $y_k$ is the closest to $x_k$ preimage of $y_{k+1}$, for all $k$.
  Then, in particular, we have $d(x_0,y_0)<\eps'$.
  But this contradicts our choice of $\eps'$, since $x_0,y_0\in Z_0$.
\end{proof}

We will now get a contradiction with our assumption that $Z$ is non-trivial.
Indeed, let $x_n$ and $y_n$ be points in $J_g$ such that
$$
d(x_n,y_n)\ge\delta_0/2,\quad d(g(x_n),g(y_n))<1/n,\quad \phi(x_n)=\phi(y_n).
$$
The existence of such points follows from Proposition \ref{closeim}.
Choosing suitable subsequences if necessary, we can arrange that both $x_n$ and $y_n$ converge
to $x$ and $y$, respectively.
For these limit points, we must have
$$
d(x,y)\ge\delta_0/2,\quad g(x)=g(y),\quad \phi(x)=\phi(y).
$$
In other terms, $x$ and $y$ belong to the same fiber of $\phi$, and
the restriction of $g$ to this fiber is not injective.
This contradicts Proposition \ref{fibinj}.
The contradiction shows that all fibers of $\phi$ are trivial, thus $\phi$ is a homeomorphism.

This finishes the proof of Theorem \ref{reg_f} and Theorem \ref{typeC}.

\section{Holomorphic regluing}
\label{s_hr}

{\footnotesize In this section, we just sketch main characters of
holomorphic theory of regluing, and give the most basic constructions.}

\subsection{An example}
\label{s_ex}
We consider first a simple example, where we define an explicit sequence of approximations to a regluing.
This sequence will consist of partially defined but holomorphic functions.

Let $f$ be the quadratic polynomial $z\mapsto z^2-6$.
The Julia set $J_f$ of $f$ is a Cantor set that lies on the real line.
Recall that the biggest component of the complement to $J_f$ in $[-3,3]$ is $(-\sqrt{3},\sqrt{3})$.
Suppose we want to reglue the segment $[-\sqrt{3},\sqrt{3}]$, thus connecting two parts of $J_f$.
This is done by the following map:
$$
j(z)=\sqrt{z^2-3}
$$
(which is understood as a branch over the complement to $[-\sqrt 3,\sqrt 3]$ that is tangent to
the identity at infinity).
The inverse map is given by the formula $j^{-1}(z)=\sqrt{z^2+3}$, and is defined on the complement to $[-i\sqrt{3},i\sqrt{3}]$.

Consider the composition $f\circ j^{-1}$.
It turns out that this function extends to a quadratic polynomial!
Indeed, we have
$$
f(\sqrt{z^2+3})=(z^2+3)-6=z^2-3.
$$
We denote the polynomial in the right-hand side by $p_{-3}$; in general, $p_c$
stands for the quadratic polynomial $z\mapsto z^2+c$.
From a more conceptual viewpoint, it suffices to see that $f\circ j^{-1}$ extends
continuously to the segment $[-i\sqrt{3},i\sqrt{3}]$.
This follows from the fact that $f$ folds the segment $[-\sqrt{3},\sqrt 3]$ at $0$.
It is a good news that $f\circ j^{-1}$ is a quadratic polynomial.
However, it is very difficult to see the dynamics of a composition, even if the dynamics of the
both factors is well-understood.

In fact, what we really want to consider is not the composition but the ``conjugation'' $j\circ f\circ j^{-1}$.
Since $f\circ j^{-1}$ extends to a polynomial, we define the new function $f_1$ as $j\circ p_{-3}$,
which is defined on a larger set than $j\circ f\circ j^{-1}$.
A bad news is that the function $f_1$ is not continuous.
The discontinuity of this function is due to the discontinuity of $j$.
Actually, the function $f_1$ is defined and holomorphic on the complement to two simple curves ---
the pullbacks of $[-\sqrt 3,\sqrt 3]$ under $p_{-3}$, and it ``reglues'' these curves in a sense.
We would like to get rid of this discontinuity by ``conjugating'' $f_1$ with yet another regluing map.
To this end, we need an injective holomorphic function $j_1$ defined on the domain of $f_1$
and having the same type of discontinuity at the two special curves, where $f_1$ is undefined.
We cannot take $j_1=f_1$ because $f_1$ is, in general, two-to-one
(it is the square root of a degree four polynomial).
However, we can take
$$
j_1=\sqrt{f_1-f_1(0)}.
$$
The square root may look disturbing but it does not actually create any ramification,
so that the function $j_1$ is not a multivalued function, it is just a union of two
single valued branches.
These branches are still not everywhere defined (they are defined exactly on the domain of $f_1$)
but they are single valued!
Indeed, the square root has ramification points exactly where $f_1$ is equal to $f_1(0)$ or $\infty$.
But at all such places, namely at $0$ and at $\infty$, the function $f_1$ has simple critical points.
Thus at these places, the function $j_1$ looks like $\sqrt{u^2}$, where $u$ is some local coordinate,
and this does not have any ramifications.
It is also easy to see that $j_1$ has no nontrivial monodromy around the curves, on which it is undefined
because the square root does not have monodromy around these curves.
We choose the branch of $j_1$ that is tangent to the identity at infinity.

Now finding $f_1\circ j_1^{-1}$ is easy:
Set $w=j_1(z)$, then $w=\sqrt{f_1(z)-f_1(0)}$, and
$$
f_1\circ j_1^{-1}(w)=f_1(z)=w^2+f_1(0)
$$
for all $w$ such that the left-hand side is defined.
So this is again a quadratic polynomial!
By the way, the number $c_1=f_1(0)$ is easy to compute:
$$
f_1(0)=j(-3)=-\sqrt 6=-2^{1/2}\cdot 3^{1/2}.
$$
The only non-trivial part is the sign of the square root.
It is determined by our choice of the branch for $j$ but we skip the corresponding computation.

Next, we define the function $f_2=j_1\circ p_{c_1}$.
Note that $j_1=p_{c_1}^{-1}\circ f_1$ on the domain of $f_1$.
It follows that
$$
f_2``="p_{c_1}^{-1}\circ f_1\circ p_{c_1}.
$$
This formula looks nice but one needs to be very careful, because the expression in the right-hand side is ambiguous
(it should be considered as a single-valued branch over some domain, but then it does not carry any information on the
domain of definition).
The right understanding of this formula is that $p_{c_1}^{-1}\circ f_1$ should be thought of as $j_1$ but then
it coincides with the formula $f_2=j_1\circ p_{c_1}$ that we have used to define $f_2$.
In our formulas, one can recognize the Thurston iteration but in a slightly unusual context because
we deal with discontinuous holomorphic functions rather than with continuous non-holomorphic functions.
There is a precise relation between what is happening and Thurston's theory
(better seen on other examples, because, in the case under consideration, Thurston's theory does not have to say much).

Continuing the same process, we obtain a sequence $f_n$ of functions,
each defined on the complement to a finite union of simple curves, with the following recurrence property:
$$
f_{n+1}=j_n^{-1}\circ p_{c_n},\quad c_n=f_n(0),
$$
where $j_n$ is a branch of $\sqrt{f_n-c_n}$ defined on the domain of $f_n$ and tangent to
the identity at infinity.

In our example, we can compute the numbers $c_n$ explicitly.
Consider the sequence $f_1^{\circ m}(0)$.
This sequence stabilizes at the second term:
$$
-\sqrt 6,\ \sqrt 6,\sqrt 6,\dots,\sqrt 6,\dots
$$
The sequence $f_2^{\circ m}(0)$ can be obtained from the first sequence as follows:
$$
f_2^{\circ m}(0)=\sqrt{f_1^{\circ {m+1}}(0)-f_1(0)}.
$$
Thus all terms are equal to $\pm\sqrt{2\sqrt 6}=2^{1-1/4}\cdot 3^{1/4}$.
The determination of signs is a bit tricky, but the right signs are the following:
the first sign is minus, and all other signs are plus.
Continuing in the same way, we obtain that
$$
f_n^{\circ m}(0)=\pm 2^{1-1/2^n}\cdot 3^{1/2^n},
$$
where the first sign is minus, and all other signs are plus.
In particular, $c_n=f_n(0)\to -2$ as $n\to\infty$, and the convergence is exponentially fast.

We see that the sequence of polynomials $p_{c_n}$ converges to the polynomial $g:z\mapsto z^2-2$.
This is the so called {\em Tchebychev polynomial}.
The Julia set of this polynomial is equal to the segment $[-2,2]$.
Note that the orbit of the critical point $0$ is finite: $0\mapsto -2\mapsto 2$,
and $2$ is fixed.

Define a sequence of maps $\Phi_n=j_n\circ\dots j_1\circ j$.
The map $\Phi_n$ is defined and holomorphic on the complement to finitely many simple curves.
The main property of $\Phi_n$ is that $f_{n+1}=\Phi_n\circ f\circ\Phi_n^{-1}$ wherever the
right-hand side is defined, which follows from the definition.
Note also that $p_{c_n}=\Phi_{n-1}\circ f\circ\Phi_n^{-1}$ wherever the left-hand side is defined.
In our example, it can be shown that the sequence $\Phi_n$ converges uniformly to a map $\Phi$ defined
on the complement to countably many curves --- in our case, to
all iterated pullbacks of $[-\sqrt 3,\sqrt 3]$ under $f$.
The map $\Phi$ reglues all these horizontal segments into ``vertical curves''
so that the Julia set of $f$, which is a Cantor set, gets glued into the segment $[-2,2]$ under $\Phi$.
Passing to the limit as $n\to\infty$ in the identity $p_{c_n}=\Phi_{n-1}\circ f\circ\Phi_n^{-1}$,
we obtain that
$$
g=\Phi\circ f\circ\Phi^{-1}.
$$
Note that the right-hand side is only defined on the complement to countably many curves,
but it extends to the complex plane as a holomorphic function.

\section{Thurston's algorithm for quadratic maps}
In the example above, we constructed an explicit sequence of approximations to a topological regluing.
We will now define these approximations in a more general context.
First, we fix some conventions.
Let $F:Dom_F\to\overline\C$ and $G:Dom_G\to\overline\C$ be functions defined on certain subsets
$Dom_F$, $Dom_G$ of $\overline\C$.
These subsets are not necessarily open.
We write $F\subseteq G$ if $Dom_F\subseteq Dom_G$ and $F(z)=G(z)$ at every point $z\in Dom_F$.
The equality $F=G$ means $F\subseteq G$ and $G\subseteq F$.
For any set $A\subset\overline\C$, we write $F(A)$ for $\{F(z)\ |\ z\in Dom_F\cap A\}$
and $F^{-1}(A)=\{z\in Dom_F\ |\ F(z)\in A\}$.
Finally, define the composition $F\circ G$ as the map defined on $Dom_{F\circ G}=G^{-1}(Dom_F)$
and such that $F\circ G(z)=F(G(z))$ for all $z\in Dom_{F\circ G}$.

We will now describe a certain generalization of Thurston's algorithm to the case of partially defined ramified coverings.
Let $U$ be an open subset of the Riemann sphere, and let a function $F:U\to\overline\C$ be a ramified covering over its image of degree 2
with two critical points.
Assume that the critical orbits of $F$ are well-defined (hence, they lie in $U$).
Under this assumption, we produce another partially defined ramified covering with well-defined critical orbits.

\begin{lemma}
  For any pair of different points $a,b\in\overline\C$, there exists a quadratic rational function
  with critical values $a$ and $b$.
\end{lemma}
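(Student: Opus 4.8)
The plan is to reduce the statement to the case of the squaring map by post-composing with a M\"obius transformation. The key observation is that for any rational function $R$ and any M\"obius transformation $M$, the map $M\circ R$ has exactly the same critical points as $R$ (since $(M\circ R)'=(M'\circ R)\cdot R'$ and $M'$ never vanishes), while its critical values are the $M$-images of the critical values of $R$; moreover $\deg(M\circ R)=\deg R$, so $M\circ R$ is again a quadratic rational function whenever $R$ is.

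Next I would exhibit one concrete quadratic rational function with an explicitly known pair of critical values. The squaring map $s(z)=z^2$, regarded as a degree-two self-map of $\overline\C$, has critical points $0$ and $\infty$: near $\infty$ one passes to the coordinate $w=1/z$, in which $s$ again reads $w\mapsto w^2$, so $\infty$ is a (simple) critical point as well. The corresponding critical values are $s(0)=0$ and $s(\infty)=\infty$, and these are distinct.

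Finally, given distinct points $a,b\in\overline\C$, I would pick a M\"obius transformation $M$ with $M(0)=a$ and $M(\infty)=b$; such $M$ exists because a M\"obius transformation may be chosen to send a third point to an arbitrary fourth point, and the hypothesis $a\ne b$ makes the prescription non-degenerate. Then $Q=M\circ s$ is a quadratic rational function whose critical points are $0$ and $\infty$ and whose critical values are $M(s(0))=M(0)=a$ and $M(s(\infty))=M(\infty)=b$, as desired.

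There is essentially no obstacle here; the statement is elementary. The only points that deserve a line of justification are that $\infty$ is genuinely a critical point of $s$ (checked in the coordinate $w=1/z$) and that post-composition with a M\"obius transformation neither changes the set of critical points nor lowers the degree. A slightly more hands-on alternative, if one prefers to avoid invoking the behavior of critical points under composition, would be to write $Q$ out in coordinates once $M$ is chosen and verify directly that $Q$ ramifies precisely over $a$ and $b$.
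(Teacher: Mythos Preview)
Your argument is correct. The paper takes the explicit route you mention at the end: it simply writes down
\[
R_{a,b}(z)=\frac{az^2-b}{z^2-1}\quad(a,b\ne\infty),\qquad R_{\infty,b}(z)=z^2+b,\qquad R_{a,\infty}(z)=z^{-2}+a,
\]
and checks that the critical points are $0,\infty$ with values $a,b$. Your version is the same construction packaged conceptually: choosing $M$ with $M(0)=a$, $M(\infty)=b$ and forming $M\circ s$ yields exactly these formulas (up to the obvious relabeling), and the case split at $\infty$ is absorbed into the choice of $M$. The paper's explicit presentation has the side benefit that the normal form $R_{a,b}$ is reused throughout the later sections; your argument is cleaner as a pure existence proof.
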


The statement is true and well known for any degree but the quadratic case is more explicit.

\begin{proof}
  Set
  $$
  R_{a,b}(z)=\left\{
  \begin{array}{cl}
    \frac{az^2-b}{z^2-1},& a,b\ne \infty\\
    z^2+b,& a=\infty,\ b\in\C\\
    z^{-2}+a,& a\in\C,\ b=\infty
  \end{array}\right.
  $$
  Then the critical points of $R_{a,b}$ are $\infty$ and $0$, and we have
  $$
  R_{a,b}(\infty)=a,\quad R_{a,b}(0)=b.
  $$
\end{proof}

Let $R$ be a quadratic rational function, whose critical values coincide with those of $F$.
Then the multivalued function $R^{-1}\circ F$ splits into two single valued branches.
The branches differ by post-composition with a M\"obius transformation.
However, we already have this degree of freedom in the definition of $R^{-1}\circ F$
($R$ is only defined up to pre-composition with a M\"obius transformation).
Let $J$ denote one of the branches of $R^{-1}\circ F$.
The domain of $J$ coincides with the domain of $F$ (i.e. with $U$).
Define $G=J\circ R$.
The domain of $G$ is $R^{-1}(U)$.
It is easy to see that $F\circ J^{-1}\subseteq R$ and, therefore, $J\circ F\circ J^{-1}\subseteq G$.
From this formula, it is also clear that $G$ is defined uniquely up to M\"obius conjugation.

We need to prove that the critical orbits of $G$ are well-defined.
Take any critical point of $F$.
Without loss of generality, we can assume that this critical point is $0$.
Recall that the $F$-orbit of $0$ is in the domain of $F$ and hence in the domain of $J$.
The points $J\circ F^{\circ n}(0)$ form the critical orbit of $J\circ F\circ J^{-1}$.
It is well-defined, therefore, the critical orbit of $G$ is also well defined and is the same.
The operation that takes $F$ and produces $G$ is the step in the (generalized) Thurston algorithm.
We will apply this step repeatedly in a situation described below.

\subsection{Holomorphic regluing of quadratic polynomials}
\label{s_hroqp}

Let $f$ be a quadratic polynomial.
By a suitable change of coordinates, we can write $f$ in the form $p_c(z)=z^2+c$.
Note that $c=f(0)$.
Consider a simple path $\alpha_0:[-1,1]\to\overline\C$ such that $\alpha_0(-t)=-\alpha_0(t)$ for all $t\in[-1,1]$.
The path $\alpha_0$ can be interpreted as an $\alpha$-path
$$
(t_1,t_2)\mapsto \alpha_0(t_1).
$$
We will sometimes use this interpretation when referring to topological regluing, e.g.
regluing of $\alpha_0$ means regluing of this $\alpha$-path.

If we want to do a topological regluing of the pullbacks of $\alpha_0$,
then we need to assume that $f^{\circ n}\circ\alpha_0[-1,1]$ is disjoint from $\alpha_0[-1,1]$.
(it follows that $\alpha_0[-1,1]$ is disjoint from the forward orbit of the critical value $c$).
Note that a path $\alpha_0$ from Section \ref{s_trqp} satisfies this assumption.
However, we will now work under the more general assumption that the forward orbit of $\alpha_0(1)$
is disjoint from $\alpha_0[-1,1]$.
We will define a sequence of functions holomorphic on the complements to finite unions of curves.
These function will (in many cases) approximate maps generalizing topological regluings
to the case of intersecting curves.

Consider the branch
$$
j(z)=\sqrt{z^2-\alpha_0(1)^2}
$$
that is defined on the complement to $\alpha_0[-1,1]$ and tangent to the identity at $\infty$.
The inverse function $j^{-1}$ is defined on the complement to $\beta_0[-1,1]$, for some
simple path $\beta_0$, and is given by the formula $j^{-1}(w)=\sqrt{w^2+\alpha_0(1)^2}$.
The composition $f\circ j^{-1}$ extends to the Riemann sphere as the quadratic polynomial $p_{c_0}$, where $c_0=\alpha_0(1)^2+c$:
$$
f\circ j^{-1}(z)=z^2+\alpha_0(1)^2+c=p_{c_0}(z),\quad c_0=\alpha_0(1)^2+c=f(\alpha_0(1)).
$$
Set $f_1=j\circ p_{c_0}$.
The function $f_1$ is defined on the complement to $p_{c_0}^{-1}(\alpha_0[-1,1])$.
This is a pair of disjoint simple curves since $\alpha_0[-1,1]$ does not contain $c_0=f(\alpha_0(1))$,
hence $0$ is not in $p_{c_0}^{-1}(\alpha_0[-1,1])$.
For the same reason, the number $c_1=f_1(0)$ is well defined.
From the relations $f\supseteq p_{c_0}\circ j$ and $f_1=j\circ p_{c_0}$ it follows that
$$
f_1^{\circ n}=j\circ p_{c_0}\circ\dots\circ j\circ p_{c_0}\subseteq j\circ f^{\circ\, n-1}\circ p_{c_0}.
$$
Note that the critical points of $f_1$ are $0$ and $\infty$, the infinity being fixed.
The points $f_1^{\circ n}(0)$ are well defined for all $n\ge 0$.
We can prove this by induction as follows.
For $n=1$, the statement is already proved.
Suppose now that $f_1^{\circ n}(0)$ is defined.
Then it is equal to $j\circ f^{\circ\, n-1}(c_0)=j\circ f^{\circ n}(\alpha_0(1))$.
Clearly, $p_{c_0}\circ j\circ f^{\circ n}(\alpha_0(1))$ is also defined.
Since $p_{c_0}\circ j\subseteq f$, the latter is equal to $f^{\circ\, n+1}(\alpha_0(1))$.
Finally, by our assumption on $\alpha_0$, the point $f^{\circ\, n+1}(\alpha_0(1))$ is disjoint from $\alpha_0[-1,1]$.
Therefore, $j\circ f^{\circ\, n+1}(\alpha_0(1))$ is also defined.
We can conclude that
$$
f_1^{\circ\, n+1}(0)=j\circ p_{c_0}\circ f_1^{\circ n}(0)
$$
is defined and equal to $j\circ f^{\circ\, n+1}(\alpha_0(1))$.

We can now apply Thurston's algorithm to $f_1$.
Using Thurston's algorithm, we obtain a sequence of functions $f_n$, $j_n$ and $R_n$ for $n=1,2,3,\dots$.
The function $R_n$ is a quadratic rational function, whose critical values coincide with those of $f_n$.
We can take $R_n=p_{c_n}$, where $c_n=f_n(0)$.
The function $j_n$ is a branch of $R_n^{-1}\circ f_n=\sqrt{f_n-c_n}$.
In the polynomial situation, we can fix the branch by imposing the following property:
$j_n$ is tangent to the identity at infinity.
Finally, the next function $f_{n+1}$ is defined as $j_n\circ R_n$.
Suppose that Thurston's algorithm converges, i.e. the sequence $c_n$ converges to a complex number $c_\infty$.
We say in this case that the quadratic polynomial $p_{c_\infty}$ is obtained from $f$
by {\em weak holomorphic regluing} along $\alpha_0$.

Define a sequence of maps $\Phi_n=j_n\circ\dots j_1\circ j$.
The map $\Phi_n$ is defined and continuous on the complement to finitely many simple paths.
The main property of $\Phi_n$ is that $f_{n+1}\supseteq\Phi_n\circ f\circ\Phi_n^{-1}$,
which follows from the relation $f_{n+1}=j_n\circ R_n\supseteq j_n\circ f_n\circ j_n^{-1}$.
Note, however, that $f_{n+1}$ is defined on a larger set than $\Phi_n\circ f\circ\Phi_n^{-1}$.
There is a countable union $Z$ of simple curves such that all $\Phi_n$ are defined on the complement to $Z$.
The complement to $Z$ is, in general, neither open nor connected.
However, it is given as the intersection of countably many open dense sets.
Therefore, $\overline\C-Z$ is dense.
Suppose that the sequence $\Phi_n$ converges to some function $\Phi$ uniformly on $\overline\C-Z$.
In this case, we say that $p_{c_\infty}$ is obtained from $f$ by
{\em  strong holomorphic regluing} along $\alpha_0$.
The limit $\Phi$ may or may not be a topological regluing.
For one thing, it may fail to be injective.
Also, the domain of $\Phi$ is the complement to countably many simple curves but not necessarily disjoint.
In particular, the domain may be disconnected.
Thus, strong holomorphic regluing may be used to define more general surgeries than topological regluing.

\subsection{Holomorphic regluing of quadratic rational functions}
\label{s_hroqrf}
Consider a quadratic rational function $f$ that is not a polynomial and not conjugate to $z\mapsto 1/z^2$.
Then we can reduce $f$ to the form
$$
R_{a,b}(z)=\frac{az^2-b}{z^2-1}
$$
Thus, in this section, we assume that $f=R_{a,b}$.
Note that $a=f(\infty)$ and $b=f(0)$.

Consider a simple path $\alpha_0:[-1,1]\to\overline\C$ such that $\alpha_0(-t)=-\alpha_0(t)$ for all $t\in[-1,1]$.
We will assume that the orbits of $1$ and $\alpha_0(1)$ under $f$ are disjoint from $\alpha_0[-1,1]$.
Note that $f(1)=\infty$ so that the orbit of $1$ contains the orbit of the critical point $\infty$.
Consider the branch
$$
j(z)=\sqrt{\frac{z^2-\alpha_0(1)^2}{1-\alpha_0(1)^2}}
$$
defined over the complement to $\alpha_0[-1,1]$ and such that $j(1)=1$.
The composition $f\circ j^{-1}$ extends to the Riemann sphere as a quadratic rational function $R$.
An easy computation shows that $R(0)=f(\alpha_0(1))$ and $R(\infty)=f(\infty)$.
Set $f_1=j\circ R$.
This function is defined on the complement to a pair of disjoint simple curves.
Exactly as in the case of quadratic polynomials, we use the formulas $f\supseteq R\circ j$ and $f_1=j\circ R$
to prove the formula
$$
f_1^{\circ n}\subseteq j\circ f^{\circ\, n-1}\circ R.
$$
The critical orbits of $f_1$ are well defined.
Indeed, $f_1^{\circ n}(0)=j\circ f^{\circ n}(\alpha_0(1))$, which can be proved exactly as for polynomials,
$f_1^{\circ n}(\infty)=j\circ f^{\circ n}(\infty)$, and $j$ is defined on the orbits of $\alpha_0(1)$ and $\infty$.
Note also that $f_1(1)=\infty$.

Applying Thurston's algorithm to $f_1$, we obtain a sequence of functions $f_n$, $j_n$ and $R_n$.
The function $f_n$ is defined and holomorphic on the complement to finitely many simple curves,
its critical points are $0$ and $\infty$, and we have $f_n(1)=\infty$.
The function $R_n$ is a rational function, whose critical values coincide with those of $f_n$.
We choose $R_n$ to have the form $R_{a_n,b_n}$ for some complex numbers $a_n$ and $b_n$.
The multivalued function $R_n^{-1}\circ f_n$ splits into two single valued branches on the domain of $f_n$.
Since $R_n(1)=\infty$ and $f_n(1)=\infty$, we can choose a branch $j_n$ with the property $j_n(1)=1$
It is clear that $j_n(0)=0$ and $j_n(\infty)=\infty$.
Finally, the next function $f_{n+1}$ is defined inductively as $j_n\circ R_n$.
Its critical points are $0$ and $\infty$, and we have $f_{n+1}(1)=j_n(\infty)=\infty$.

Suppose that both sequences $a_n$ and $b_n$ converge to some complex numbers $a_\infty$ and $b_\infty$.
Then we say that the rational function $R_{a_\infty,b_\infty}$ is obtained from $f$ by
{\em weak holomorphic regluing} along $\alpha_0$.
We can also define strong holomorphic regluing of quadratic rational functions
in the same way as for quadratic polynomials.

\subsection{Some questions}
The main question is: what conditions guarantee the existence of weak/strong holomorphic regluing?
In the case where $f_1$ is critically finite, this question is closely related to Thurston's theory.
Suppose that strong holomorphic regluing exists.
What can be said about the limit map $\Phi$?
To what extent is this map holomorphic?
Note that it is holomorphic in the interior of its domain but there must be an appropriate notion
generalizing holomorphicity to the boundary points (an attempt to define this notion is made in
preprint \cite{Timorin_holreg}).

\section{Existence of topological regluing}
\label{s_moore}

{\footnotesize
In this section, we prove that a contracted set of disjoint $\alpha$-paths can be reglued (Theorem \ref{conj_tr}).
We also establish necessary conditions for maps obtained by regluings to be ramified coverings (Theorem \ref{reg_rc}).
Our methods are based on a theory of Moore \cite{Moore_foundations},
who gave a purely topological characterization of topological spheres.
}

\subsection{A variant of Moore's theory}
\label{ss_moore}
Moore \cite{Moore_foundations} defined a system of topological conditions that are necessary and sufficient
for a topological space to be homeomorphic to the sphere.
He used this system to lay axiomatic foundations of plane topology.
One of the most remarkable applications of Moore's theory is a description
of equivalence relations on the sphere such that the quotient space is homeomorphic
to the sphere.

In this section, we will give a variant of Moore's theory.
Our axiomatics is completely different.
It does not have the purpose of giving a foundation to plane topology, but
just serves as a fast working tool to prove that something is a topological sphere.
The main idea, however, remains the same.

Let $X$ be a compact connected Hausdorff second countable topological space.
Recall that a {\em simple closed curve} in $X$ is the image in $X$ of a continuous
embedding $\gamma:S^1\to X$.
Here the map $\gamma$ is called a {\em simple closed path}.
We also define a {\em simple path} as a continuous embedding of $[0,1]$ into $X$,
and a {\em simple curve} as the image of a simple path.
A {\em segment} of a simple curve is defined as the image of a subsegment in $[0,1]$
under the corresponding simple path.
Similarly, we can define a segment of a simple closed curve.

Suppose we fixed some set $\Ec$ of simple curves in $X$.
The curves in $\Ec$ are called {\em elementary curves}.
We will always assume that segments of elementary curves are also elementary curves,
and that if two elementary curves have only an endpoint in common, then
their union is also an elementary curve.
We will not state these assumptions as axioms, although, technically, they are.
Define an {\em elementary closed curve} as a simple closed curve, all of
whose segments are elementary curves.

We are ready to state the first axiom:

\begin{ax}[Jordan domain axiom]
  Any elementary closed curve divides $X$ into two connected components called
  {\em elementary} domains.
\end{ax}

Since a simple closed curve is homeomorphic to a circle, it makes sense to talk
about the (circular) order of points on it.
A {\em topological quadrilateral} in $X$ is defined as a domain bounded by a
simple closed curve $J\subset X$ with a distinguished quadruple of different points $a$, $b$, $c$ and $d$ on $J$
positioned on the curve in this order.
The domain is called the {\em interior} of the quadrilateral.
An {\em elementary quadrilateral} is a topological quadrilateral, whose bounding curve is elementary.
A simple curve connecting the segment $[a,b]$ with the segment $[c,d]$ of $J$ is called a {\em vertical curve},
provided that the interior of it lies in the interior of the quadrilateral.
A simple curve connecting the segments $[b,c]$ and $[d,a]$ is called {\em horizontal}, provided that
its interior is in the interior of the quadrilateral.
We will also regard $[a,b]$ and $[c,d]$ as horizontal segments,
$[b,c]$ and $[d,a]$ as vertical segments.

\begin{ax}[Extension axiom]
  For any elementary quadrilateral, and a point $x$ on the boundary of it but not in a vertex,
  there exists a vertical or a horizontal elementary curve with an endpoint at $x$.
\end{ax}

Define a {\em grid} in an elementary quadrilateral $Q$ as a system of
finitely many horizontal elementary curves and
finitely many vertical elementary curves such that all horizontal curves are pair-wise disjoint and all vertical curves are
pair-wise disjoint, and every horizontal curve intersects every vertical curve at exactly one point.
Using the Jordan domain axiom, it is easy to see that every grid with $n-1$ horizontal and
$m-1$ vertical curves divides $Q$ into $mn$ pieces.
We will refer to these pieces as {\em cells of the grid}.
Cells can be regarded as elementary quadrilaterals.

\begin{ax}[Covering axiom]
  Consider an elementary quadrilateral $Q$ and an open covering $\Uc$ of $\overline Q$.
  Then there exists a grid in $ Q$ such that the closure of every cell belongs to
  a single element of $\Uc$ (such grid is said to be {\em subordinate} to $\Uc$).
\end{ax}

The following is the main fact from Moore's theory that we need.
This particular formulation is new, but it is inspired by \cite{Moore_foundations}.
As above, $X$ is a compact connected Hausdorff second countable topological space.

\begin{theorem}
\label{moore}
  Suppose that a set of elementary curves $\Ec$ in $X$ satisfies the Jordan domain axiom, the Extension axiom and the Covering axiom.
  Also suppose that there exists an elementary closed curve in $X$.
  Then $X$ is homeomorphic to the sphere.
\end{theorem}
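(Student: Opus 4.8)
The plan is to produce the homeomorphism with $S^2$ by cutting $X$ along the given elementary closed curve and recognising each of the two pieces as a closed topological disc. Since $X$ is compact Hausdorff and second countable it is regular and second countable, hence metrisable by Urysohn's theorem; fix a metric $d$, which allows us to speak of diameters and of the \emph{mesh} of a grid, meaning the largest diameter of the closure of a cell. Let $J_0$ be the elementary closed curve provided by the hypothesis. By the Jordan domain axiom $X\setminus J_0$ has exactly two connected components $D_1$ and $D_2$; a short argument from the Extension and Covering axioms, applied to elementary quadrilaterals straddling a boundary point, shows that the frontier of each $D_i$ is all of $J_0$, so that $\overline{D_i}=D_i\cup J_0$ and $X=\overline{D_1}\cup\overline{D_2}$ with $\overline{D_1}\cap\overline{D_2}=J_0$. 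Marking four points $a,b,c,d$ on $J_0$ in cyclic order realises each $D_i$ as the interior of an elementary quadrilateral $Q_i$ with $\partial Q_i=J_0$.

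The core of the argument is the following statement, call it $(\star)$: for every elementary quadrilateral $Q$, the closure $\overline Q$ is homeomorphic to the unit square $[0,1]^2$ by a homeomorphism carrying the four sides of $Q$ to the four sides of the square. Granting $(\star)$, apply it to $Q_1$ and $Q_2$ to obtain homeomorphisms $\overline{D_i}\to[0,1]^2$. The two identifications they induce on $J_0=\partial D_1=\partial D_2$ differ by a self-homeomorphism of the boundary circle, and may in fact be made to agree by choosing the initial grid on the $Q_2$-side so that its boundary marks match those coming from $Q_1$. Hence $X$ is obtained by gluing two copies of $[0,1]^2$ along their boundary circles via a homeomorphism, which by the standard fact (Alexander's trick) produces a space homeomorphic to $S^2$.

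To prove $(\star)$ I would build a nested sequence of grids $G_1\subset G_2\subset\cdots$ in $Q$, meaning that every horizontal (respectively vertical) curve of $G_k$ is also a horizontal (respectively vertical) curve of $G_{k+1}$, with $\mathrm{mesh}(G_k)\to 0$. Using the Jordan domain axiom, the horizontal curves of each $G_k$ are linearly ordered, as are the vertical ones, so the cells of $G_k$ carry integer coordinates $(i,j)$, and refinement makes the combinatorial rectangle of $G_{k+1}$ a subdivision of that of $G_k$. Assign to the cells of $G_k$ the subrectangles of $[0,1]^2$ cut out by partitions $0=x_0^k<\dots<x_{N_k}^k=1$ in each coordinate, chosen compatibly with refinement by subdividing a rectangle into equal parts each time new curves are inserted across the corresponding gap; since $\mathrm{mesh}(G_k)\to 0$, no gap can avoid being subdivided infinitely often, so these subrectangles shrink to points. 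For $x\in\overline Q$, the at most four closures of cells of $G_k$ containing $x$ map to a nested, shrinking family of subrectangles whose intersection is a single point $\Psi(x)$. One checks that $\Psi$ is continuous (the union of the cells meeting $x$ is eventually inside any given neighbourhood of $x$, as the mesh tends to $0$), injective (distinct points eventually lie in no common, and in no pair of adjacent, cell closures), and surjective (its image is compact, hence closed, and meets every subrectangle); as $\overline Q$ is compact and $[0,1]^2$ is Hausdorff, $\Psi$ is a homeomorphism carrying sides to sides.

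The main obstacle is exactly the construction of the nested grids with vanishing mesh, and this is where all three axioms act together. Given $G_k$, its horizontal curves divide $Q$ into horizontal strips, each of which is an elementary quadrilateral whose left and right sides are subarcs of $\partial Q$; applying the Covering axiom inside each strip to an open cover of mesh $<\eps_{k+1}$ yields new curves that run the full width of $Q$ and refine $G_k$ in the vertical direction. The transverse curves produced in a strip span only that strip, so one invokes the Extension axiom — at a point of a horizontal side of a quadrilateral the only admissible elementary extension is a transverse, i.e. vertical, curve — to prolong these pieces through the pre-existing horizontal curves into curves spanning the full height of $Q$, and then the Jordan domain axiom to check that the resulting full-width and full-height curves meet pairwise in exactly one point, so that together they form a genuine grid refining $G_k$. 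Alternating the roles of the two directions while letting $\eps_k\to 0$, and keeping careful track of the marked endpoints on the curves being extended, produces the required sequence; compactness of $\overline Q$ then forces $\mathrm{mesh}(G_k)\to 0$. Making the refinements in the two directions coherent across all the old cell boundaries is the technically delicate heart of the proof.
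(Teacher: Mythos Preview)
Your proposal is correct and follows essentially the same route as the paper: reduce to showing that the closure of an elementary quadrilateral is homeomorphic to $[0,1]^2$ by building a nested sequence of refining grids, then read off the homeomorphism from the combinatorics of the grids, and finally glue the two elementary domains along their common boundary to obtain $S^2$. The only notable differences are cosmetic: the paper controls the grid refinement by enumerating all finite open covers drawn from a fixed countable base (rather than invoking Urysohn and using diameters), and it applies the Covering axiom cell-by-cell in $G_n$ rather than strip-by-strip; the ``technically delicate heart'' you flag---making the extended curves cohere into a genuine grid---is handled in the paper with the same one-line appeal to the Extension axiom that you make.
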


\begin{proof}
Since there exists an elementary closed curve, by the Jordan domain axiom, there exists
an elementary quadrilateral $ Q$.
It suffices to prove that the closure of this quadrilateral is homeomorphic to the closed disk.

Fix a countable basis $\Bc$ of the topology in $X$.
There are countably many finite open coverings of $ Q$ contained in $\Bc$.
Number all such coverings by natural numbers.
We will define a sequence of grids $G_n$ in $ Q$ by induction on $n$.
For $n=1$, we just take the trivial grid, the one that does not have any horizontal or vertical curves.
Suppose now that $G_n$ is defined.
Let $\Uc_n$ be the $n$-th covering of $ Q$.
Using the Covering axiom, we can find a grid in each cell of $G_n$ that is subordinate to $\Uc_n$.
Using the Extension axiom, we can extend these grids to a single grid $G_{n+1}$ in $ Q$.
Thus $G_{n+1}$ contains $G_n$ and is subordinate to $\Uc_n$.

Consider any pair of different points $x,y\in Q$.
There exists $n$ such that $x$ and $y$ do not belong to the closure of the same cell in $G_{n+1}$.
Indeed, let us first define a covering of $ Q$ as follows.
For any $z\in\overline Q$, choose $U_z$ to be an element of $\Bc$ that contains $z$ but does
not include the set $\{x,y\}$.
The sets $U_z$ form an open covering of $\overline Q$.
Since $\overline Q$ is compact, there is a finite subcovering.
This finite subcovering coincides with $\Uc_n$ for some $n$.
Then, by our construction, the closure of every cell in $G_{n+1}$ is contained in a single
element of $\Uc_n$.
However, the set $\{x,y\}$ is not contained in a single element of $\Uc_n$.
Therefore, $\{x,y\}$ cannot belong to the closure of a single cell.

Consider a nested sequence $C_1\subset C_2\supset\dots\supset C_n\supset\dots$, where
$C_n$ is a cell of $G_n$.
We claim that the intersection of the closures $\overline C_n$ is a single point.
Indeed, this intersection is nonempty, since $\overline C_n$ form a nested sequence of
nonempty compact sets.
On the other hand, as we saw, there is no pair $\{x,y\}$ of different points
contained in all $\overline C_n$.

Consider the standard square $[0,1]\times [0,1]$ and a sequence $H_n$ of
grids in it with the following properties:
\begin{enumerate}
  \item all horizontal curves in $H_n$ are horizontal straight segments, all
  vertical curves in $H_n$ are vertical straight segments;
  \item the grid $H_n$ has the same number of horizontal curves and the same
  number of vertical curves as $G_n$, thus there is a natural one-to-one
  correspondence between cells of $H_n$ and cells of $G_n$ respecting ``combinatorics'',
  i.e. the cells in $H_n$ corresponding to adjacent cells in $G_n$ are also adjacent;
  \item the grid $H_{n+1}$ contains $H_n$; moreover, if a cell of $H_{n+1}$ is in
  a cell of $H_n$, then there is a similar inclusion between the corresponding cells of
  $G_{n+1}$ and $G_n$;
  \item between any horizontal segment of $H_n$ and the next horizontal segment, the horizontal segments of $H_{n+1}$
  are equally spaced; similarly, between any vertical segment of $H_n$ and the next vertical segment, the vertical
  segments of $H_{n+1}$ are equally spaced.
\end{enumerate}
It is not hard to see that any nested sequence of cells $D_n$ of $H_n$ converges to a point:
$\bigcap\overline D_n=\{pt\}$.

We can now define a map $\Phi:\overline Q\to [0,1]\times [0,1]$ as follows.
For a point $x\in\overline Q$, there is a nested sequence of cells $C_n$
of $G_n$ such that $\overline C_n$ contains $x$ for all $n$.
Define the point $\Phi(x)$ as the intersection of the closures of the
corresponding cells $D_n$ in $H_n$.
(Note that they also form a nested sequence according to our assumptions).
Clearly, the point $\Phi(x)$ does not depend on a particular choice of the
nested sequence $C_n$ (there can be at most four different choices).
It is also easy to see that $\Phi$ is a homeomorphism between $\overline Q$ and
the standard square $[0,1]\times [0,1]$.
\end{proof}

One of the main applications of Moore's theory is the following characterization
of equivalence relations on $S^2$ with quotients homeomorphic to the sphere:

\begin{theorem}
  \label{equiv}
  Let $\sim$ be an equivalence relation on $S^2$ such that all equivalence classes
  are compact and connected, they do not separate the sphere, and form an upper semicontinuous family.
  Then the quotient $S^2/\sim$ is homeomorphic to the sphere provided that not all
  points are equivalent.
\end{theorem}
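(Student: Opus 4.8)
The plan is to deduce Theorem~\ref{equiv} from Theorem~\ref{moore}. Write $Y=S^2/\!\sim$ with the quotient topology and let $\pi\colon S^2\to Y$ be the projection; note that $\pi$ is \emph{monotone}, since its fibers are exactly the equivalence classes, which are connected. First I would check that $Y$ satisfies the standing hypotheses of Theorem~\ref{moore}. It is a continuous image of $S^2$, hence compact and connected; since $\sim$ is upper semicontinuous with compact classes, the quotient of the compact metric space $S^2$ is again compact and metrizable (a classical fact about upper semicontinuous decompositions), hence Hausdorff and second countable; and $Y$ has at least two points because not all points are equivalent. It then remains to exhibit a family $\Ec$ of elementary curves in $Y$ satisfying the Jordan domain, Extension and Covering axioms, together with at least one elementary closed curve.

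I would take $\Ec$ to consist of all $\pi$-images of saturated simple curves in $S^2$ (equivalently, the simple curves $c\subset Y$ with $\pi^{-1}(c)$ a simple curve in $S^2$), and similarly for elementary closed curves. Because $\pi$ is monotone, the equivalence classes contained in such a saturated arc are subcontinua of an arc, so its $\pi$-image is again an arc; one then checks by the same kind of monotonicity argument that segments of members of $\Ec$ lie in $\Ec$ and that the union of two members meeting in a single common endpoint lies in $\Ec$, so $\Ec$ is an admissible family of elementary curves. The Jordan domain axiom is now easy: if $J\in\Ec$ is an elementary closed curve, $\widehat J=\pi^{-1}(J)$ is a Jordan curve in $S^2$, and its two complementary Jordan domains $D_1,D_2$ are saturated (a class disjoint from $\widehat J$ is connected, hence lies in one $D_i$), so $\pi(D_1)$ and $\pi(D_2)$ are disjoint, open, connected, and have union $Y\setminus J$. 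Producing one elementary closed curve amounts to finding a saturated Jordan curve in $S^2$ that is not a single class: choosing a class $E_0\neq S^2$, the open saturated set $S^2\setminus E_0$ contains a class $E_q$, and an upper-semicontinuity argument yields a saturated Jordan curve surrounding $E_q$ inside $S^2\setminus E_0$.

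The substance of the argument is the Extension and Covering axioms, and this is where the hypothesis that the classes are non-separating enters. Given an elementary quadrilateral $Q\subset Y$, lift it to the Jordan quadrilateral $\widehat Q=\pi^{-1}(Q)$ in $S^2$. For the Extension axiom, a boundary point $x$ of $Q$ other than a vertex pulls back to a subarc of a single side of $\widehat Q$; I would draw inside $\widehat Q$ a crosscut from $\pi^{-1}(x)$ to the opposite side, and then \emph{reroute} it so that it meets every equivalence class in a connected set: whenever the crosscut re-enters a class $E$, the fact that $S^2\setminus E$ is connected, together with a small saturated neighbourhood of $E$ furnished by upper semicontinuity, allows the offending subarc to be replaced by one meeting $E$ in at most a point, done compatibly over all classes by a standard exhaustion/limiting argument; the $\pi$-image of the rerouted crosscut is then the required vertical or horizontal elementary curve. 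For the Covering axiom, given an open cover $\Uc$ of $\overline Q$, pull it back to a cover of $\overline{\widehat Q}$, build a fine grid of crosscuts in $\widehat Q$ subordinate to it by the classical plane-topology construction, reroute each grid curve as above, and push the grid down to $Q$; shrinking the mesh makes the closure of each cell land in a single element of $\Uc$.

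The main obstacle is precisely this rerouting step — arranging the crosscuts (and then whole grids) in $S^2$ so that their $\pi$-images remain embedded arcs while still joining the prescribed points and having arbitrarily small mesh. This is the technical heart of Moore's theory; it uses all three hypotheses on the classes (compact, connected, non-separating) and uses upper semicontinuity to localize the modifications near each class. Once the three axioms are verified for $\Ec$, Theorem~\ref{moore} shows that $Y$ is homeomorphic to the sphere, which is the assertion of Theorem~\ref{equiv}.
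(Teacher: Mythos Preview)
The paper does not actually prove Theorem~\ref{equiv}. It is stated as a classical result of Moore (with a reference to \cite{Moore}) and is presented only as ``one of the main applications of Moore's theory''; no argument is given in the paper beyond recalling the definition of upper semicontinuity. So there is no paper proof to compare against.

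Your strategy --- deducing Theorem~\ref{equiv} from the paper's Theorem~\ref{moore} by choosing a suitable family $\Ec$ of elementary curves in the quotient --- is exactly the kind of deduction the paper's wording invites, and your preliminary checks (compactness, metrizability, the Jordan domain axiom via saturated Jordan curves) are fine. However, the proposal is really an outline rather than a proof: the entire content sits in the ``rerouting'' step, which you flag as the main obstacle but do not carry out. Producing, for an arbitrary boundary point of an elementary quadrilateral, a crosscut in $\widehat Q$ that meets every equivalence class in a connected set (and doing this coherently for a whole grid with controlled mesh) is precisely the nontrivial part of Moore's 1925 argument; appealing to ``a standard exhaustion/limiting argument'' here is not enough, since one must simultaneously control the endpoints, keep the curve embedded, and handle the fact that modifying the curve near one class may create new bad intersections with nearby classes. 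There is also a small definitional wrinkle: requiring $\pi^{-1}(c)$ to be a simple curve forces every class meeting $c$ to be a point or a subarc, which is quite restrictive and makes the existence of even one elementary closed curve a genuine issue --- your sketch for producing one (``an upper-semicontinuity argument yields a saturated Jordan curve'') hides the same rerouting difficulty. In short, the architecture is right, but the load-bearing lemma is asserted rather than proved.
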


Recall that a family of compact sets $\Sc$ is {\em upper semicontinuous} if for any $A\in\Sc$
and any neighborhood $U$ of $A$, there is another neighborhood $V$ of $A$ such that
$B\subset U$ whenever $B\in\Sc$ and $B\cap V\ne\emptyset$.

\subsection{Regluing space}
\label{s_regs}
Let $\Ac$ be a contracted set of disjoint $\alpha$-paths.
In this section, we prove that there is a regluing of $\Ac$ into some contracted set $\Bc$ of disjoint $\beta$-paths
(Theorem \ref{conj_tr}).
To this end, we first define an abstract topological space $X$, a set $\Bc$
of disjoint $\beta$-paths in $X$, and a homeomorphism $\Phi:S^2-\Im\Ac\to X-\Im\Bc$.
Then we show that $X$ is homeomorphic to the sphere and that $\Phi$ is a regluing.

Let $\Bc$ be an abstract set, whose elements are in one-to-one correspondence with the paths in $\Ac$.
We will write $\beta_\alpha$ for the path in $\Bc$ corresponding to a path $\alpha\in\Ac$.
As a set of points, $X$ is defined to be the disjoint union of $X_0=S^2-\Im\Ac$
and the set of points of the form $\beta(t_1,t_2)$, where $\beta\in\Bc$ and $(t_1,t_2)\in S^1$.
Here $\beta(t_1,t_2)$ is thought of as a formal expression defined by $\beta$ and $(t_1,t_2)$.
We identify $\beta(t_1,t_2)$ with $\beta(-t_1,t_2)$ but make no further identifications.

Let $\Ac'$ be a finite subset of $\Ac$.
Clearly, there exists a regluing $\Phi_{\Ac'}$ of $\Ac'$ into some finite set $\Bc'$ of
disjoint simple paths.
Define the following equivalence relation $\sim_{\Ac'}$ on $S^2$: two different points
are equivalent if and only if they belong to the same component of $\Phi_{\Ac'}(\Im(\Ac-\Ac'))$.
(Note that $\Phi_{\Ac'}$ is defined on $\Im(\Ac-\Ac')$ because $\Im(\Ac-\Ac')$ is disjoint
from $\Im\Ac'$).
By Theorem \ref{equiv}, the quotient $Y_{\Ac'}=S^2/\sim_{\Ac'}$ is homeomorphic to the sphere.
Note that we need to use our condition on $\Ac$ to guarantee that $\sim_{\Ac'}$ satisfies
all assumptions of Theorem \ref{equiv} including the upper semicontinuity.
Let $\pi_{\Ac'}:S^2\to Y_{\Ac'}$ denote the canonical projection.
Note that $Z_{\Ac'}=\pi_{\Ac'}(\Phi_{\Ac'}(\Im(\Ac-\Ac')))$ is a countable subset of $Y_{\Ac'}$.
A standard Baire category argument shows that paths in $Y_{\Ac'}$ avoiding $Z_{\Ac'}$ are
dense in the space of paths with the uniform metric.

We can now define a map $\phi_{\Ac'}:X\to Y_{\Ac'}$ as follows.
If $x\in X_0$, then we set $\phi_{\Ac'}(x)=\pi_{\Ac'}\circ\Phi_{\Ac'}(x)$.
If $\alpha\in\Ac'$, then we define $\phi_{\Ac'}(\beta_\alpha(t_1,t_2))$ to be $\pi_{\Ac'}(\beta'(t_1,t_2))$,
where $\beta'$ is the $\beta$-path in $\Bc'$ corresponding to the $\alpha$-path $\alpha$
(it is important not to confuse $\beta_\alpha$, which is a formal object, with $\beta'$, which
is a $\beta$-path in $\Bc'$).
Finally, if $\alpha\in\Ac-\Ac'$, then we define $\phi_{\Ac'}(\beta_\alpha(t_1,t_2))$ to be
$\pi_{\Ac'}\circ\Phi_{\Ac'}(\alpha(S^1))$, which is a single point.
Choose a finite subset $\Ac'$ of $\Ac$ and a simple path $\gamma:[0,1]\to Y_{\Ac'}$
avoiding the set $Z_{\Ac'}$.
Then $\phi_{\Ac'}^{-1}\circ\gamma$ is a well-defined map from $[0,1]$ to $X$.
We call this map an {\em elementary path} in $X$.
Consider now a simple closed path $\gamma:S^1\to Y_{\Ac'}$ such that $\gamma(S^1)$
avoids the set $Z_{\Ac'}$.
By the Jordan theorem, the image of $\gamma$ divides $Y_{\Ac'}$ into two domains $U$ and $U'$.
The preimages of $U$ and $U'$ under $\phi_{\Ac'}$ are called {\em elementary domains in $X$}.

\begin{proposition}
  Elementary domains in $X$ form a basis of some Hausdorff topology.
\end{proposition}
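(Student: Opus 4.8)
The plan is to verify the two axioms of a basis for a topology — that the elementary domains cover $X$, and that any point lying in the intersection of two of them lies in a third one contained in that intersection — and then the Hausdorff condition separately. Everything will rest on two facts: each $Y_{\Ac'}$ is a topological $2$-sphere with $Z_{\Ac'}$ countable, and in any $2$-sphere one can inscribe, around a given point and inside a given open set, a Jordan domain whose bounding circle avoids a prescribed countable set (in a chart, all but countably many concentric circles do), so that the $\phi_{\Ac'}$-preimage of such a small Jordan domain is an elementary domain. Covering is then immediate: for $x\in X_0$ use $\Ac'=\emptyset$, where $\phi_\emptyset(x)=\pi_\emptyset(x)$ lies off $Z_\emptyset$; for $x=\beta_\alpha(t_1,t_2)$ use any finite $\Ac'\ni\alpha$, where $\phi_{\Ac'}(x)=\pi_{\Ac'}(\beta'(t_1,t_2))$ with $\beta'\in\Bc'$ the path corresponding to $\alpha$, again lying off $Z_{\Ac'}$ because $\beta'(S^1)\subset\Im\Bc'$ is disjoint from $\Phi_{\Ac'}(\Im(\Ac-\Ac'))$; in both cases a small inscribed Jordan domain pulls back to an elementary domain through $x$. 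For Hausdorff, given $x\ne y$ in $X$ pick a finite $\Ac'$ containing the one or two paths of $\Ac$ that index them and check, by a short case analysis (both in $X_0$; one in $X_0$, one formal; both formal on the same path; both formal on different paths) that uses the injectivity of $\Phi_{\Ac'}$ and the disjointness of the $\beta$-paths in $\Bc'$, that $\phi_{\Ac'}(x)\ne\phi_{\Ac'}(y)$; then separate these points in the metrizable sphere $Y_{\Ac'}$ by disjoint inscribed Jordan domains with boundaries missing $Z_{\Ac'}$ and pull back.

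The substantive step is the intersection property. Write $D_i=\phi_{\Ac_i'}^{-1}(U_i)$, $i=1,2$, with $U_i$ one of the two domains cut out in $Y_{\Ac_i'}$ by a simple closed curve missing $Z_{\Ac_i'}$, and let $x\in D_1\cap D_2$. The key auxiliary statement I would establish is that for finite $\Ac'\subseteq\Ac''$ there is a continuous surjection $r_{\Ac'',\Ac'}\colon Y_{\Ac''}\to Y_{\Ac'}$ with $r_{\Ac'',\Ac'}\circ\phi_{\Ac''}=\phi_{\Ac'}$; morally $r_{\Ac'',\Ac'}$ collapses each reglued arc of $Y_{\Ac''}$ coming from a path in $\Ac''-\Ac'$ — in $Y_{\Ac'}$ that arc has already become a point — and is a homeomorphism off these arcs. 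Granting this, set $\Ac'=\Ac_1'\cup\Ac_2'$, $r_i=r_{\Ac',\Ac_i'}$, and $W=r_1^{-1}(U_1)\cap r_2^{-1}(U_2)$, which is open in $Y_{\Ac'}$ and contains $\phi_{\Ac'}(x)$ since $r_i(\phi_{\Ac'}(x))=\phi_{\Ac_i'}(x)\in U_i$. Inscribe a Jordan domain $V$ with $\phi_{\Ac'}(x)\in V\subseteq W$ and $\partial V\cap Z_{\Ac'}=\emptyset$; then $D_3=\phi_{\Ac'}^{-1}(V)$ is an elementary domain, contains $x$, and $D_3\subseteq\phi_{\Ac'}^{-1}(W)=\phi_{\Ac_1'}^{-1}(U_1)\cap\phi_{\Ac_2'}^{-1}(U_2)=D_1\cap D_2$, as required.

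The main obstacle is the construction of the refinement maps $r_{\Ac'',\Ac'}$ and the verification of the intertwining identity. I would obtain $r_{\Ac'',\Ac'}$ as the continuous extension to $Y_{\Ac''}$ of the partially defined comparison map $\phi_{\Ac'}\circ(\phi_{\Ac''})^{-1}$, which is well defined and injective on the $\phi_{\Ac''}$-image of $X_0$ together with the formal points of the paths in $\Ac'$; this set is dense in $Y_{\Ac''}$ by a Baire argument, its complement being countably many arcs and points. The existence of the extension — a single accumulation point for every convergent sequence — is a compactness argument of the kind used for Proposition \ref{reg_homeo}: if a sequence in $X_0$ has $\phi_{\Ac''}$-images tending to a reglued arc coming from $\alpha\in\Ac''-\Ac'$, then, applying $\Phi_{\Ac''}^{-1}$, the underlying points of $S^2$ tend to $\alpha(S^1)$, whence their images under $\pi_{\Ac'}\circ\Phi_{\Ac'}$ tend to the single point onto which $\Phi_{\Ac'}(\alpha(S^1))$ is collapsed in $Y_{\Ac'}$; the remaining cases (tending to an arc coming from $\Ac'$, or to an already-collapsed point) go the same way, and a direct inspection of the definition of $\phi$ then gives $r_{\Ac'',\Ac'}\circ\phi_{\Ac''}=\phi_{\Ac'}$. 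As an alternative one could identify $Y_{\Ac'}$ with the quotient of $Y_{\Ac''}$ collapsing those arcs, using Moore's Theorem \ref{equiv} to recognize the quotient as a sphere and matching the two spaces with the $\phi$'s on a dense set.
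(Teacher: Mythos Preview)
Your proposal is correct and follows essentially the same route as the paper: pass to the common refinement $\Ac_1'\cup\Ac_2'$, use transition maps $Y_{\Ac''}\to Y_{\Ac'}$ intertwining the $\phi$'s to pull both Jordan domains into one sphere, and inscribe a smaller Jordan domain avoiding the countable set; for Hausdorff, separate in a suitable $Y_{\Ac'}$ and pull back. The paper simply asserts the existence of the transition maps (denoted $\psi_{\Ac_i,\Ac_3}$ there) without construction, whereas you correctly identify this as the point requiring work and supply two viable arguments; your choice of $\Ac'$ in the Hausdorff step is slightly larger than the paper's (which takes only the paths containing \emph{both} points, hence at most one), but either choice works.
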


\begin{proof}
  Let $\Ac_1$ and $\Ac_2$ be two finite subsets of $\Ac$.
  Consider the corresponding spaces $Y_{\Ac_1}$ and $Y_{\Ac_2}$.
  Fix Jordan domains $U_1$ in $Y_{\Ac_1}$ and $U_2$ in $Y_{\Ac_2}$, whose
  boundaries avoid $Z_{\Ac_1}$ and $Z_{\Ac_2}$, respectively.
  Set $V_1=\phi_{\Ac_1}^{-1}(U_1)$ and $V_2=\phi_{\Ac_2}^{-1}(U_2)$.
  We need to prove that, for any $x\in V_1\cap V_2$, there exists an
  elementary domain $V_3\ni x$ such that $V_3\subseteq V_1\cap V_2$.
  Set $\Ac_3=\Ac_1\cup\Ac_2$.
  There are continuous maps $\psi_{\Ac_1,\Ac_3}:Y_{\Ac_3}\to Y_{\Ac_1}$
  and $\psi_{\Ac_2,\Ac_3}:Y_{\Ac_3}\to Y_{\Ac_2}$ such that
  $$
  \phi_{\Ac_1}=\psi_{\Ac_1,\Ac_3}\circ\phi_{\Ac_3},\quad
  \phi_{\Ac_2}=\psi_{\Ac_2,\Ac_3}\circ\phi_{\Ac_3}.
  $$
  The sets $\tilde U_1=\psi_{\Ac_1,\Ac_3}^{-1}(U_1)$ and
  $\tilde U_2=\psi_{\Ac_2,\Ac_3}^{-1}(U_2)$ are open in $Y_{\Ac_3}$.
  Moreover, we have $\phi_{\Ac_3}(x)\in\tilde U_1\cap\tilde U_2$.
  There is a Jordan domain $U_3$ in $\tilde U_1\cap\tilde U_2$ containing
  $\phi_{\Ac_3}(x)$, whose boundary is a simple closed curve avoiding $Z_{\Ac_3}$.
  We set $V_3=\phi_{\Ac_3}^{-1}(U_3)$.

  To prove that the topology defined by elementary domains is Hausdorff, we
  consider any pair of different points $x,y\in X$ and the set $\Ac'$ of all
  $\alpha\in\Ac$ such that $x,y\in\beta_\alpha(S^1)$ (note that the set $\Ac'$
  is either empty or contains only one path).
  Clearly, the points $\phi_{\Ac'}(x)$ and $\phi_{\Ac'}(y)$ of $Y_{\Ac'}$
  belong to disjoint Jordan domains, whose boundaries are also disjoint and
  avoid the set $Z_{\Ac'}$.
  The pullbacks of these domains under $\phi_{\Ac'}$ separate the points $x$ and $y$.
\end{proof}

\begin{proposition}
\label{cont}
  For any finite subset $\Ac'\subset\Ac$, the map $\phi_{\Ac'}:X\to Y_{\Ac'}$ is continuous.
\end{proposition}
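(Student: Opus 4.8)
The plan is to deduce continuity of $\phi_{\Ac'}$ directly from the way the topology on $X$ was defined, using that elementary domains form a basis (the previous proposition). The key observation is that the preimages under $\phi_{\Ac'}$ of a suitable basis of $Y_{\Ac'}$ are exactly elementary domains of $X$, so they are open essentially by definition; the only real content is that there are enough such basis sets.

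First I would check that the Jordan domains $U\subset Y_{\Ac'}$ whose bounding simple closed curve is disjoint from the countable set $Z_{\Ac'}$ form a basis for the topology of $Y_{\Ac'}$. Since $Y_{\Ac'}$ is homeomorphic to $S^2$, given a point $p$ and an open neighborhood $W\ni p$ I would fix a chart around $p$ with image inside $W$ and consider the round disks of small radius centered at the image of $p$ in this chart; each point of $Z_{\Ac'}$ lies on the bounding circle of at most one such disk, so all but countably many radii give a bounding circle disjoint from $Z_{\Ac'}$, and the corresponding disk pulls back to a Jordan domain $U$ with $p\in U\subseteq W$ and $\d U\cap Z_{\Ac'}=\emptyset$. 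This is the same Baire/counting argument already used above in the construction of $X$, and it is the only step that requires a (mild) verification; I expect it to be the main obstacle only in the sense of being the sole non-formal point.

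Then I would observe that for any such $U$, parameterizing $\d U$ by a simple closed path $\gamma:S^1\to Y_{\Ac'}$ avoiding $Z_{\Ac'}$ and taking the ``$\Ac''$'' in the definition to be $\Ac'$ itself, the set $\phi_{\Ac'}^{-1}(U)$ is by definition an elementary domain in $X$, hence open. Finally, given an open set $W\subset Y_{\Ac'}$ and a point $x\in\phi_{\Ac'}^{-1}(W)$, I would pick a basic Jordan domain $U$ with $\phi_{\Ac'}(x)\in U\subseteq W$; then $x\in\phi_{\Ac'}^{-1}(U)\subseteq\phi_{\Ac'}^{-1}(W)$ and $\phi_{\Ac'}^{-1}(U)$ is an open elementary domain, so $\phi_{\Ac'}^{-1}(W)$ is a union of such domains and therefore open. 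This shows that $\phi_{\Ac'}$ is continuous.
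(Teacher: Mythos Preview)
Your proposal is correct and follows essentially the same approach as the paper: both arguments reduce continuity to the fact that Jordan domains in $Y_{\Ac'}$ with boundary avoiding $Z_{\Ac'}$ form a basis, and that the $\phi_{\Ac'}$-preimage of such a domain is by definition an elementary domain of $X$. The paper phrases this pointwise (given $x$ and a neighborhood of $\phi_{\Ac'}(x)$, choose a Jordan subdomain with boundary avoiding $Z_{\Ac'}$ and pull it back), while you phrase it globally via preimages of a basis; you also spell out the counting argument for the existence of such Jordan domains, which the paper leaves implicit.
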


\begin{proof}
  Given a point $x\in X$ and an open neighborhood $U$ of $\phi_{\Ac'}(x)$, we need to find an open
  neighborhood $V$ of $x$ such that $\phi_{\Ac'}(V)\subseteq U$.
  There is a Jordan curve around $\phi_{\Ac'}(x)$ in $Y_{\Ac'}$ that lies in $U$ and avoids the set $Z_{\Ac'}$.
  Let $U'$ be the Jordan domain bounded by this curve and contained in $U$.
  We can set $V=\phi_{\Ac'}^{-1}(U')$.
\end{proof}

It is clear that $X$ is second countable.

\begin{proposition}
\label{compact}
  The space $X$ is compact.
\end{proposition}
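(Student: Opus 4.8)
The plan is to deduce compactness from sequential compactness. Since $X$ is second countable it is Lindel\"of, and a Lindel\"of sequentially compact space is compact (sequential compactness gives countable compactness, and a countably compact Lindel\"of space is compact); so it is enough to show that every sequence in $X$ has a convergent subsequence. Fix an enumeration $\Ac=\{\alpha_1,\alpha_2,\dots\}$, put $\Ac_k=\{\alpha_1,\dots,\alpha_k\}$, and consider the continuous maps $\phi_k:=\phi_{\Ac_k}:X\to Y_{\Ac_k}$ (Proposition \ref{cont}), where each $Y_{\Ac_k}$ is a topological sphere by Theorem \ref{equiv}, hence compact and metrizable. I would also use the continuous transition maps $\psi_{j,k}:=\psi_{\Ac_j,\Ac_k}:Y_{\Ac_k}\to Y_{\Ac_j}$ for $j\le k$, characterized by $\psi_{j,k}\circ\phi_k=\phi_j$, together with the countable ``collapsed'' sets $Z_k:=Z_{\Ac_k}\subset Y_{\Ac_k}$.

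First I would record three routine structural facts. (i) $\phi_k$ is surjective and is injective on $X\setminus\phi_k^{-1}(Z_k)=X_0\cup\bigcup_{m\le k}\beta_{\alpha_m}(S^1)$, with $\phi_k^{-1}(Z_k)=\bigcup_{m>k}\beta_{\alpha_m}(S^1)$. (ii) For $m>k$ the whole arc $\beta_{\alpha_m}(S^1)$ is sent by $\phi_k$ to a single point of $Z_k$ (the \emph{collapsed point of $\alpha_m$ in $Y_{\Ac_k}$}), and distinct $m$ give distinct such points, because the sets $\Phi_{\Ac_k}(\alpha_m(S^1))$ are pairwise disjoint compact connected sets, hence lie in distinct connected components of the collapsed locus. (iii) For $j\le k$ and $m>k$, the map $\psi_{j,k}$ carries the collapsed point of $\alpha_m$ in $Y_{\Ac_k}$ to the collapsed point of $\alpha_m$ in $Y_{\Ac_j}$ (evaluate $\psi_{j,k}\circ\phi_k=\phi_j$ on $\beta_{\alpha_m}(S^1)$); moreover, since $\phi_k$ and $\phi_j$ are injective off their collapsed preimages, $\psi_{j,k}^{-1}(y)$ is a single point whenever $y\notin Z_j$.

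Now, given a sequence $(x_n)$ in $X$, a diagonal extraction over $k$ (using compactness of the spheres $Y_{\Ac_k}$) yields a subsequence along which $\phi_k(x_n)$ converges to a point $y^{(k)}\in Y_{\Ac_k}$ for every $k$; by continuity of $\psi_{j,k}$ these satisfy $\psi_{j,k}(y^{(k)})=y^{(j)}$ for $j\le k$. The crucial point is that $y^{(k)}\notin Z_k$ for some $k$: otherwise, for every $k$ the point $y^{(k)}$ would be the collapsed point of some $\alpha_{m_k}$ with $m_k>k$, and applying $\psi_{k-1,k}$ together with (iii) and (ii) would force $m_k=m_{k-1}$ for all $k$, so all the $m_k$ would equal one fixed integer exceeding every $k$ --- a contradiction. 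Fixing $k_0$ with $y^{(k_0)}\notin Z_{k_0}$, I would take $x\in X$ to be the unique point with $\phi_{k_0}(x)=y^{(k_0)}$ (unique by (i)). For $j\le k_0$ the compatibility relation gives $\phi_j(x)=\psi_{j,k_0}(y^{(k_0)})=y^{(j)}$, and for $j\ge k_0$ both $\phi_j(x)$ and $y^{(j)}$ lie in the singleton $\psi_{k_0,j}^{-1}(y^{(k_0)})$, so again $\phi_j(x)=y^{(j)}$; hence $\phi_k(x)=y^{(k)}$ for all $k$. To conclude $x_n\to x$: every basic neighborhood of $x$ is an elementary domain $\phi_{\Ac'}^{-1}(U')$ with $\Ac'\subset\Ac$ finite; choosing $k$ with $\Ac'\subseteq\Ac_k$ and using $\phi_{\Ac'}=\psi_{\Ac',\Ac_k}\circ\phi_k$, this neighborhood equals $\phi_k^{-1}(U)$, where $U=\psi_{\Ac',\Ac_k}^{-1}(U')$ is open and contains $\phi_k(x)=y^{(k)}$, so that $\phi_k(x_n)\to y^{(k)}$ puts $x_n\in\phi_k^{-1}(U)$ for all large $n$.

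I expect the main obstacle to be the bookkeeping behind the three structural facts --- in particular the non-accumulation statement that distinct arcs land in distinct connected components of the collapsed locus (so that the collapsed points are genuinely distinct), and that $\psi_{j,k}$ is one-to-one off $Z_j$. Once these are in hand, the essential, non-formal idea is simply that a $\psi$-compatible family $(y^{(k)})$ cannot live entirely inside the collapsed sets, since that would produce an index stabilizing to a value larger than every $k$.
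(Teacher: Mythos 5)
Your proof is correct and follows essentially the same strategy as the paper: sequential compactness via limits in the finite-collapse quotient spheres $Y_{\Ac'}$, together with the key observation that a compatible family of limit points cannot lie entirely in the collapsed sets $Z_{\Ac'}$. The paper executes this more economically --- it extracts convergence in $Y_\emptyset$ and then in $Y_{\Ac'}$ for the single path (if any) whose arc collapses to the first limit, rather than performing a diagonal extraction over all the $\Ac_k$ --- but your explicit index argument showing that some $y^{(k)}\notin Z_{\Ac_k}$ makes visible a step the paper leaves implicit.
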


\begin{proof}
  We will prove that any sequence in $X$ has a convergent subsequence.
  Consider a sequence $x_n\in X$.
  The sequence $\phi_{\emptyset}(x_n)\in Y_\emptyset$ has a convergent subsequence.
  We may assume without loss of generality that $\phi_\emptyset(x_n)\to y$.
  Let $\Ac'$ be the set of all $\alpha\in\Ac$ such that $\phi_\emptyset^{-1}(y)\subseteq\beta_\alpha(S^1)$.
  Note that $\Ac'$ contains at most one element.
  We may assume that $y'_n=\phi_{\Ac'}(x_n)$ also converges to some point $y'\in Y_{\Ac'}$.
  Then $x_n$ converges to $x=\phi_{\Ac'}^{-1}(y')$.
  Indeed, let $V\ni x$ be any open neighborhood.
  We may assume without loss of generality that $V=\phi_{\Ac''}^{-1}(U)$, where
  $\Ac''\supseteq\Ac'$, and $U$ is bounded by a Jordan curve avoiding $Z_{\Ac''}$.
  It is easy to see that the sequence $y''_n=\phi_{\Ac''}(x_n)$ converges to
  $y''=\psi_{\Ac',\Ac''}^{-1}(y')$.
  Therefore, all $y''_n$ but finitely many belong to $U$.
  It follows that all $x_n$ but finitely many belong to $V$.
\end{proof}

We will need the following general fact:

\begin{proposition}
\label{inv_neigh}
  Let $Y$ be a metric space, and $\phi:X\to Y$ a continuous map.
  For any compact subset $C\subset Y$ and an open neighborhood $V$ of $\phi^{-1}(C)$,
  there exists an open neighborhood $U$ of $C$ such that $\phi^{-1}(U)\subseteq V$.
\end{proposition}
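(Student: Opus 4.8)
The plan is to reduce the statement to the elementary fact that two disjoint compact subsets of a metric space lie at positive distance, making essential use of the compactness of $X$ (Proposition~\ref{compact}).

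First I would put $K=X-V$. Since $V$ is open, $K$ is closed in $X$, and since $X$ is compact, $K$ is compact; hence $\phi(K)$ is a compact subset of $Y$. The key observation is that $\phi(K)\cap C=\emptyset$: a point of this intersection would be $\phi(x)$ for some $x\in K$, and then $x\in\phi^{-1}(C)\subseteq V$, contradicting $x\in K=X-V$. Next, since $\phi(K)$ and $C$ are disjoint compact subsets of the metric space $Y$, I would observe that $\delta:=d(\phi(K),C)>0$ (the distance function is continuous on the compact product $\phi(K)\times C$, so its infimum is attained, and a vanishing infimum would force the two sets to intersect). Then $U:=\{y\in Y\ |\ d(y,C)<\delta\}$ is open and contains $C$, and it satisfies $\phi^{-1}(U)\subseteq V$: indeed, if $x\in\phi^{-1}(U)$ then $d(\phi(x),C)<\delta$, so $\phi(x)\notin\phi(K)$, and therefore $x\notin K$, i.e.\ $x\in V$.

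This argument is essentially routine, so I do not anticipate a substantial obstacle; the only points requiring minor care are the degenerate cases (if $V=X$ one simply takes $U=Y$, and if $C=\emptyset$ one takes $U=\emptyset$) and the fact that $\phi$ need not be injective, so that the final implication must be run in the trivial direction $x\in K\Rightarrow\phi(x)\in\phi(K)$ rather than its converse.
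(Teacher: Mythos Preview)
Your proof is correct. It differs from the paper's argument in execution, though both rely on the compactness of $X$ established in Proposition~\ref{compact}. The paper chooses a nested decreasing sequence of closed neighborhoods $C_n$ of $C$ with $\bigcap_n C_n=C$ (e.g.\ $C_n=\{y:d(y,C)\le 1/n\}$), sets $D_n=\phi^{-1}(C_n)$, and observes that the $D_n-V$ are nested closed subsets of the compact space $X$ with empty intersection, so one of them is already empty; the interior of the corresponding $C_n$ serves as $U$. You instead push the closed complement $K=X-V$ forward to $Y$ and exploit the positive distance between the disjoint compacta $\phi(K)$ and $C$ to write down $U$ directly. Your route is a bit more direct and avoids both the auxiliary sequence and the contradiction; the paper's route has a slightly more general flavor (it would work in any setting where $C$ admits a countable nested neighborhood base), but in the metric context at hand the two are equivalent in strength.
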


\begin{proof}
  Consider a nested sequence of closed sets $C_n$ such that $C$ is contained in
  the interior of every $C_n$, and the intersection of all $C_n$ is $C$.
  Set $D_n=\phi^{-1}(C_n)$.
  These are closed sets containing $D=\phi^{-1}(C)$ in their interiors, and the intersection of all these sets
  is equal to $D$.
  Suppose that all $D_n-V$ are nonempty.
  Then the intersection of these sets is also nonempty, by compactness of $X$.
  This is a contradiction, which shows that $D_n\subset V$ for some $n$.
  It suffices to set $U$ to be the interior of $C_n$.

\end{proof}

\begin{corollary}
  Any elementary path in $X$ is continuous.
\end{corollary}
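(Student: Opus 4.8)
The plan is to derive continuity of an elementary path purely from the continuity of the projections $\phi_{\Ac'}$ (Proposition~\ref{cont}), the compactness of $X$ (Proposition~\ref{compact}), and the fact that $X$ is Hausdorff. Recall that an elementary path has the form $\eta=\phi_{\Ac'}^{-1}\circ\gamma$, where $\Ac'\subset\Ac$ is finite and $\gamma\colon[0,1]\to Y_{\Ac'}$ is a simple path whose image $K=\gamma([0,1])$ is disjoint from the countable set $Z_{\Ac'}$; since $\eta$ is well defined as a map, $K$ is contained in the image of $\phi_{\Ac'}$.

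First I would check that the only nontrivial fibers of $\phi_{\Ac'}$ are the sets $\beta_\alpha(S^1)$ with $\alpha\in\Ac-\Ac'$, each crushed to a single point of $Z_{\Ac'}$. This is immediate from the definition of $\phi_{\Ac'}$: on $X_0$ it equals $\pi_{\Ac'}\circ\Phi_{\Ac'}$, which is injective because the finite regluing $\Phi_{\Ac'}$ is a homeomorphism onto $S^2-\Im\Bc'$ and $\Phi_{\Ac'}(X_0)$ is disjoint from the set collapsed by $\pi_{\Ac'}$; on $\beta_\alpha(S^1)$ with $\alpha\in\Ac'$ it is injective because the identification built into $X$ (of $\beta_\alpha(t_1,t_2)$ with $\beta_\alpha(-t_1,t_2)$) is exactly the relation defining the $\beta$-path of $\Bc'$ corresponding to $\alpha$; and the remaining paths are sent into $Z_{\Ac'}$. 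Hence $\phi_{\Ac'}$ restricts to a bijection of $\phi_{\Ac'}^{-1}(K)$ onto $K$. Since $Y_{\Ac'}$ is a topological sphere (Theorem~\ref{equiv}) it is Hausdorff, so the compact set $K$ is closed and $\phi_{\Ac'}^{-1}(K)$ is a closed, hence compact, subset of $X$.

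The key step is then the classical fact that a continuous bijection from a compact space to a Hausdorff space is a homeomorphism: applied to $\phi_{\Ac'}\colon\phi_{\Ac'}^{-1}(K)\to K$ it shows that the inverse $\phi_{\Ac'}^{-1}\colon K\to\phi_{\Ac'}^{-1}(K)\subset X$ is continuous, and composing with the continuous map $\gamma\colon[0,1]\to K$ gives continuity of $\eta=\phi_{\Ac'}^{-1}\circ\gamma$. I do not expect a genuine obstacle here; the only point needing care is the fiber bookkeeping in the middle step. An alternative that bypasses it uses that $X$, being compact, Hausdorff and second countable, is metrizable: for $t_n\to t$ one checks that any convergent subsequence of $\bigl(\eta(t_n)\bigr)$ has limit $\eta(t)$, since its image under $\phi_{\Ac'}$ converges both to $\phi_{\Ac'}$ of that limit and to $\gamma(t)$, and $\gamma(t)\notin Z_{\Ac'}$ has a unique $\phi_{\Ac'}$-preimage.
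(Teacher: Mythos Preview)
Your argument is correct. The ingredients are exactly those the paper uses---continuity of $\phi_{\Ac'}$ (Proposition~\ref{cont}), compactness of $X$ (Proposition~\ref{compact}), Hausdorffness, and the observation that $\phi_{\Ac'}$ has singleton fibers off $Z_{\Ac'}$---so the two proofs are essentially the same at heart. The only difference is in packaging: the paper introduces an auxiliary lemma (Proposition~\ref{inv_neigh}) saying that for a continuous map $\phi$ out of a compact space, every open neighborhood of a fiber $\phi^{-1}(C)$ contains the preimage of some neighborhood of $C$, and then applies it pointwise to conclude that $\phi_{\Ac'}^{-1}$ is continuous on $\gamma([0,1])$. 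You instead invoke the standard fact that a continuous bijection from a compact space to a Hausdorff space is a homeomorphism, applied to $\phi_{\Ac'}\colon\phi_{\Ac'}^{-1}(K)\to K$. Your route is slightly more economical since it avoids the detour through Proposition~\ref{inv_neigh}, while the paper's lemma is a bit more general (it handles arbitrary compact fibers, not just singletons) and is reused implicitly elsewhere. Your alternative sequential argument at the end is, in fact, even closer in spirit to the paper's approach.
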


\begin{proof}
  Let $\Ac'\subset\Ac$ be a finite subset, and $\gamma:S^1\to Y_{\Ac'}$ a closed simple path
  avoiding the set $Z_{\Ac'}$.
  We need to show that $\phi_{\Ac'}^{-1}\circ\gamma$ is continuous.
  Indeed, by Proposition \ref{inv_neigh}, the map $\phi_{\Ac'}^{-1}$ is continuous on $\gamma(S^1)$.

\end{proof}

\begin{theorem}
  The space $X$ together with the set of elementary paths in $X$ satisfies the Jordan domain axiom,
  the Extension axiom, and the Covering axiom.
  Therefore, $X$ is homeomorphic to the sphere.
\end{theorem}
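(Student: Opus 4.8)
The plan is to check, one at a time, the three axioms of Section~\ref{ss_moore} for the pair consisting of $X$ and the elementary paths in $X$, to exhibit one elementary closed curve, and then to quote Theorem~\ref{moore}. By the results already proved, $X$ is compact (Proposition~\ref{compact}), Hausdorff and second countable; it is also connected, which I would get at once from a principle used repeatedly below: \emph{a continuous closed surjection with connected fibers onto a connected space has connected domain}. Indeed $\phi_\emptyset\colon X\to Y_\emptyset$ is a continuous surjection of a compact space onto a Hausdorff space, hence closed, and each of its fibers is either a single point or one of the arcs $\beta_\alpha(S^1)$, hence connected; as $Y_\emptyset$ is a sphere, $X$ is connected. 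An elementary closed curve is produced exactly as elementary curves are: choose a simple closed curve $\gamma$ in the sphere $Y_\emptyset$ missing the countable set $Z_\emptyset$ (possible by the Baire argument used when $Z_{\Ac'}$ was introduced); then $\phi_\emptyset^{-1}(\gamma)$ is a simple closed curve each of whose segments is elementary.

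The engine for all three verifications is a normal form for elementary objects in $X$. For any finite $\Ac'\subset\Ac$ the map $\phi_{\Ac'}$ is a continuous closed surjection all of whose fibers are connected, and it restricts to a homeomorphism of $\phi_{\Ac'}^{-1}(Y_{\Ac'}-Z_{\Ac'})$ onto $Y_{\Ac'}-Z_{\Ac'}$: injectivity over $Y_{\Ac'}-Z_{\Ac'}$ is a short case analysis from the definition of $\phi_{\Ac'}$, and continuity of the inverse is Proposition~\ref{inv_neigh}. It follows that every elementary curve, elementary closed curve, or elementary quadrilateral in $X$ equals $\phi_{\Ac'}^{-1}$ of, respectively, a simple curve, simple closed curve, or topological quadrilateral in $Y_{\Ac'}$ whose underlying curve avoids $Z_{\Ac'}$: a finite union of elementary arcs can be pushed onto a common $Y_{\Ac'}$ (with $\Ac'$ the union of the finitely many finite index sets involved) by transporting along the projections $\psi_{\Ac_i,\Ac'}$, which carry $Z$-points to $Z$-points and so keep the arcs inside $Y_{\Ac'}-Z_{\Ac'}$, after which $\phi_{\Ac'}$ identifies the curve in $X$ with its image. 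The same bookkeeping shows $\Ac'$ may be replaced by any larger finite $\Ac''$: if $Q=\phi_{\Ac'}^{-1}(U)$ then $Q=\phi_{\Ac''}^{-1}\bigl(\psi_{\Ac',\Ac''}^{-1}(U)\bigr)$, and $\psi_{\Ac',\Ac''}^{-1}(\partial U)$ is again a simple closed curve avoiding $Z_{\Ac''}$.

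Granting this, the Jordan domain axiom and the Extension axiom are routine. Given an elementary closed curve $C=\phi_{\Ac'}^{-1}(\gamma)$, the sphere $Y_{\Ac'}$ is divided by $\gamma$ into two Jordan domains $U,U'$, so $X-C=\phi_{\Ac'}^{-1}(U)\sqcup\phi_{\Ac'}^{-1}(U')$ is a partition into two disjoint open sets; each is connected by the principle above (closed surjection, connected fibers, $U$ and $U'$ connected), and each is by definition an elementary domain. For the Extension axiom, present the given elementary quadrilateral as $\phi_{\Ac'}^{-1}$ of a topological quadrilateral in $Y_{\Ac'}\cong S^2$ and the boundary point $x$ as the unique preimage of a boundary point $\bar x\notin Z_{\Ac'}$; in the sphere a Baire category argument produces a simple vertical or horizontal arc starting at $\bar x$, ending at a non-vertex of the opposite side, with interior in the interior of the quadrilateral, and missing $Z_{\Ac'}$; its $\phi_{\Ac'}$-preimage is the required elementary curve through $x$.

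The Covering axiom is the step I expect to be the main obstacle, and the one place the hypothesis that $\Ac$ is contracted is genuinely used. Let $Q=\phi_{\Ac'}^{-1}(U)$ be an elementary quadrilateral and $\Uc$ an open cover of $\overline Q$. The only fibers of $\phi_{\Ac'}$ over $\overline U$ that are not single points are the reglued arcs $\beta_\alpha(S^1)$, $\alpha\in\Ac-\Ac'$; I would first establish, from contractedness of $\Ac$ together with the locality of finite regluings in the construction of $X$ (equivalently, the contractedness of $\Bc$ asserted in Theorem~\ref{conj_tr}), that the family $\{\beta_\alpha(S^1)\}_{\alpha\in\Ac}$ is contracted in the metrizable space $X$, so that by Proposition~\ref{contracted} only finitely many of these arcs fail to lie inside a single element of $\Uc$. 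Absorbing those finitely many indices into $\Ac'$ (allowed by the enlargement remark above), we may assume that every fiber of $\phi_{\Ac'}$ over $\overline U$ lies in a single element of $\Uc$; then Proposition~\ref{inv_neigh}, applied fiber by fiber, gives an open cover $\Oc$ of $\overline U$ in $Y_{\Ac'}$ such that $\phi_{\Ac'}^{-1}(O)$ lies in a single element of $\Uc$ for each $O\in\Oc$. Applying the Covering axiom in the sphere $Y_{\Ac'}$ (which holds since $Y_{\Ac'}$ is a $2$-sphere, taking all simple curves as elementary), while using Baire category once more to keep every grid curve off $Z_{\Ac'}$, yields a grid subordinate to $\Oc$; its $\phi_{\Ac'}$-preimage is a grid in $Q$ whose cells are precisely the $\phi_{\Ac'}$-preimages of the cells upstairs (again because $\phi_{\Ac'}$ is a closed map with connected fibers), and the closure in $X$ of a cell $\phi_{\Ac'}^{-1}(C)$ is contained in $\phi_{\Ac'}^{-1}(\overline C)\subseteq\phi_{\Ac'}^{-1}(O)$, hence in a single element of $\Uc$. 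Having verified the three axioms and exhibited an elementary closed curve, Theorem~\ref{moore} gives that $X$ is homeomorphic to the sphere.
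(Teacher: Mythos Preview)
Your proof is correct and follows the same overall strategy as the paper---reduce everything to the spheres $Y_{\Ac'}$ via the quotient maps $\phi_{\Ac'}$ and lift---but with a few methodological differences worth noting. For the Jordan domain axiom (and for connectedness of $X$) the paper argues path-connectedness directly: given $x,y$ in an elementary domain $V=\phi_{\Ac'}^{-1}(U)$, it enlarges $\Ac'$ to a finite $\Ac''$ containing the at most two $\alpha$'s whose $\beta_\alpha$-arcs meet $\{x,y\}$, connects $\phi_{\Ac''}(x)$ to $\phi_{\Ac''}(y)$ in $Y_{\Ac''}$ by a path missing $Z_{\Ac''}$, and lifts. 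Your appeal to the general principle ``closed surjection with connected fibers over a connected base has connected total space'' is a clean alternative that avoids the enlargement step. For the Covering axiom the paper says only that it ``can be proved in the same way''; the intended argument is presumably to refine $\Uc$ to a finite cover by basic elementary domains $\phi_{\Ac_i}^{-1}(U_i)$, set $\Ac''=\Ac'\cup\bigcup\Ac_i$, push everything to $Y_{\Ac''}$, build a grid there avoiding $Z_{\Ac''}$, and lift. Your route via contractedness of $\{\beta_\alpha(S^1)\}$ in $X$ also works, but note that your parenthetical appeal to Theorem~\ref{conj_tr} would be circular (its proof comes after this theorem); as you say, the contractedness must be checked directly from the construction, and indeed it drops out of the same compactness-plus-finite-refinement bookkeeping: once finitely many elementary domains $\phi_{\Ac_i}^{-1}(U_i)$ cover $X$ and $\Ac''=\bigcup\Ac_i$, every $\beta_\alpha(S^1)$ with $\alpha\notin\Ac''$ is a single fiber of $\phi_{\Ac''}$ and hence lies in one $U_i$-preimage.
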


\begin{proof}
  To prove the Jordan domain axiom, we just need to verify that any elementary domain is connected.
  Consider an elementary domain $V\subset X$.
  There exists a finite subset $\Ac'\subset\Ac$ and a Jordan domain $U$ in $Y_{\Ac'}$,
  whose boundary avoids $Z_{\Ac'}$, such that $V=\phi_{\Ac'}^{-1}(U)$.
  We will prove that $V$ is path connected.
  Choose any pair of points $x,y\in V$.
  Let $\Ac''$ be a finite subset of $\Ac$ containing $\Ac'$ and all paths $\alpha\in\Ac$
  such that $\beta_\alpha(S^1)$ intersects $\{x,y\}$ (there are at most two such paths).
  The images $\phi_{\Ac''}(x)$ and $\phi_{\Ac''}(y)$ can be connected in $Y_{\Ac''}$ by
  a path avoiding the set $Z_{\Ac''}$.
  The lift of this path to $X$ is an elementary path connecting $x$ with $y$.

  Let $ Q$ be an elementary quadrilateral in $X$, and $x$ a point on the
  boundary of $ Q$ but not at a vertex.
  Consider a finite subset $\Ac'$ of $\Ac$ such that $\phi_{\Ac'}( Q)$
  is bounded by a simple closed path in $Y_{\Ac'}$ avoiding $Z_{\Ac'}$.
  There is a horizontal or vertical curve with respect to the quadrilateral $\phi_{\Ac'}( Q)$
  having $\phi_{\Ac'}(x)$ as an endpoint and avoiding the set $Z_{\Ac'}$.
  The lift of this curve to $X$ is a horizontal or vertical curve with respect to $ Q$
  that passes through $x$.
  Thus the extension axiom holds.

  The covering axiom can be proved in the same way.
\end{proof}

\begin{proof}[Proof of Theorem \ref{conj_tr}]
We just need to define a regluing $\Phi:X_0\to X-\Bc$ (here $X_0$ is considered as a subset of $S^2$).
Set $\Phi(x)=x$ for all $x\in X_0$ (recall that, by definition of $X$, the set $X_0$ is a part of $X$).
When talking about $X_0$ (which sits in both the source and the target spaces), we will
always mean the topology induced from $S^2$ (the source space).
Then $\Phi(X_0)$ is $X_0$ with the topology induced from $X$.

First, we need to show that $\Phi:X_0\to\Phi(X_0)$ is continuous.
In other terms, $U\cap X_0$ is open in $X_0$ for any elementary domain $U$ in $X$.
Indeed, $U=\phi_{\Ac'}^{-1}(V)$ for some finite $\Ac'\subset\Ac$ and open $V\subset Y_{\Ac'}$.
Now the statement follows from the fact that the restriction of $\phi_{\Ac'}$ to $X_0$
(which coincides with $\pi_{\Ac'}\circ\Phi_{\Ac'}$) is continuous.

Finally, consider any $\alpha$-path $\alpha\in\Ac$.
The map $\phi_{\{\alpha\}}\circ\Phi\circ\alpha$ extends to the unit circle as a $\beta$-path.
It follows that so does $\Phi\circ\alpha$.
\end{proof}

\subsection{Ramified coverings}
\label{s_ramcov}
Let $X$ be a topological sphere and $f:X\to X$ a continuous map.
Recall that $f$ is called a {\em ramified covering} at $x\in X$, if there
exist neighborhoods $U$ of $x$ and $V$ of $f(x)$ and homeomorphisms
$\phi:U\to\Delta$ and $\psi:V\to\Delta$ such that $\psi\circ f\circ\phi^{-1}$
coincides with $z\mapsto z^k$ on $\Delta$, where $k$ is some positive integer.
The number $k$ is called the {\em local degree} of $f$ at $x$.
The map $f$ is a ramified covering if it is a ramified covering locally at every point.
The following is a topological criterion for $f$ to be a ramified covering.

\begin{theorem}
\label{ramcov}
  Suppose that $y=f(x)$, and there exist simply connected domains $U\ni x$
  and $V\ni y$ such that $f:U-\{x\}\to V-\{y\}$ is a covering of degree $k$.
  Then $f$ is a ramified covering of degree $k$ at $x$.
\end{theorem}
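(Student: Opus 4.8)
The strategy is to pull everything back to the standard local model $z\mapsto z^k$ on the unit disk $\Delta$ by invoking the classification of connected coverings of the punctured disk. First I would treat the degenerate case $V=X$: then $V-\{y\}$ is homeomorphic to the plane, in particular simply connected, so $k=1$ and $f|_{U-\{x\}}$ is a homeomorphism onto $X-\{y\}$; since $X-\{y\}$ is simply connected while, for a proper simply connected open subset $U\subsetneq X$, the space $U-\{x\}$ has infinite cyclic fundamental group, we must have $U=X$. Then $f\colon X\to X$ is a continuous bijection of a compact Hausdorff space, hence a homeomorphism, and a homeomorphism is a ramified covering of local degree $1$ everywhere. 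So from now on I assume $V\ne X$. A proper simply connected open subset of a topological sphere is homeomorphic to $\Delta$ (transport a complex structure to $X$ and apply the Riemann mapping theorem, or use the classification of non-compact simply connected surfaces), so $V\cong\Delta$; the same $\pi_1$-comparison as above rules out $U=X$ (a connected covering of degree $k<\infty$ cannot have simply connected total space over a base with $\pi_1\cong\mathbb Z$), so $U\cong\Delta$ as well. Fix a homeomorphism $\psi\colon V\to\Delta$ with $\psi(y)=0$.

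Write $\Delta^{\times}=\Delta\setminus\{0\}$. Now $\psi\circ f\colon U-\{x\}\to\Delta^{\times}$ is a covering of degree $k$ with connected total space (as $U\cong\Delta$, the set $U-\{x\}$ is a once-punctured disk, hence connected). Since $\pi_1(\Delta^{\times})\cong\mathbb Z$ is abelian, there is, up to isomorphism of coverings, a unique connected covering of $\Delta^{\times}$ of degree $k$, and the power map $P_k\colon\Delta^{\times}\to\Delta^{\times}$, $w\mapsto w^k$, is one such covering. Hence there is a homeomorphism $\phi_0\colon U-\{x\}\to\Delta^{\times}$ with $P_k\circ\phi_0=\psi\circ f$, that is, $\psi(f(u))=\phi_0(u)^k$ for every $u\in U-\{x\}$.

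It remains to extend $\phi_0$ across $x$ and to identify the model. As $u\to x$ in $U$, continuity of $f$ gives $f(u)\to y$, hence $\psi(f(u))\to 0$, hence $\phi_0(u)^k\to 0$, hence $\phi_0(u)\to 0$; therefore $\phi(x):=0$ defines a continuous extension $\phi\colon U\to\Delta$ of $\phi_0$. This $\phi$ is a continuous bijection (the only point mapped to $0$ is $x$), and by invariance of domain it is an open map, so it is a homeomorphism. Finally $\psi\circ f\circ\phi^{-1}$ equals $w\mapsto w^k$ on $\Delta^{\times}$ and sends $0$ to $0$, so it equals $w\mapsto w^k$ on all of $\Delta$; together with the homeomorphisms $\phi\colon U\to\Delta$ and $\psi\colon V\to\Delta$ this is exactly the assertion that $f$ is a ramified covering of local degree $k$ at $x$.

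I expect the only real subtlety to be organizational: getting $U,V\cong\Delta$ (which rests on the standard fact about proper simply connected open subsets of the $2$-sphere), the uniqueness of the connected degree-$k$ covering of $\Delta^{\times}$, and the single use of invariance of domain to promote $\phi$ to a homeomorphism. All of these are classical, so once they are lined up the proof is a short assembly, with no delicate estimate or construction involved.
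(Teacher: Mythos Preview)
Your proof is correct and follows essentially the same route as the paper's: both arguments identify the punctured neighborhoods with punctured disks, use the covering-space structure over $\Delta^\times$ to produce coordinates in which $f$ becomes $z\mapsto z^k$, and then extend continuously across the puncture. The only cosmetic difference is packaging: the paper lifts explicitly to the universal cover $\mathbb{H}\to\mathbb{H}/\mathbb{Z}\cong\Delta^\times$, obtains a homeomorphism $F:\mathbb{H}\to\mathbb{H}$ with $F(z+1)=F(z)+k$, and descends via $z\mapsto\exp(2\pi iF(z)/k)$, whereas you invoke the classification of connected degree-$k$ covers of $\Delta^\times$ as a black box; your treatment is also a bit more careful about the edge case $V=X$ and about why the extended $\phi$ is a homeomorphism (invariance of domain), points the paper leaves implicit.
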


\begin{proof}
  Both $U-\{x\}$ and $V-\{y\}$ can be homeomorphically identified with the
  quotient of the upper half-plane $\H=\{\Im z>0\}$ by the translation $z\mapsto z+1$.
  Moreover, one can choose the homeomorphisms with the following property: as a point in $\H$ tends to infinity,
  the corresponding point in $U-\{x\}$ (respectively, in $V-\{y\}$) tends to $x$ (respectively, to $y$).
  The map $f$ restricted to $U-\{x\}$ lifts to a homeomorphism $F:\H\to\H$ such that $F(z+1)=F(z)+k$.
  Consider the maps $\Phi(z)=\exp(2\pi iF(z)/k)$ and $\Psi(z)=\exp(2\pi iz)$.
  These maps descend to homeomorphisms $\phi:U-\{x\}\to\Delta-\{0\}$ and
  $\psi:V-\{y\}\to\Delta-\{0\}$, respectively.
  Moreover, we have $\psi\circ f=\phi^k$.
  Clearly, $\phi$ and $\psi$ extend continuously to $x$ and $y$, respectively, and
  $\phi(x)=\psi(y)=0$ so that the equality $\psi\circ f=\phi^k$ still holds.
\end{proof}

Recall that Theorem \ref{reg_rc} states a simple condition that guarantees that a
topological regluing of a ramified covering is also a ramified covering.
We are now ready to prove this theorem.

\begin{proof}[Proof of Theorem \ref{reg_rc}]
  Consider a topological ramified covering $f:S^2\to S^2$ and a contracted strongly $f$-stable
  set $\Ac$ of disjoint $\alpha$-paths.
  Let $\Phi:S^2-\Im\Ac\to S^2-\Im\Bc$ be a regluing of $\Ac$ into a contracted set $\Bc$ of $\beta$-paths.
  The map $g=\Phi\circ f\circ\Phi^{-1}$ extends to a continuous map from $S^2$ to $S^2$.
  When talking about $g$, we mean this continuous map.
  We want to prove that actually, $g$ is a ramified covering.

  Define $C_g$ as the set consisting of points $\Phi(c)$, where $c\not\in\Im\Ac$ is a critical point of $f$,
  and points $\beta(1,0)$, where $\beta\in\Bc$ is the $\beta$-path corresponding to some $\alpha$-path
  $\alpha\in\Ac$, and $\alpha(0,1)$ is a critical point of $f$.
  Clearly, $C_g$ is finite (and contains as many points as there are critical points of $f$).
  By Proposition \ref{ramcov}, it suffices to prove that for every $y\not\in C_g$ there exists a neighborhood
  of $y$, on which $g$ is injective.

  First, assume that $y\not\in\Im\Bc$.
  Then $y=\Phi(x)$ for some point $x\not\in\Im\Ac$.
  Consider a small neighborhood $V$ of $f(x)$ that does not contain critical values of $f$.
  Let $U$ be a Jordan neighborhood of $x$, whose boundary is disjoint from $\Im\Ac$ and which is contained in $f^{-1}(V)$.
  Clearly, $\hat\Phi(U)$ is a Jordan neighborhood of $y$, and $g$ is injective on this neighborhood.
  Here $\hat\Phi$ is defined as in the proof of Proposition \ref{bs}.

  Next, assume that $y\in\beta(S^1)$, where $\beta\in\Bc$ is the $\beta$-path corresponding to some
  $\alpha$-path $\alpha\in\Ac$, and $\alpha(S^1)$ contains no critical points of $f$.
  Consider a small neighborhood $V$ of $f(\alpha(S^1))$ containing no critical values of $f$.
  Let $U$ be a Jordan neighborhood of $\alpha(S^1)$, whose boundary is disjoint from $\Im\Ac$ and
  which is contained in $f^{-1}(V)$.
  Clearly, $\hat\Phi(U)$ is a Jordan neighborhood of $y$, and $g$ is injective on this neighborhood.

  Finally, assume that $y=\beta(t_1,t_2)$, where $\beta\in\Bc$ is the $\beta$-path corresponding to
  some $\alpha$-path $\alpha\in\Ac$, and $\alpha(0,1)$ is a critical point of $f$.
  Then we necessarily have $t_2\ne 0$.
  Choose a small Jordan neighborhood $V$ of $f(\alpha(t_1,t_2))$ that does not intersect $\Im\Ac$
  and that intersects $f\circ\alpha(S^1)$ in at most two points.
  Then a component of $\hat\Phi(f^{-1}(V))$ that contains $y$ is a Jordan neighborhood of $y$, on
  which $g$ is injective, as can be easily seen from the definition of a regluing
  (note, however, that a component of $\hat\Phi(f^{-1}(V))$ may be ``glued by $\Phi$'' from parts of
  different components of $f^{-1}(V)$).
\end{proof}

\noindent------------------------\\
\noindent Vladlen Timorin\\
School of Engineering and Science
Mathematical Sciences\\
Jacobs University Bremen \\
Germany\\  (v.timorin@jacobs-university.de)\\

\end{document}